\newenvironment{breakablealgorithm}
{
		\begin{center}
			\refstepcounter{algorithm}
			\hrule height.8pt depth0pt \kern2pt
			\renewcommand{\caption}[2][\relax]{
				{\raggedright\textbf{\ALG@name~\thealgorithm} ##2\par}%
				\ifx\relax##1\relax 
				\addcontentsline{loa}{algorithm}{\protect\numberline{\thealgorithm}##2}%
				\else 
				\addcontentsline{loa}{algorithm}{\protect\numberline{\thealgorithm}##1}%
				\fi
				\kern2pt\hrule\kern2pt
			}
		}{
		\kern2pt\hrule\relax
	\end{center}
}
\newtheorem{theorem}{Theorem}
\newtheorem{lemma}{Lemma}
\newtheorem{claim}[]{Claim}
\newtheorem{conjecture}{Conjecture}
\begin{document}
	
	
	\title{3-Coloring $P_t$-Free Graphs With Only One Prescribed Induced Odd Cycle Length}
	\author{Yidong Zhou\footnote{College of Computer Science, Nankai University, Tianjin 300350, China} , Mingxian Zhong\footnote{Lehman College and the Graduate Center, CUNY, Bronx, NY 10468, USA}   and Shenwei Huang\footnote{School of Mathematical Sciences and LPMC, Nankai University, Tianjin 300071, China. The corresponding author: shenweihuang@nankai.edu.cn. Sponsored by CCF-Huawei Populus Grove Fund.}}
	
	\maketitle
	\begin{abstract}
		A graph is $P_t$-free if it contains no induced subgraph isomorphic to a $t$-vertex path. 
		A graph is not bipartite if and only if it contains an induced subgraph isomorphic to a $k$-vertex cycle, where $k$ is odd. 
		We focus on the 3-coloring problem for $P_t$-free graphs that have only one prescribed induced odd cycle length. 
		For any integer $t$ and any odd integer $k$, let $\mathcal{G}_{t,k}$ be the class of graphs that are $P_{t}$-free and all their induced odd cycles must be $C_k$. 
		In this paper, we present a polynomial-time algorithm that solves the 3-coloring problem for any graph in $\mathcal{G}_{10,7}$.

	\end{abstract}
	
	
	\textbf{Keywords}: Coloring; Polynomial-time algorithm; $P_t$-free; Non-Bipartite graph
	\section{Introduction}
	All graphs in this paper are finite and simple. 
	For general graph theory notations, we follow \cite{BA08}. 
	A {\em $k$-coloring} of a graph $G=(V(G),E(G))$ is a function $f:V\rightarrow \{1,2,\ldots, k\}$ such that $f(u)\neq f(v)$ whenever $uv\in E(G)$. 
	We call $G$ {\em $k$-colorable} if $G$ admits a $k$-coloring. 
	A well-known problem surrounding graph colorings is the {\em $k$-coloring problem}, which determines whether a given graph $G$ is $k$-colorable or not, and obtains a $k$-coloring of $G$ if it is $k$-colorable.         
	For $k\leq 2$, the problem is solvable in linear time. 
	For $k\geq 3$, the problem is one of Karp's 21 NP-complete problems \cite{KR72}.
	
	Because of the notorious hardness of the $k$-coloring problem, efforts were made to understand the problem on restricted graph classes. 
	Some of the most prominent classes are the classes of $H$-free graphs, which is defined as the class of all graphs not containing $H$ as an induced subgraph. 
	Kami\'{n}ski and Lozin \cite{KM07} and independently Kr\'{a}l, Kratochv\'{i}l, Tuza, and Woeginger \cite{KD01} proved that for any fixed $k,g\geq 3$, the $k$-coloring problem is NP-complete for the class of graphs containing no cycle of length less than $g$.
	As a consequence, if the graph $H$ contains a cycle, then $k$-coloring problem is NP-complete for $k\geq 3$ for the class of $H$-free graphs. 
	Moreover, if $H$ is a forest with a vertex of degree at least 3, then $k$-colouring problem is NP-complete for $H$-free graphs and $k\geq 3$ \cite{HI81,LD83}. 
	Combined, these two results only leave open the complexity of the $k$-coloring problem for the class of $H$-free graphs where $H$ is a disjoint union of paths. 
	See for \cite{HP13} as a survey by Hell and Huang on the complexity of coloring graphs without paths and cycles of certain lengths. 
	We denote a path and a cycle on $t$ vertices by $P_t$ and $C_t$, respectively. 
	
	Huang \cite{HS13} proved that 4-coloring problem for $P_7$-free graphs is NP-complete, and that 5-coloring problem for $P_6$-free graphs is also NP-complete. 
	On the other hand, Ho\`{a}ng, Kami\'{n}ski, Lozin, Sawada and Shu \cite{HC10} prove that $k$-coloring problem for $P_5$-free graphs can be solved in polynomial time for any fixed $k\geq 5$, and Chudnovsky, Spirkl and Zhong \cite{CM24,CM241} showed that $4$-coloring problem for $P_6$-free graphs can be also solved in polynomial time. 
	Combined with these results, we have a complete classification of the complexity of $k$-colouring $P_t$-free graphs for any fixed $k\geq 4$. 
	For $k=3$, the strongest known results related to our work are due to Bonomo, Chudnovsky, Maceli, Schaudt, Stein and Zhong \cite{BF18} who prove that 3-coloring problem for $P_7$-free graphs can be solved in polynomial time.
	
	On the other hand, a graph $G$ is bipartite if and only if $G$ has no odd cycles and $G$ is 2-colorable if and only if $G$ is bipartite. 
	Therefore, a natural question is to ask what  the chromatic numbers are if we lift the restriction on cycles. 
	Bollob\'{a}s and Erd\H{o}s \cite{EP90} conjectured that $\chi(G)\leq 2r+2$ for every graph $G$ which contains at most $r$ different odd cycle lengths. 
	This is proved by Gy\'{a}rf\'{a}s \cite{GA92} and is best possible considering $G=K_{2r+2}$. 
	Mih\'{o}k and Schiermeyer \cite{MP04} showed $\chi(G)\leq 2s+3$ for every graph $G$ which contains at most $s$ different even cycle lengths.   
	When it comes to induced cycle length, a few results are given. 
	Randerath and Schiermeyer \cite{RB01} gave the chromatic number of graphs with two prescribed induced cycle lengths. 
	Moreover, they presented a polynomial-time algorithm to 3-color all triangle-free graphs of this kind. 
	Motivated by this line of the research, we focus on the 3-coloring problem for graphs whose induced odd cycles must have the same length.               
	For any integer $t$ and any odd integer $k$, let $\mathcal{G}_{t,k}$ be the class of graphs that are $P_{t}$-free and all its induced odd cycles must be $C_k$. 
	
	\begin{conjecture}
		For any fixed $t,k$, 3-coloring problem for $\mathcal{G}_{t,k}$ can be solved in polynomial time. 
	\end{conjecture}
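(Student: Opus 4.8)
\medskip

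We do not know how to prove the conjecture in full; what we can do, and what the rest of the paper carries out, is the case $t=10$, $k=7$. The plan is the following. Throughout we work with \emph{list-$3$-coloring}: each vertex $v$ carries a list $L(v)\subseteq\{1,2,3\}$ and we ask for a proper coloring $c$ with $c(v)\in L(v)$ everywhere. This generalizes $3$-coloring, is closed under vertex deletion, and becomes solvable in polynomial time (by a reduction to \textsc{2-Sat}) once every list has size at most $2$. So it is enough to build, in polynomial time, a branching tree with polynomially many leaves in which, along each root-to-leaf branch, we guess the colors of a bounded-size set of vertices and then \emph{propagate} the consequences (shrink $L(u)$ whenever a neighbour of $u$ has become monochromatic, detect a clash, and so on), so that at every leaf either infeasibility is exposed or all lists have size $\le 2$.

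First comes an easy split. If $G\in\mathcal G_{10,7}$ is not bipartite then it has a shortest odd cycle, which is chordless, hence an induced odd cycle; since $G$ is $P_{10}$-free this cycle has length in $\{3,5,7,9\}$, and since $G\in\mathcal G_{10,7}$ it has length exactly $7$. Thus either $G$ is bipartite -- and then trivially $3$-colorable -- or $G$ contains an induced $C_7$; fix one, say $C=v_1v_2\cdots v_7$, and restrict to the component containing it (the others are handled the same way, or are bipartite).

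The core is a structural analysis of $G$ around $C$. I would first classify each $x\in V(G)\setminus V(C)$ by its \emph{trace} $N(x)\cap V(C)$: forbidding $C_3,C_5,C_9$ and $P_{10}$ reduces the admissible traces to a constant-size list and also kills most adjacency patterns between two vertices of prescribed trace-types. Next I would bound the reach of $G$ away from $C$: a vertex at BFS-distance $d$ from $C$ yields, since shortest paths are induced, an induced path that, extended along a long induced subpath of $C$ running away from its few attachment points, is nearly a $P_{10}$, forcing $d$ to be small; a finer version of this computation should sort every vertex into one of finitely many roles read off from the local picture near $C$. From this I expect to pull out a bounded-size set $D$ -- namely $C$ together with a constant number of vertices dominating the ``wide'' part of $G$ -- such that after guessing a proper $3$-coloring of $G[D]$ (only $3^{|D|}=O(1)$ choices) and propagating, every surviving list has size $\le 2$; the residue is then fed to \textsc{2-Sat}.

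The step I expect to be hardest is exactly this last one. After propagation there may remain ``free'' vertices still carrying $\{1,2,3\}$, typically those touching $C$ very loosely -- one vertex of $C$, or nothing at all directly. The plan for them is to argue that the subgraph they span is structurally tame: bipartite, or $P_{t'}$-free for a $t'$ small enough that an easier algorithm applies (for instance the $P_7$-free $3$-coloring algorithm of~\cite{BF18}), or a disjoint union of pieces each meeting $D$ through a bounded interface, so that list-$3$-coloring each piece is itself a bounded branching. Pushing this through is where the specific values $t=10$, $k=7$ are used and where nearly all of the casework lives; a secondary worry is keeping the branching polynomial, i.e.\ ensuring each branch is forced by a \emph{bounded} local obstruction rather than by guessing an unbounded set outright, which in turn constrains how $D$ may be assembled.
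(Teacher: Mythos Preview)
This is a conjecture; the paper, like you, addresses only $t=10$, $k=7$. Your outline matches the paper at a coarse level --- fix an induced $C_7$, classify vertices by trace on it, bound the distance from $C$, branch on a small set to drive lists to size $\le 2$ --- but there is a real gap at your ``hardest step''.

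The stubborn vertices after propagating from $V(C)$ are the isolated vertices $x$ of $G\setminus N[C]$ whose neighbours in $N(C)$ straddle two opposite bands $A_{i-2}\cup B_{i-1}$ and $A_{i+2}\cup B_{i+1}$ (the paper's sets $X_i$). None of your fallbacks apply: $G[X_i]$ is edgeless, so ``bipartite'' or ``$P_{t'}$-free'' says nothing useful since the difficulty is coloring the \emph{neighbours} of $X_i$; there is no bounded set dominating $X_i$; and each $x\in X_i$ can have unbounded interface to $N(C)$. The paper's key device is a recoloring dichotomy: starting from any $3$-coloring, one explicitly recolors a carefully designed cascade of sets to reach a $3$-coloring in which, for every $x\in X_i$, each of its two neighbour-bands is monochromatic --- \emph{unless} the coloring already exhibits one of a finite list of small labelled configurations (``type $A$'' or ``type $B$''), each of which is itself a bounded branching point. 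In the monochromatic case one deletes $X_i$, records for each deleted $x$ two monochromatic-set constraints on its neighbour-bands, and solves the residue by a mild extension of \textsc{2-Sat} that handles such constraints. This dichotomy is the technical heart of the argument; it leans throughout on a preprocessing step that removes comparable pairs (non-adjacent $u,v$ with $N(u)\subseteq N(v)$), and the recoloring step is genuinely \emph{not} a list-coloring manoeuvre --- the paper notes in its conclusion that the method does not extend to list $3$-coloring. Your sketch does not anticipate this mechanism, and without it there is no visible way to finish.
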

	Note that $\mathcal{G}_{t,3}$ is a subclass of odd-hole-free graphs. 
	By The Strong Perfect Theorem \cite{CM06}, for any odd-hole-free $G$ without $K_4$, either $G$ contains an odd anti-hole with length 7 or $G$ is 3-colorable. 
	Thus, 3-coloring problem for $\mathcal{G}_{t,3}$ can be solved in polynomial time. 
	Due to the work of Bonomo, Chudnovsky, Maceli, Schaudt, Stein and Zhong \cite{BF18}, 3-coloring problem for $\mathcal{G}_{t,k}$ can be solved in polynomial time for any $t\leq 7$. 
	For $t=8$, Chudnovsky and Stacho \cite{CM18} proved that all graphs in $\mathcal{G}_{8,7}$ are 3-colorable. 
	By a similar analysis, one can prove that $\mathcal{G}_{t,t-1}$ are 3-colorable for every even integer $t$. 
	A result given by Alberto Rojas Anr\'{i}quez and Maya Stein \cite{AA23} showed that 3-coloring problem for $\mathcal{G}_{9,7}$ can be solved in $O(|V(G)|+|E(G)|)$ time.
	In this paper, we prove that  
	\begin{theorem}[Main Theorem]\label{Main}
		Given a graph $G\in \mathcal{G}_{10,7}$, it can be decided in $O(|V(G)|^{21}(|V(G)|+|E(G)|))$ time whether $G$ admits a $3$-coloring and find one if it exists.  
	\end{theorem}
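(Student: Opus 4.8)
\emph{Proof sketch.} Write $n=|V(G)|$ and $m=|E(G)|$. We first record that every $G\in\mathcal{G}_{10,7}$ is triangle-free and has no induced $C_5$, $C_9$, or $C_\ell$ with $\ell\ge 11$: a triangle, an induced $C_5$, or an induced $C_9$ would be an induced odd cycle of length $\ne 7$, and an induced $C_\ell$ with $\ell\ge 11$ contains an induced $P_{10}$. The plan is to reduce the $3$-colouring problem to \textsc{List-$3$-Colouring} and to solve the latter by bounded exhaustive guessing followed by a reduction to $2$-SAT. We treat each connected component of $G$ on its own; a bipartite component is $2$-coloured directly, while a non-bipartite component contains an odd cycle, a shortest one of which is chordless, hence an induced odd cycle, hence an induced $C_7$; such a cycle is found by brute force in $O(n^7)$ time.

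So consider an induced cycle $C=v_0v_1\cdots v_6v_0$ (the algorithm will iterate over all $O(n^7)$ of them). Give every vertex the list $\{1,2,3\}$ and, for each of the constantly many -- in fact $2^{7}-2=126$ -- proper $3$-colourings of $C$, replace the lists on $C$ by the corresponding singletons and run the standard propagation: while some vertex has a singleton list, remove that colour from the lists of its neighbours, discarding the branch as soon as some list becomes empty. Call a vertex \emph{free} on a branch if its list is still $\{1,2,3\}$ when propagation halts. Then $G$ is $3$-colourable iff some branch yields a feasible list colouring, and the task reduces to colouring the free vertices.

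The structural heart of the argument is a description of the free vertices sharp enough to pin them down by a guess of bounded size. First, triangle-freeness and $C_5$-freeness force each vertex off $C$ to have at most two neighbours on $C$, and two such neighbours to be of the form $v_i,v_{i+2}$. Next, $P_{10}$-freeness forces every vertex to lie within distance $4$ of $C$. Finally, a more detailed analysis -- of how the attachment types $\emptyset$, $\{v_i\}$, $\{v_i,v_{i+2}\}$ interact, of the adjacencies between vertices of different types, and of the induced paths leaving $C$, each time using that the only induced odd cycle is a $C_7$ and that there is no induced $P_{10}$ -- shows that the graph induced by the free vertices can be dominated by a set of at most $k$ vertices, for an absolute constant $k$, that may be located among the $\binom{n}{k}3^{k}=O(n^{k})$ colour-labelled candidate sets. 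The algorithm therefore additionally iterates over all such candidates $D$: for each it sets the guessed colours on $D$, propagates once more, and -- by the defining property of $D$ -- obtains an instance in which every list has size at most $2$.

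A \textsc{List-$3$-Colouring} instance with all lists of size at most $2$ is equivalent to a $2$-SAT instance (a Boolean variable for each vertex of list-size $2$, one or two $2$-clauses per edge forbidding a monochromatic pair, a unit clause for each singleton list), solvable in $O(n+m)$ time, while an empty list makes the branch infeasible. Summing over components, over all induced copies of $C_7$ (factor $O(n^7)$), the $126$ colourings of each, the $O(n^{k})$ colour-labelled sets $D$, and one $2$-SAT call per leaf yields running time $O(n^{7}\cdot n^{k}\cdot(n+m))$; the structural analysis shows $k=14$ may be taken, giving the stated $O(n^{21}(n+m))$. The principal obstacle is exactly this structural step: proving that the free vertices are always dominated by a constant-size set from a polynomially bounded family demands a lengthy case analysis, and the delicate configurations are the degenerate ones in which $C$ together with the already-resolved vertices leaves a large part of the graph unconstrained -- these must be handled by re-running the analysis relative to a better-placed induced $C_7$, by pushing propagation deeper, or by reducing to a smaller instance treatable by the methods already developed for $\mathcal{G}_{8,7}$ and $\mathcal{G}_{9,7}$.
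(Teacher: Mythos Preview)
Your high-level framework matches the paper's: fix an induced $C_7$, branch on its colouring, propagate, and aim to reduce every remaining list to size at most~$2$ so that $2$-SAT finishes the job. The genuine gap is your central structural assertion, that the free vertices are always \emph{dominated} by a set of constant size $k$. You correctly flag this as ``the principal obstacle'', but you do not prove it, and the paper's argument strongly suggests that a pure dominating-set claim of this form does not go through for the isolated free vertices.

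Concretely, the paper first \emph{cleans} $G$ so that every component of $G\setminus N[C]$ is an isolated vertex or a complete bipartite graph, and every vertex is within distance~$2$ of $C$ (your distance-$4$ bound is too weak to make the later steps work). For the non-trivial components a bounded dominating set does exist, as you hoped. But for the isolated vertices $x\in X_i$ (those with neighbours on both ``sides'' $A_{i-2}\cup B_{i-1}$ and $A_{i+2}\cup B_{i+1}$), the paper does \emph{not} dominate them. Instead it proves a dichotomy: either some $3$-colouring is of one of finitely many ``type~$A$/type~$B$'' shapes, in which case guessing $O(1)$ further vertices (up to~$8$) forces $|L(x)|\le 2$ for all $x\in X_i$; or else there is a $3$-colouring satisfying a \emph{mono condition}, meaning every free $x\in X_i$ has all its left-side neighbours one colour and all its right-side neighbours one colour. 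In the latter branch the free $x$'s are \emph{deleted} and replaced by monochromaticity constraints on their neighbour sets, and the resulting instance is solved by a generalised $2$-SAT that additionally enforces prescribed monochromatic sets.

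This mono-condition step, together with the recolouring argument that produces such a colouring from an arbitrary one, is the paper's main technical contribution and is exactly what your sketch is missing. Your fallback suggestions---re-running from a better $C_7$, pushing propagation, or appealing to $\mathcal{G}_{8,7}$/$\mathcal{G}_{9,7}$ results---do not supply it: the paper does pass to an auxiliary $C_7$, but only to set up the mono-condition recolouring, not to obtain a dominating set. Without this idea (and the accompanying generalised $2$-SAT), your pipeline stalls at the isolated-vertex case, and the unexplained choice $k=14$ has no visible source.
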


	We call non-adjacent vertices $u,v$ {\em comparable}, if $N(u)\subseteq N(v)$ or $N(v)\subseteq N(u)$, and call $G$ has no {\em comparable pair} if $G$ has no such $u$ and $v$. It is safe to assume that $G$ has no comparable pair by Lemma~\ref{comparable}. By a structural analysis of an induced odd cycle $C$ and its attachments, we first partition $V(G)$ into different sets based on their relations with $C$, and prove that all these vertices lie at most  distance 2 from $C$. 
	We  could then reduce $G$ to a subgraph $G'$ such that each component in $V(G')\setminus N[C]$ is either an isolated vertex or a complete bipartite graph. 
	This is done in Section $3$ by applying this step for every induced $C_7$ of $G$.

	For the non-trivial components, we can partition them into multiple cases depending on their size and neighbors in $N(C)$. 
	For each case, we identify a finite number of vertices that dominate all components in this case, except for some vertices that can be safely reduced. 	
	This is done in Section 4.

	The main idea to handle isolated vertices lies in Lemma~\ref{wonderful}, which essentially says that if $G$ is 3-colorable, then either it has  some ``good" colorings or some coloring satisfies a ``mono" condition. 
	In the former case, we can find a bounded set of vertices which dominates the isolated vertices. 
	In the latter case, we can reduce the problem to a variant of the $2$-SAT problem. To prove this Lemma we start with an arbitrary $3$-coloring of $G$ that is not ``good" and obtain a $3$-coloring which satisfies the ``mono" condition. 
	This is one of our key technical contributions and is done in Section 5.  
	
	To prove Theorem~\ref{Main}, we divide the possible $3$-colorings of $C_7$ into three types. 
	In Section 6, we start with guessing the first type of coloring for some $C_7$ and prove the main theorem under such assumption by combining results proved in previous sections. 
	Then we deal with the case where there is no first type coloring but some $C_7$ are colored as the second type and lastly, we consider a relatively easier case where no $C_7$ is colored as the first or the second type.

	\section{Notation and Preliminaries}

	We start by establishing some notation and preliminary results. 
	Let $G$ be a graph. 
	For any $x\in V(G)$, let $N(x)$ denote the set of all neighbors of $x$ in $G$ and $N[x]:=N(x)\cup \{x\}$.
	For $A,B\subseteq V(G)$, we say that $A$ is {\em complete} to $B$ if every vertex in $A$ is adjacent to every vertex in $B$.
	Let $k$ be a positive integer. Set $[k]:=\{1,2,\ldots,k\}$.
	
	If $u$ and $v$ are comparable, say $N(u)\subseteq N(v)$, then $G$ is $3$-colorable if and only if $G\setminus u$ is colorable since we can extend the color of $v$ on $u$. 
	Therefore, we can obtain the following folklore result.                             
	\begin{lemma}\label{comparable}
		Let $G$ be a graph.  
		There is an algorithm to obtain an induced subgraph $G'$ of $G$ in $O(|V(G)|+|E(G)|)$ time such that $G'$ has no comparable pair and $G$ is 3-colorable if and only if $G'$ is 3-colorable. 
		Moreover, any 3-coloring of $G'$ can be extended to a 3-coloring of $G$ in $O(|V(G)|+|E(G)|)$ time. 
	\end{lemma}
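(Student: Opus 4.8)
The plan is to prove Lemma~\ref{comparable} by iteratively eliminating comparable pairs, using the folklore observation already sketched in the paragraph preceding the statement. First I would fix a graph $G$ and observe that if $u,v$ is a comparable pair with $N(u)\subseteq N(v)$, then $G$ is $3$-colorable if and only if $G\setminus u$ is: one direction is trivial since $G\setminus u$ is an induced subgraph, and for the converse, given a $3$-coloring $f$ of $G\setminus u$, every neighbor of $u$ is a neighbor of $v$, so setting $f(u):=f(v)$ yields a proper $3$-coloring of $G$ (note $uv\notin E(G)$ by the definition of comparable). Thus deleting $u$ is safe, and moreover the extension step costs only $O(|V(G)|+|E(G)|)$ — or in fact $O(\deg(u))$ — since we only need to copy one color.

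Next I would describe the algorithm: repeatedly search for a comparable pair and, if one is found, delete the vertex with the smaller neighborhood; stop when no comparable pair remains, outputting the resulting induced subgraph $G'$. Correctness of the equivalence ``$G$ is $3$-colorable iff $G'$ is'' follows by induction on the number of deletions, each deletion being safe by the previous paragraph. The reverse extension is handled by recording, for each deleted vertex $u$, the vertex $v$ it was comparable to at the time of deletion (in the then-current graph); to extend a $3$-coloring of $G'$ back to $G$, process the deleted vertices in reverse order of deletion and assign each $u$ the current color of its recorded partner $v$. Since $v$ survives at least as long as $u$ in the deletion process (because $u$ was removed, not $v$), $v$ is present when we reassign $u$, and at that moment $N(u)\subseteq N(v)$ still holds in the current graph, so the coloring stays proper.

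The main obstacle is the running time: a naive implementation that recomputes all pairwise neighborhood containments after every deletion is far too slow, so I would need to argue the $O(|V(G)|+|E(G)|)$ bound carefully. The key points are: (i) there are at most $|V(G)|$ deletions; (ii) testing and finding a comparable pair, together with all the bookkeeping, can be organized so that the total work is linear — for instance, by using a partition-refinement / neighborhood-hashing scheme, or by invoking a known linear-time routine for iteratively removing comparable (``dominated'') vertices, analogous to modular-decomposition-style preprocessing. I would cite or sketch such a procedure rather than grind through the data-structure details, emphasizing that each edge and vertex is touched a constant number of times amortized across all deletions. Finally, I would note that $G'$ has no comparable pair by the stopping condition, and that $G'$ is genuinely an induced subgraph of $G$ since every step deletes a vertex, completing the proof.
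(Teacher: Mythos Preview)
Your proposal is correct and matches the paper's approach: the paper treats Lemma~\ref{comparable} as folklore, giving only the one-line justification in the paragraph preceding the statement (delete $u$ when $N(u)\subseteq N(v)$ and copy the color of $v$), with no further proof and in particular no argument for the linear running time. Your sketch is in fact more detailed than what the paper provides, and your honest flagging of the $O(|V(G)|+|E(G)|)$ bound as the non-obvious part is appropriate---the paper simply asserts it.
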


	In the context of 3-coloring, we call a family $\mathcal{L}$ of lists $L:=\{L(v): v\in V(G)\}$, where $L(v)\subseteq \{1,2,3\}$ for each $v\in V(G)$, a {\em palette} of the graph $G$. 
	A palette $L'$ is a {\em subpalette} of $L$ if $L'(v)\subseteq L(v)$ for each $v\in V(G)$. 
	Given a graph and a palette $L$, we say that a 3-coloring $c$ of $G$ is a {\em coloring} of $(G,L)$ if $c(v)\in L(v)$ for all $v\in V(G)$. 
	We say that $(G,L)$ is {\em colorable} if there exists a coloring of $(G,L)$. 
	Let $L_0$ be the palette such that $L_0(v)=\{1,2,3\}$ for all $v\in V(G)$. 
	Then $G$ is 3-colorable if and only if $(G,L_0)$ is colorable. 
	We denote by $(G,\mathcal{L})$ a graph and a collection $\mathcal{L}$ of palettes of $G$. 
	We say that $(G,\mathcal{L})$ is {\em colorable} if $(G,L)$ is {\em colorable} for some $L\in \mathcal{L}$. 
	
	An {\em update} of the list of a vertex $v$ {\em from} $w$ means we delete an entry from the list of $v$ that appears as the unique entry of the list of a neighbor $w$ of $v$. 
	Clearly, such an update does not change the colorability of the graph. 
	If a palette $L'$ is obtained from a palette $L$ by updating repeatedly until for every vertex $v$, if $v$ has a neighbor $u$ with $L'(u)= i$, then $i\notin L'(v)$, we say we obtained $L'$ from $L$ by {\em updating} and $L'$ is {\em updated}.
	Note that updating can be carried out in $O(|V(G)|+|E(G)|)$ time. 
	
	By reducing to an instance of 2-SAT, which can be solved in linear time in the number of variables and clauses \cite{AB79}, several authors \cite{EK86,EP79,VV76} independently proved the following. 
	\begin{lemma}\label{2Sat}
		Let $L$ be a palette of a graph $G$ such that $|L(v)|\leq 2$ for all $v\in V(G)$. 
		Then a coloring of $(G,L)$, or a determination that none exists, can be obtained in $O(|V(G)|+|E(G)|)$ time. 
	\end{lemma}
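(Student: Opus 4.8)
The plan is to reduce $(G,L)$ to an instance of 2-SAT with $O(|V(G)|)$ variables and $O(|E(G)|)$ clauses and then invoke the linear-time 2-SAT algorithm of~\cite{AB79}. First I would preprocess: if $L(v)=\emptyset$ for some $v$, report immediately that no coloring exists. Otherwise, update $L$ to obtain an updated palette $L'$ in $O(|V(G)|+|E(G)|)$ time; as noted in the excerpt, $(G,L')$ is colorable if and only if $(G,L)$ is, and any coloring of $(G,L')$ is a coloring of $(G,L)$. If some $L'(v)=\emptyset$ after updating, again report failure. Every vertex $v$ with $|L'(v)|=1$ then has its color forced, and by the defining property of an updated palette this forced color already differs from every color remaining in the list of each neighbour of $v$, so such vertices impose no further constraint.

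Next, for each vertex $v$ with $|L'(v)|=2$, fix an arbitrary enumeration $L'(v)=\{\alpha_v,\beta_v\}$ and introduce a Boolean variable $x_v$, with the intended meaning that $x_v$ is true exactly when $v$ receives colour $\alpha_v$ (and false exactly when it receives $\beta_v$). For each edge $uv$ with $|L'(u)|=|L'(v)|=2$ and each colour $i\in L'(u)\cap L'(v)$, add the $2$-literal clause that forbids assigning colour $i$ to both endpoints, namely $(\neg\ell_u\vee\neg\ell_v)$, where $\ell_u$ is $x_u$ if $i=\alpha_u$ and $\neg x_u$ if $i=\beta_u$, and similarly for $\ell_v$. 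Since $|L'(u)\cap L'(v)|\le 2$, each edge contributes at most two clauses, so the instance has $O(|V(G)|)$ variables and $O(|E(G)|)$ clauses and is built in $O(|V(G)|+|E(G)|)$ time.

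Finally I would verify the equivalence: a truth assignment satisfies all clauses if and only if the corresponding colouring — assigning each forced vertex its unique colour and each $2$-list vertex $v$ the colour $\alpha_v$ or $\beta_v$ according to $x_v$ — is a proper colouring of $(G,L')$, since the only edges that could be monochromatic are those joining two $2$-list vertices along a shared colour, and those are exactly ruled out by the clauses introduced. Running the algorithm of~\cite{AB79} yields a satisfying assignment, or a proof that none exists, in time linear in the number of variables plus clauses, giving total running time $O(|V(G)|+|E(G)|)$. There is essentially no obstacle here; the only points needing (minor) care are the bookkeeping showing that an edge incident to a size-$1$ list needs no clause, and checking that every potential monochromatic edge is captured by precisely the clauses that were added.
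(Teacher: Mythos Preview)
Your proposal is correct and follows exactly the approach the paper indicates: the paper does not give a proof of this lemma but simply attributes it to~\cite{EK86,EP79,VV76} via a reduction to 2-SAT solved by~\cite{AB79}, and your write-up supplies precisely the standard details of that reduction.
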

	Let $G$ be a graph. 
	A subset $S$ of $V(G)$ is called {\em monochromatic} with respect to a given coloring $c$ of $G$ if $c(u)=c(v)$ for all $u,v\in S$.  
	Let $L$ be a palette of $G$, and $Z$ a set of subsets of $V(G)$. 
	We say that $(G,L,Z)$ is {\em colorable} if there is a coloring $c$ of $(G,L)$ such that $S$ is monochromatic with respect to $c$ for all $S\in Z$. 
	
	A triple $(G',L',Z')$ is a {\em restriction} of $(G,L,Z)$ if 
	\begin{itemize}
		\item[(a)] $G'$ is an induced subgraph of $G$,
		\item[(b)] the palette $L'$ is a subpalette of $L$ restricted to the set $V(G)$,
		\item[(c)] $Z'$ is a set of subsets of $V(G')$ such that if $S\in Z$ then $S\cap V(G')\subseteq S'$ for some $S'\in Z'$.
	\end{itemize}
	Let $\mathcal{R}$ be a set of restrictions of $(G,L,Z)$. 
	We say that $\mathcal{R}$ is {\em colorable} if at least one element of $\mathcal{R}$ is colorable. 
	If $\mathcal{L}$ is a set of palettes of $G$, we write $(G,\mathcal{L},Z)$ to mean the set of restrictions $(G,L',Z)$ where $L'\in \mathcal{L}$. 
	Under this assumption, Lemma \ref{2Sat} can be easily modified to obtain a generalized result as follows \cite{BF18}. 
	\begin{lemma}\label{improve 2sat}
		If a palette $L$ of a graph $G$ is such that $|L(v)|\leq 2$ for all $v\in V(G)$, and $Z$ is a set of mutually disjoint subsets of $V(G)$, then a coloring of $(G,L,Z)$, or a determination that none exists, can be obtained in $O(|V(G)|+|E(G)|)$ time. 
	\end{lemma}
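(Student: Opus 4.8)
The plan is to contract each set of $Z$ to a single vertex and then invoke Lemma~\ref{2Sat}. First I would dispose of the obvious obstructions: if $L(v)=\emptyset$ for some $v\in V(G)$, or if some $S\in Z$ contains two adjacent vertices, then $(G,L,Z)$ has no coloring and we report this. So assume neither occurs.

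For each $S\in Z$ with $|S|\ge 2$, identify all vertices of $S$ into a single new vertex $v_S$. Since the members of $Z$ are pairwise disjoint these identifications are independent, and as $\sum_{S\in Z}|S|\le|V(G)|$ they can all be performed in $O(|V(G)|+|E(G)|)$ time. Let $G'$ be the resulting simple graph, obtained by deleting parallel edges and loops; a loop could arise only from an edge with both ends inside some $S$, which we have excluded. Define a palette $L'$ of $G'$ by $L'(v_S)=\bigcap_{v\in S}L(v)$ for each contracted set $S$ and $L'(v)=L(v)$ otherwise. Then $|L'(v)|\le 2$ for every $v\in V(G')$, since each list being intersected has size at most $2$; if some $L'(v_S)$ is empty, report that no coloring exists.

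Now the colorings $c$ of $(G,L)$ for which every $S\in Z$ is monochromatic correspond bijectively to the colorings $c'$ of $(G',L')$: from $c'$ put $c(v)=c'(v_S)$ for $v\in S$ and $c(v)=c'(v)$ otherwise, and reverse this to go back. Every edge of $G$ has its two ends in distinct classes (no edge lies inside a set of $Z$), so it maps to a genuine edge of $G'$ between the corresponding representatives, and $c$ respects it iff $c'$ respects its image; meanwhile the definition of $L'(v_S)$ records exactly the colors simultaneously available to a monochromatic $S$. Hence $(G,L,Z)$ is colorable iff $(G',L')$ is, and a coloring of either is converted to one of the other in linear time. Since $G'$ has at most $|V(G)|$ vertices and $|E(G)|$ edges, Lemma~\ref{2Sat} finds a coloring of $(G',L')$ or reports none in $O(|V(G)|+|E(G)|)$ time, and un-contracting gives the claimed bound.

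I expect no genuine obstacle: the argument is bookkeeping. The one point worth care is the running time, and it is exactly the pairwise disjointness of $Z$ that keeps the contracted instance linear in the size of $(G,L)$. Equivalently, one could augment the $2$-SAT formula built in the proof of Lemma~\ref{2Sat} with equality constraints for each $S=\{u_1,\dots,u_m\}\in Z$, chained as the pairs $u_1u_2, u_2u_3,\dots,u_{m-1}u_m$; disjointness again bounds the total number of added clauses by $O(|V(G)|)$.
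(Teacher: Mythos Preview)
Your proof is correct. The paper does not actually prove this lemma; it simply states that Lemma~\ref{2Sat} ``can be easily modified'' to yield the result and cites \cite{BF18}. Your contraction argument (and the equivalent alternative of chaining equality clauses in the $2$-SAT instance) is exactly the standard way to carry out this modification, so your proposal supplies the details the paper omits and is in line with what the citation intends.
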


	\section{Cleaning of $G$}
	Throughout the paper, for an induced odd cycle $C:=v_1-v_2-v_3-v_4-v_5-v_6-v_7-v_1$ and $i\in [7]$ in a graph $G\in \mathcal{G}_{10,7}$ , we define   
	\begin{itemize}
		\item $A_i^C=\{x|N_C(x)=\{v_i\}\}$; 
		\item $B_i^C=\{x|N_C(x)=\{v_{i-1},v_{i+1}\}\}$. 
	\end{itemize}
	Let $A^C=\bigcup A_i^C$, $B^C=\bigcup B_i^C$. Note that $A^C\cup B^C= N(C)$ since $G$ is $C_5$-free and $C_3$-free. For simplicity, we omit the superscript for these sets and write $A,B,A_i,B_i$ when there is no risk of confusion.
	
	We call a connected graph $G$ in $\mathcal{G}_{10,7}$ with no comparable pair {\em cleaned} if for every induced odd cycle $C$, $V(G)=N[N[C]]$ and every component in $G\setminus N[C]$ is either an isolated vertex or a complete bipartite graph. 
	In this section, our aim is to reduce a connected graph $G$ to be cleaned in polynomial time. 
	Formally, we prove the following Lemma. 
	
	\begin{lemma}\label{cleaned}
		Let $G$ be a connected graph in $\mathcal{G}_{10,7}$.  
		There is an algorithm to obtain a cleaned induced subgraph $G'$ of $G$ in $O(|V(G)|^7(|V(G)|+|E(G)|))$ time such that $G$ is 3-colorable if and only if $G'$ is 3-colorable. 
		Moreover, any 3-coloring of $G'$ can be extended to a 3-coloring of $G$ in $O(|V(G)|+|E(G)|)$ time. 
	\end{lemma}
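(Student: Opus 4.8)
The plan is to prove Lemma~\ref{cleaned} in two stages, each justified by a structural analysis of a single induced odd cycle $C$ and then applied to every induced $C_7$ of $G$. Fix an induced $C_7$, say $C := v_1 - v_2 - \cdots - v_7 - v_1$. First I would establish the ``distance at most $2$'' claim: every vertex of $G$ lies in $N[N[C]]$. Since $G$ is connected, it suffices to show that no vertex is at distance exactly $3$ from $C$; equivalently, that $N(N[C]) = N(C) \cup (N(N(C)) \setminus N[C])$ already exhausts $V(G)$. The key tool here is $P_{10}$-freeness: a vertex $z$ at distance $3$ from $C$ together with a shortest path $z - y - x - v_i$ to $C$ and a sub-path of $C$ of suitable length would yield an induced $P_{10}$ (or an induced odd cycle other than $C_7$, using that $G$ is $C_3$-, $C_5$-free, and that the only induced odd cycles are $C_7$). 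I would argue by taking a longest induced path starting at such a $z$ and walking it along $C$, splitting into cases according to which of the $A_i, B_i$ the attachment vertices land in; $C$ has $7$ vertices, so a path reaching distance $3$ leaves plenty of room to complete a $P_{10}$. Once $V(G) = N[N[C]]$ is known, delete all vertices that are redundant (comparable pairs can be removed by Lemma~\ref{comparable}, keeping the graph without a comparable pair).

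The second stage controls the components of $G \setminus N[C]$. Let $D$ be such a component; every vertex of $D$ has all its neighbours in $D \cup N(C)$, and by the distance bound $D \subseteq N(N(C))$, so $D$ is ``anchored'' to $A \cup B$. The goal is: either $D$ is a single vertex, or $D$ induces a complete bipartite graph. Suppose $D$ is not an isolated vertex. The main structural input is that $D$ contains no induced $P_4$: an induced $P_4$ inside $D$, say $a - b - c - d$, can be extended through a vertex of $N(C)$ adjacent to one endpoint and then along $C$ to produce an induced $P_{10}$, again using $P_{10}$-freeness and the restriction on odd cycles to rule out the alternative of creating a forbidden short odd cycle. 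A connected $P_4$-free graph (a cograph) that is also triangle-free — and $D$ is triangle-free because $G$ is $C_3$-free — is exactly a complete bipartite graph. (If $D$ were not triangle-free we would already have a $C_3$, contradiction; and a connected triangle-free cograph is complete bipartite.) This gives the desired dichotomy for each component.

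For the running time: checking whether $G$ has a comparable pair and performing one reduction costs $O(|V(G)| + |E(G)|)$ by Lemma~\ref{comparable}. Enumerating all induced $C_7$'s costs $O(|V(G)|^7)$, and for each we perform the two stages above — each stage is a bounded number of passes over the vertex/edge set, hence $O(|V(G)| + |E(G)|)$ per cycle — and after any deletion we restart (the number of deletions is at most $|V(G)|$, which can be absorbed into the polynomial). This yields the stated bound $O(|V(G)|^7(|V(G)| + |E(G)|))$. Finally, since every reduction step either removes a comparable vertex (color extension by Lemma~\ref{comparable}) or removes a vertex that was shown to be dominated/redundant with an explicit rule for recovering its color in $O(|V(G)| + |E(G)|)$ time, a $3$-coloring of $G'$ lifts back to a $3$-coloring of $G$ within the claimed time, and $G$ is $3$-colorable iff $G'$ is.

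\medskip

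The step I expect to be the main obstacle is the case analysis in the first stage — proving $V(G) = N[N[C]]$ and, relatedly, ruling out induced $P_4$'s inside a far component. The difficulty is not any single case but the sheer branching: a vertex at distance $2$ or $3$ from $C$ may attach to $N(C)$ through vertices in various $A_i$ or $B_i$, and for each configuration one must exhibit either an induced $P_{10}$ or an induced odd cycle of length $\neq 7$ (and check that the cycle produced is genuinely induced, i.e. that no unwanted chords exist — which is where the no-comparable-pair and $C_3$-,$C_5$-freeness hypotheses are repeatedly invoked). Organizing this so the argument stays finite and transparent (e.g. by always choosing a shortest connecting path and a longest induced path, and exploiting the cyclic symmetry of $C_7$ to cut the number of essentially distinct cases) is the technical heart of the section.
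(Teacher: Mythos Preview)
Your proposal has a genuine gap in each stage.

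\textbf{Stage 1.} The claim that every vertex of $G$ already lies in $N[N[C]]$ is false in general: vertices at distance exactly $3$ from $C$ can exist. A path $z{-}y{-}x{-}v_i$ from such a $z$ to $C$, extended along $C$, yields only nine vertices, not ten; the no-comparable-pair hypothesis lets you push this to rule out distance $4$, but not distance $3$. What the paper actually does is \emph{delete} the distance-$3$ set $D_2$ together with its attachment set $D_1'' := N(D_2)$ at distance $2$, and then prove that any $3$-coloring of the remainder extends back. That extension is the real work you are missing: one first shows that every $x\in D_1''$ has $N(x)\cap N(C)\subseteq B$ (never $A$), in fact $N(x)\cap B\subseteq B_{i-1}\cup B_{i+1}$ for some $i$ (its ``sign list''), so that the color of $v_i$ is always free for $x$; a further case analysis on how the sign lists of the $D_1''$-neighbors of a single $d\in D_2$ interact (``good'' versus ``bad'' vertices) then guarantees that after recoloring $D_1''$ at most two colors appear on $N(d)$, so $d$ can be colored last. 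Without this mechanism you have no way to recover colors on $D_2\cup D_1''$, and the equivalence ``$G$ is $3$-colorable iff $G'$ is'' is unproved.

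\textbf{Stage 2.} Even after deleting $D_2\cup D_1''$ and re-removing comparable pairs, components of the resulting $G_1\setminus N[C]$ need \emph{not} be $P_4$-free, so your cograph argument does not apply. What the $P_{10}$-argument you sketch actually yields is weaker: along any induced $P_4=a{-}b{-}c{-}d$ inside $G_1\setminus N[C]$, the vertices $a$ and $c$ have the same neighborhood in $N(C)$. They are still not comparable, because they can differ on neighbors inside the component (indeed $d\in N(c)\setminus N(a)$), so the $P_4$ survives. From this weaker conclusion the paper deduces that in any non-complete-bipartite component all vertices on the same side of the bipartition have identical $N(C)$-attachment, and hence the whole component can be collapsed to a single edge with colors propagated back. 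Your route via $P_4$-freeness attempts to prove a statement that does not hold.
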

	
	The corresponding Algorithm for Lemma~\ref{cleaned} is as follows.

	\begin{breakablealgorithm}
		\caption{Algorithm of Cleaning}
		\label{alg:clean}

		\begin{algorithmic}[1]
			\REQUIRE A connected graph $G$ in $\mathcal{G}_{10,7}$.   
			\ENSURE A cleaned graph or determine that $G$ is bipartite.   
			\IF{$G$ has no induced $C_7$}
			\RETURN $G$ is bipartite. 
			\ELSE
			\WHILE{$G$ has two non-adjacent vertices $u,v$ such that $N(u)\subseteq N(v)$}
			\STATE Delete $u$ and let $G=G\setminus \{u\}$.
			\ENDWHILE
			\FOR{each induced $C_7$ $C\in G$}
			\STATE Delete all vertices at distance from  $C$ at least 3 and all their neighbors.
			\FOR{each non-complete bipartite component in $G\setminus N[C]$}
			\STATE Delete all vertices in this component except for an edge.
			\ENDFOR
			\STATE Let $G$ be the remaining graph.
			\ENDFOR
			\ENDIF
			\RETURN A cleaned graph $G$.
		\end{algorithmic}
	\end{breakablealgorithm}
	
	\begin{proof}[Proof of Lemma \ref{cleaned}.]
		For an induced odd cycle $C:=v_1-v_2-v_3-v_4-v_5-v_6-v_7-v_1$ of $G$, we classify some sets as follows. 
		Define   
		\begin{itemize} 
			\item $D_1=\{x|d(x,C)=2\}$;
			\item $D_2=\{x|d(x,C)=3\}$; 
			\item $D'_1=\{x|x\in D_1,x\text{ has no neighbors in }D_2\}$;
			\item $D''_1=\{x|x\in D_1,x\text{ has a neighbor in }D_2\}$.
		\end{itemize}
		Note that $D_1=D'_1\cup D''_1$. 
		
		\begin{claim}\label{basic}
			Here are several useful properties of $G$:
			\begin{itemize}
				\item[(a)] $V(G)=V(C)\cup A\cup B\cup D_1\cup D_2$; 
				\item[(b)] for every vertex $x\in D''_1$, $x$ has no neighbors in $A$;
				\item[(c)] $D_2$ is a stable set;
				\item[(d)] $D_1''$ is a stable set and has no neighbors in $D'_1$. 
			\end{itemize} 
		\end{claim}                                           
		\begin{proof}                        
			$(a)$  
			Since $G$ has no induced $C_3$ and $C_5$, it is clear that $N(C)=A\cup B$.                                                                                       
			Thus, to prove (a), it is equivalent to prove that there is no vertex $v$ such that $d(v,C)=4$. 
			Suppose to the contrary that such a $v$ exists. 
			Let $v'\in N(C),v''\in D_1,v'''\in D_2$ such that $v'-v''-v'''-v$ is an induced $P_4$ in $G$ and we may assume that $v'$ is adjacent to $v_1$ and possibly $v_6$ . 
			Since $G$ has no comparable pair, $v$ has a neighbor $u$, which is not adjacent to $v''$. 
			Then $u-v-v'''-v''-v'-v_1-v_2-v_3-v_4-v_5$ is an induced $P_{10}$.  
			
			$(b)$
			Suppose to the contrary that there is a vertex $x\in D''_1$ such that $x$ has a neighbor $a\in A$. 
			By the definition of $D''_1$, $x$ has a neighbor $d$ in $D_2$. 
			Without loss of generality, we may assume that $a$ is adjacent to $v_1$. 
			Since $G$ has no comparable pair, $d$ has a neighbor $d'$ not adjacent to $a$. 
			Then $d'-d-x-a-v_1-v_2-v_3-v_4-v_5-v_6$ is an induced $P_{10}$. 
			
			$(c)$
			Suppose to the contrary that there is an edge $d-d'$ in $D_2$. 
			Let $v$ be a neighbor of $d$ in $D_1$, $v'$ be a neighbor of $v$ in $V(C)$. 
			By symmetry, we may assume that $v'$ is adjacent to $v_1$ but $v_3$. 
			Since $G$ has no comparable pair, $d'$ has a neighbor $d''$ not adjacent to $v$. 
			Since $d''-d'-d-v-v'-v_1-v_2-v_3-v_4-v_5$ is not an induced $P_{10}$ in $G$, $d''$ is adjacent to $v'$, and so, $v'-d''-d'-d-v-v'$ is an induced $C_5$. 
			
			$(d)$
			Suppose to the contrary that there is an edge $x-x'$ such that $x\in D''_1,x'\in D_1$. 
			Let $d$ be a neighbor of $x$ in $D_2$, $v$ be a neighbor of $x'$ in $N(C)$. 
			By symmetry, we may assume that $v$ is adjacent to $v_1$ but $v_3$. 
			Since $G$ has no comparable pair, $d$ has a neighbor $d'$ not adjacent to $x'$. 
			Since $d'-d-x-x'-v-v_1-v_2-v_3-v_4-v_5$ is not an induced $P_{10}$ in $G$, $d'$ is adjacent to $v$, and so, $d'-d-x-x'-v-d'$ is an induced $C_5$. 
		\end{proof}
		\begin{claim}\label{X_1 neighbor in B}
			For every $x\in D_1$, there is an index $i\in [7]$ such that $N(x)\cap B\subseteq B_{i-1}\cup B_{i+1}$. 
		\end{claim}
		\begin{proof} 
			Assume that $x$ has a neighbor $b$ in $B_{j}$, where $j\in [7]$. 
			If $x$ also has a neighbor in $B_{j\pm 1}$, say $b'\in B_{j+1}$, then $x-b-v_{j+1}-c_{j}-b-x$ is an induced $C_5$. 
			If $x$ also has a neighbor in $B_{j\pm 3}$, say $b''\in B_{j+3}$, then $x-b-v_{i+1}-v_{i+2}-b''-x$ is an induced $C_5$. 
			So, if $x$ has some neighbors in $B\setminus B_j$, they must in $B_{j\pm 2}$. 
			And also, by replacing $j$ with $j+2$, either $N(x)\cap B_{j+2}=\emptyset$ or $N(x)\cap B_{j-2}=\emptyset$. 
			Thus, we prove Claim \ref{X_1 neighbor in B} by setting $i=j+1$ or $i=j-1$. 
		\end{proof}
		\vspace{0.2cm}
		For a vertex $x$ in $D_1''$ such that $N(x)\cap B\subseteq B_{i-1}\cup B_{i+1}$ for some $i\in [7]$, define a set $T(x)$ be the {\em sign list} of $x$ such that
		$$T(x)=
		\begin{cases}
			\{i\}& \text{if }N(x)\cap B_{i-1}\neq\emptyset \text{ and } N(x)\cap B_{i+1}\neq\emptyset;\\ 
			\{i-2,i\}& \text{if }N(x)\cap B_{i+1}=\emptyset;\\
			\{i,i+2\}& \text{if }N(x)\cap B_{i-1}=\emptyset.
		\end{cases}
		$$
		By Claim \ref{X_1 neighbor in B}, $T(x)$ is well-defined. 
		We call a vertex $x\in D_1''$ {\em good} if $|T(x)|=1$, and {\em bad} otherwise
	. 
		
		\begin{claim}\label{bad to good}
			Let $d\in D_2$ with two neighbors $x_1,x_2$ in $D''_1$ such that $x_1$ has a neighbor in $B_i$ and $x_2$ has a neighbor in $B_{i+3}$ for some $i$. 
			Then $T(x_1)=\{i+1\}$ and $T(x_2)=\{i+2\}$. 
		\end{claim}
		\begin{proof}
			Without loss of generality, we may assume that $i=7$. 
			Let $b_7\in B_7,b_3\in B_3$ such that $b_7x_1,b_3x_2\in E(G)$. 
			Note that $v_7-v_6-b_7-x_1-d-x_2-b_3-v_4-v_3$ is an induced $P_9$. 
			Since $G$ has no comparable pair, let $w$ be a neighbor of $v_7$ that is not adjacent to $b_7$. 
			Since $w-v_7-v_6-b_7-x_1-d-x_2-b_3-v_4-v_3$ is not an induced $P_{10}$, $wx_2\in E(G)$. 
			By Claim \ref{basic}(b), $w\in B_1$, and hence $T(x_2)=\{2\}$. 
			By a similar analysis, $T(x_1)=\{1\}$. 
			This completes the proof of Claim \ref{bad to good}. 
		\end{proof}
		By Claim \ref{bad to good}, if a vertex $d$ in $D_2$ has a good neighbor $x$ in $D_1''$ with $T(x)=\{i\}$, then each good neighbor of $d$ in $D_1''$ has sign list either $\{i-1\}$ or $\{i+1\}$. 
		Thus, for every vertex $d\in D_2$, there are no three good neighbors of $d$ in $D''_1$ with pairwise different sign lists. 
		In particular, there is an index $i\in [7]$ such that for each good neighbor of $d$, its sign list is either $\{i\}$ or $\{i+1\}$. 
		
		\begin{claim}\label{nb}
			If $d$ has at least one good neighbor or has two bad neighbors with different sign lists, then all its bad neighbors has no neighbors in $D_2$ except $d$. 
			Moreover, 
			\begin{itemize}
				\item[(a)] $d$ has no bad neighbors if $d$ has two good neighbors with different sign lists; 
				\item[(b)] all bad neighbors of $d$ have the same sign list if $d$ has a good neighbor $x_1$ and all good neighbors of $d$ have the same sign list; 
				\item[(c)] there is an index $i\in 7$ such that for each bad neighbor of $d$, it has list either $\{i-1,i+1\}$ or $\{i,i+2\}$. 
			\end{itemize}
		\end{claim}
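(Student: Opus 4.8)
The plan is to prove every part by contradiction, producing in each case one of the forbidden induced subgraphs: $P_{10}$ (since $G$ is $P_{10}$-free) or $C_3,C_5,C_9$ (since every induced odd cycle of $G$ is a $C_7$). Two facts will be used repeatedly. First, unwinding the definition of the sign list, a bad neighbour $x$ of $d$ with $T(x)=\{e,e+2\}$ has all its $B$-neighbours in the single class $B_{e+1}$ (and at least one there), while a good neighbour with $T=\{e\}$ has a neighbour in each of $B_{e-1}$ and $B_{e+1}$ and none elsewhere in $B$; so bad vertices reach one $B$-class and good vertices exactly two. Second, from Claim~\ref{bad to good} and the discussion following it: if two $D_1''$-neighbours of some $d''\in D_2$ have $B$-neighbours in $B_s$ and $B_{s+3}$, then both are good; consequently, if $u$ is a bad neighbour of $d''$ with $B$-neighbours in $B_p$ then no $D_1''$-neighbour of $d''$ has a $B$-neighbour in $B_{p-3}\cup B_{p+3}$, and moreover if $d''$ has a good neighbour $y$ with $T(y)=\{c\}$ then $c\in\{p-1,p,p+1\}$.

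The first step is to prove the main statement; clauses (a)--(c) will then follow. So suppose a bad neighbour $x$ of $d$, with $N(x)\cap B\subseteq B_p$, has a neighbour $d'\in D_2$ with $d'\neq d$. Since $D_2$ is stable (Claim~\ref{basic}), $N(d')\subseteq D_1$; since $G$ has no comparable pair and $x$ lies in $N(d)\cap N(d')$ while $d,d'$ are non-adjacent, some $w\in N(d')\setminus N(d)$ exists, and then $w\in D_1''$, $w\neq x$, and $w$ is not adjacent to $d$. Fix $b_w\in N(w)\cap B$, say $b_w\in B_f$; the facts above give $f\notin\{p-3,p+3\}$, and if $d$ has a good neighbour $y$ with $T(y)=\{c\}$ then $c\in\{p-1,p,p+1\}$. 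In the latter case I would build the path $b_w-w-d'-x-d-y-b$, where $b$ is a prong of $y$ chosen, using the constraint on $c$, not adjacent to $x$; this has seven vertices, and after dispatching the finitely many possible chords — the chord $y$--$d'$, and $B$--$B$ or $B$--$D_1''$ chords at the two ends, each either forbidden by the position constraints on $p,c,f$ or, when the relevant $B$-neighbour cannot be picked to avoid it, forcing a comparable pair or a bad neighbour of $d'$ with a second $D_2$-neighbour — it is an induced $P_7$, and extending it into $C$ by three more vertices yields an induced $P_{10}$. If instead $d$ has two bad neighbours with different sign lists, I would choose a bad neighbour $z\neq x$ of $d$ with $N(z)\cap B\subseteq B_q$, $q\neq p$ (so $q\notin\{p-3,p+3\}$ by Claim~\ref{bad to good}), and run the same construction with $b_z-z-d-x-d'-w-b_w$ in place of the previous path.

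With the main statement available, (a)--(c) rest on one gadget. Given two $D_1''$-neighbours $u_1,u_2$ of $d$ and $B$-neighbours $b_1\in B_r\cap N(u_1)$, $b_2\in B_{r'}\cap N(u_2)$ with $b_1\notin N(u_2)$ and $b_2\notin N(u_1)$, the path $b_1-u_1-d-u_2-b_2$ is an induced $P_5$, using that $D_1''$ and $D_2$ are stable, that a vertex of $D_2$ has no neighbour in $N(C)$, and that when $r'-r\equiv\pm2\pmod 7$ the vertices $b_1$ and $b_2$ are non-adjacent (else a triangle through their common cycle-neighbour). Closing this $P_5$ with an arc of $C$ between a cycle-neighbour of $b_1$ and one of $b_2$ gives an induced cycle on $5$ plus the number of vertices of the arc, and for $r'-r\equiv\pm2$ the arc can be taken with exactly four vertices, producing an induced $C_9$ — impossible. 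Hence $d$ has no two $D_1''$-neighbours reaching $B$-classes at cyclic distance $2$; together with the distance-$3$ exclusion from Claim~\ref{bad to good}, all bad neighbours of $d$ fall in a window $\{B_i,B_{i+1}\}$, which is (c). For (a) one applies the gadget with $u_1$ a bad neighbour at $B_p$ and $u_2$ the good neighbour of $d$ having a prong at distance $2$ from $B_p$ (one of the two good neighbours always does, given their consecutive sign lists); for (b), with $u_2$ a good neighbour whose far prong is at distance $2$ from the bad neighbour $u_1$. The requirement $b_1\notin N(u_2)$, $b_2\notin N(u_1)$ can fail only through a comparable pair or through a bad neighbour of $d$ with a second $D_2$-neighbour, both ruled out (the latter by the main statement).

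The hard part is not conceptual but the inducedness bookkeeping hidden in the phrases above. Justifying that the seven-vertex path with its closing arc, and the $P_5$ in the gadget, are genuinely induced requires ruling out accidental chords of three kinds — $B_i$--$B_j$ edges (which do occur when $|i-j|\in\{1,2\}$ modulo $7$), $B_i$--$D_1''$ edges, and $C$--$B_i$ edges — and this forces a split into many sub-cases indexed by the cyclic positions of $p$, $c$, $f$ and the chosen witnesses on the seven-cycle, with a specific choice of witnessing $B$-neighbours in each; every sub-case is routine, but they are numerous. A secondary subtlety is that the main statement must be proved without reference to (a)--(c), which then invoke it; there is no circularity, since when $d$ fails the hypothesis of the claim all its bad neighbours already have the same sign list and (c) is trivial.
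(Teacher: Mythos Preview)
Your $C_9$ gadget (closing the induced $P_5$ $b_1-u_1-d-u_2-b_2$ through four consecutive cycle vertices when the two $B$-classes sit at cyclic distance~$2$) is correct and is exactly the device the paper uses inside (b) and (c). Your derivation of (a)--(c) from the gadget together with the main statement is also sound: when the cross-adjacency $b_1\in N(u_2)$ cannot be avoided one gets $N(x)\cap B\subseteq N(x_1)$, and combined with $N(x)\cap D_2=\{d\}$ this forces a comparable pair.

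The gap is in your proof of the main statement itself, and it is not just bookkeeping. In the path $b_w-w-d'-x-d-y-b$ the decisive chord is $y\,d'$, and your proposed resolution---that it ``forces\dots a bad neighbour of $d'$ with a second $D_2$-neighbour''---is circular: that bad neighbour is $x$, its second $D_2$-neighbour is $d$, and you have merely reproduced the starting configuration with $d$ and $d'$ interchanged; no measure decreases, so the argument loops. (The same happens with the chord $z\,d'$ in the two-bad-neighbours variant.) The paper handles this by reversing both the order and the strategy. It proves (a), then (b), \emph{first}, so that in the good-neighbour case one already knows $T(x)=\{i-1,i+1\}$ against $T(x_1)=\{i\}$; then the path $d'-x-d-x_1-b_{i-1}-v_{i-2}-v_{i-3}-v_{i+3}-v_{i+2}-v_{i+1}$ has $d'x_1$ as its \emph{only} possible chord, and rather than avoiding it the paper shows it must be present. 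Running this over all neighbours of $d$ gives $N(d)\subseteq N(d')$, a comparable pair. The two-bad-neighbours case is treated the same way after (c) is established. Your auxiliary vertex $w\in N(d')\setminus N(d)$ is never needed; introducing it is precisely what creates the case explosion, and the paper's argument never leaves the set $N(d)$.
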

		\begin{proof}
			Assume first that $d$ has two good neighbors $x_1,x_2$ in $D_1''$ with $T(x_1)=\{i\}$, $T(x_2)=\{i+1\}$. 
			We then assume that $x$ be a bad neighbor of $d$. 
			By Claim \ref{bad to good}, either $T(x)=\{i-1,i+1\}$ or $T(x)=\{i,i+2\}$, say $T(x)=\{i,i+2\}$ by symmetry. 
			Let $b_{i-1}$ be a neighbor of $x_1$ in $B_{i-1}$, $b_i$ be a neighbor of $x_2$ in $B_{i}$. 
			Then $x-d-x_1-b_{i-1}-v_{i-2}-v_{i-3}-v_{i+3}-v_{i+2}-v_{i+1}-b_i$ is an induced $P_{10}$. 
			Thus, we have that $d$ has no bad neighbors if $d$ has two good neighbors in $D_1''$ with different sign lists.

			We then assume that	$d$ has a good neighbor $x_1$ and all good neighbors of $d$ have the same sign list $\{i\}$.  
			Let $x$ be a bad neighbor of $d$. 
			By Claim \ref{bad to good} again, $T(x)$ is $\{i-2,i\}$, $\{i-1,i+1\}$ or $\{i,i+2\}$. 
			If $T(x)=\{i-2,i\}$ or $T(x)=\{i,i+2\}$, say $T(x)=\{i,i+2\}$ by symmetry, then let $b$ be a neighbor of $x_1$ in $B_{i-1}$ and let $b'$ be a neighbor of $x$ in $B_{i+1}$. 
			Since $d-x_1-b-v_{i-2}-v_{i-3}-v_{i+3}-v_{i+2}-b'-x-d$ is not an induced $C_9$, $x_1b'\in E(G)$. 
			Hence, $N(x)\cap B\subseteq N(x_1)$ by the arbitrariness of $b'$. 
			Since $G$ has no comparable pair, $x$ has a neighbor $d'$ in $D_2$ not adjacent to $x_1$. 
			Then $d'-x-d-x_1-b-v_{i-2}-v_{i-3}-v_{i+3}-v_{i+2}-v_{i+1}$ is an induced $P_{10}$.  
			Hence, $T(x)=\{i-1,i+1\}$. 
			Let $x_1$ be a good neighbor of $d$ with a neighbor $b_{i-1}\in B_{i-1}$, $b_i$ be a neighbor of $x$ in $B_i$. 
			If $x$ has another neighbor $d'$ in $D_2$, then since $d'-x-d-x_1-b_{i-1}-v_{i-2}-v_{i-3}-v_{i+3}-v_{i+2}-v_{i+1}$ is not an induced $P_{10}$, we have $d'x_1\in E(G)$. 
			Hence, by the arbitrariness of $x$ and $x_1$, $d,d'$ have the same neighborhood by the arbitrariness of $x$ and $x_1$, which is a contradiction since $G$ has no comparable pair. 
			
			We finally assume that $d$ has two bad neighbors with different sign lists and has no good neighbors in $D_1''$. 
			Let $x_1$ be a bad neighbor of $d$ with $T(x_1)=\{i-1,i+1\}$ and $x_2$ be another bad neighbor of $d$ with $T(x_2)\neq \{i-1,i+1\}$. 
			Let $b$ be a neighbor of $x_1$ in $B$, $b'$ be a neighbor of $x_2$ in $B$.  
			By Claim \ref{bad to good}, $T(x_2)$ is $\{i-3,i-1\}$, $\{i-2,i\}$, $\{i,i+2\}$ or $\{i+1,i+3\}$. 
			If $T(x_2)=\{i-3,i-1\}$ or $T(x_2)=\{i+1,i+3\}$, say $T(x_2)=\{i+1,i+3\}$ by symmetry, then $d-x_1-b-v_{i-1}-v_{i-2}-v_{i-3}-v_{i+3}-b'-x_2-d$ is an induced $C_9$. 
			So either $T(x_2)=\{i-2,i\}$ or $T(x_2)=\{i,i+2\}$, say $T(x_2)=\{i,i+2\}$ by symmetry. 
			If $x$ has another neighbor $d'$ in $D_2$, then since $d'-x_1-d-x_2-b'-v_{i+2}-v_{i+3}-v_{i-3}-v_{i-2}-v_{i-1}$ is not an induced $P_{10}$, we have $d'x_1\in E(G)$. 
			Hence, by the arbitrariness of $x$ and $x_1$, $d$ and $d'$ have the same neighborhood, which is a contradiction since $G$ has no comparable pair. 
			This completes the proof of Claim \ref{nb}. 
		\end{proof}
		
		Let $G_1'$ be the remaining graph by deleting $D_1''\cup D_2$ from $G$ and let $G_1$ be the graph obtained by Lemma \ref{comparable} from $G_1'$. 
		So $G_1$ has no comparable pair and can be obtained in $O(|V(G)|+|E(G)|)$ time. 
		
		\begin{claim}\label{P_4 neighbor}
			For every induced $P_3$ in $G_1\setminus N[C]$, the ends of the path have the same neighborhood in $A$ and for every induced $P_4$ in $G[D_1]$, the first vertex and the third vertex of this path have the same neighborhood in $B$.  
		\end{claim}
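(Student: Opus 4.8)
The plan is to prove both parts by contradiction, in each case extracting an induced $P_{10}$ from a violating configuration; this is impossible since $G\in\mathcal{G}_{10,7}$. First I would record two facts. Since $D''_1$ is stable and anticomplete to $D'_1$ by Claim~\ref{basic}(d), every vertex of $D''_1$ is isolated in $G[D_1]$, so every induced $P_4$ of $G[D_1]$ has all of its vertices in $D'_1$; and, as $G_1$ arises from $G$ by deleting $D''_1\cup D_2$, every vertex of an induced $P_3$ of $G_1\setminus N[C]$ also lies in $D'_1$. Moreover, by Claims~\ref{basic}(a) and~\ref{basic}(d), a vertex $q\in D'_1$ has $N(q)\subseteq A\cup B\cup D'_1$ --- it has no neighbour on $C$, none in $D_2$, and none in $D''_1$ --- so in particular $D'_1$ is anticomplete to $V(C)$.

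The workhorse I would isolate is the following ``hanging path'' step. Suppose $x\in A_j$ (respectively $x\in B_j$), and $R=r_1-r_2-\dots-r_\ell$ is an induced path that is vertex-disjoint from and anticomplete to $V(C)$, does not contain $x$, and is such that $x$ is adjacent to $r_1$ but to no other vertex of $R$. Then, provided $\ell\ge 3$ in the $A_j$-case and $\ell\ge 4$ in the $B_j$-case, $G$ contains an induced $P_{10}$: let $S$ be the path on the $9-\ell$ consecutive cycle-vertices ending at $v_j$ (respectively at $v_{j-1}$) --- an induced path since $9-\ell\le 6$ --- note that $x$ has no neighbour on $S$ other than its last vertex (in the $B_j$-case this is exactly where $\ell\ge 4$ is used, to keep $v_{j+1}$ among the $\ell-2$ cycle-vertices outside $S$), and concatenate $S$, then $x$, then $R$. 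With this in hand, the first part is immediate: given an induced $P_3$, say $p_1-p_2-p_3$, in $G_1\setminus N[C]$ (so $p_1,p_2,p_3\in D'_1$) and, using the symmetry $p_1\leftrightarrow p_3$, a vertex $a\in A_j$ with $a\sim p_1$ and $a\not\sim p_3$, we have $a\not\sim p_2$ (else $\{a,p_1,p_2\}$ is a triangle), so the workhorse applies to $a$ and $R=p_1-p_2-p_3$ with $\ell=3$, and the resulting induced $P_{10}$ is a contradiction; hence $N(p_1)\cap A=N(p_3)\cap A$.

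For the second part I would take an induced $P_4$ $q_1-q_2-q_3-q_4$ of $G[D_1]$ (so $q_1,\dots,q_4\in D'_1$), assume $N(q_1)\cap B\ne N(q_3)\cap B$, and split into two cases. If some $b\in B_j$ has $b\sim q_1$ and $b\not\sim q_3$, then $b\not\sim q_2$ (triangle with $q_1$) and $b\not\sim q_4$ (else $b-q_1-q_2-q_3-q_4-b$ is an induced $C_5$), so the workhorse applied to $b$ and $R=q_1-q_2-q_3-q_4$ (with $\ell=4$) gives a contradiction. If instead some $b\in B_j$ has $b\sim q_3$ and $b\not\sim q_1$, then $b\not\sim q_2$ and $b\not\sim q_4$ (triangles with $q_3$); since $q_1\not\sim q_3$ and $G$ has no comparable pair, there is $w\in N(q_1)\setminus N(q_3)$, necessarily with $w\ne q_2$, $w\notin V(C)\cup\{b\}$, and $w\in A\cup B\cup D'_1$, and one checks $w\not\sim q_2$ (triangle with $q_1$), $w\not\sim q_4$ (else $w-q_1-q_2-q_3-q_4-w$ is an induced $C_5$), and $w\not\sim b$ (else $w-q_1-q_2-q_3-b-w$ is an induced $C_5$). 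If $w\in A\cup B$, the workhorse applied to $w$ and $R=q_1-q_2-q_3-q_4$ gives a contradiction. If $w\in D'_1$, then $w$ is anticomplete to $V(C)$, and $R'=w-q_1-q_2-q_3$ is an induced $P_4$ with $b$ adjacent to its end $q_3$ and to no other vertex of $R'$ and with $b\notin R'\cup V(C)$, so the workhorse applied to $b$ and $R'$ again gives a contradiction. Hence $N(q_1)\cap B=N(q_3)\cap B$.

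I expect the first part and the first case of the second part to be routine. The real obstacle should be the second case: the witness $w$ supplied by the no-comparable-pair hypothesis may sit in $D'_1$ rather than in $N(C)$, so one cannot reach $C$ by walking out through $w$, and must instead route the $P_{10}$ through $b$ on the $q_3$-side while keeping $w$ on the $q_1$-side. This is precisely where the no-comparable-pair hypothesis earns its keep --- once to produce $w$, and once (via a forbidden induced $C_5$) to force $w\not\sim b$. The only genuinely fiddly computation is the index bookkeeping in the $B_j$-case of the workhorse, and the observation that it needs $\ell\ge 4$ is also what explains why the $P_3$ statement can promise equality of neighbourhoods only in $A$, and not in $B$.
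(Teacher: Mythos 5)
Your proof is correct and follows the same strategy as the paper's: exhibit an induced $P_{10}$ by concatenating a long induced subpath of $C$, an attachment vertex in $N(C)$, and a dangling path in $D_1'$, invoking the no-comparable-pair hypothesis to lengthen the dangling path when needed. The structural differences are worth noting. The paper proves the forward inclusion $N(a)\cap B\subseteq N(c)\cap B$ by the direct $P_{10}$ argument, but for the reverse inclusion it locates the comparability witness $a'$ in $G_1\setminus N[C]$ by combining the already-proved $P_3$ part (so $a'\notin A$) and the forward $B$-inclusion (so $a'\notin B$), then applies the forward inclusion again to the extended $P_4$ $c-b-a-a'$. You instead abstract the $P_{10}$-extraction into a ``hanging path'' workhorse --- which neatly isolates the index bookkeeping and explains why the $P_3$ part controls only $A$-neighbourhoods whereas $B$-neighbourhoods require a $P_4$ --- and handle the reverse inclusion by a direct case split on where the witness $w$ sits, producing a $P_{10}$ in each sub-case (through $w$ itself when $w\in A\cup B$, through $b$ on the $q_3$-side when $w\in D_1'$), without appealing to the $P_3$ part or to the forward inclusion. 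The two proofs are morally identical, but your Case 2 is self-contained and the two halves of the $B$-equality are kept on equal footing, which is a mild tidying-up of the paper's argument.
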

		\begin{proof}
			We first assume that $a-b-c$ is an induced $P_3$ in $G_1\setminus N[C]$ such that $a$ has a neighbor $v\in A$ adjacent to $v_i$. 
			Since $v_{i+2}-v_{i+3}-v_{i-3}-v_{i-2}-v_{i-1}-v_i-v-a-b-c$ is not an induced $P_{10}$, $vc\in E(G)$. 
			By symmetry and the arbitrariness of $v$, $N(a)\cap A= N(c)\cap A$. 
			We then assume that $a-b-c-d$ is an induced $P_4$ in $G_1\setminus N[C]$ such that $a$ has a neighbor $u\in B$ adjacent to $v_i$ but $v_{i-2}$. 
			Since $v_{i+3}-v_{i-3}-v_{i-2}-v_{i-1}-v_i-u-a-b-c-d$ is not an induced $P_{10}$, $uc\in E(G)$. 
			By the arbitrariness of $u$, $N(a)\cap B\subseteq N(c)\cap B$. 
			On the other hand, let $u'\in B$ be a neighbor of $c$ adjacent to $v_i$ but $v_{i-2}$. 
			Since $G_1$ has no comparable pair, $a$ has a neighbor $a'$ not adjacent to $c$. 
			Hence, $a'\in G_1\setminus N[C]$ and $a'-a-b-c$ is an induced $P_4$ in $G_1\setminus N[C]$. 
			Thus, $N(c)\cap B\subseteq N(a)\cap B$.
			So, $N(a)\cap B= N(c)\cap B$.
			This completes the proof of Claim \ref{P_4 neighbor}.
		\end{proof}
		
		\begin{claim}\label{bipartite}
			$G_1\setminus N[C]$ is a bipartite graph. 
		\end{claim}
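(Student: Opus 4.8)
The plan is to argue by contradiction. If $G_1\setminus N[C]$ is not bipartite, it contains an odd cycle; I will pass to a \emph{shortest} one, observe that it is chordless and hence an induced odd cycle of $G$, conclude from $G\in\mathcal{G}_{10,7}$ that it is a $C_7$ lying entirely outside $N[C]$, and then derive a contradiction from the way its vertices attach to $C$, using Claim~\ref{P_4 neighbor} and the fact that $G$ is triangle-free.

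First I would record that $V(G_1)\setminus N[C]\subseteq D_1'$: since $G_1$ is an induced subgraph of $G$ and $V(G)\setminus N[C]=D_1\cup D_2$ by Claim~\ref{basic}(a), while $D_1''\cup D_2$ has been deleted in forming $G_1$, every vertex of $G_1$ outside $N[C]$ lies in $D_1\setminus D_1''=D_1'$. Now suppose $G_1\setminus N[C]$ contains an odd cycle and let $Q$ be a shortest one. A chord of $Q$ would split it into a strictly shorter odd cycle and a strictly shorter even cycle, so $Q$ is chordless; as $G_1\setminus N[C]$ is an induced subgraph of $G$, $Q$ is an induced odd cycle of $G$. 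Since $G$ is $C_3$-free and $C_5$-free and its only induced odd cycles are $C_7$'s, $Q$ is a $C_7$, say $Q=u_1-u_2-\cdots-u_7-u_1$ with every $u_i\in D_1'$.

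Next I would propagate Claim~\ref{P_4 neighbor} around $Q$ (indices mod $7$). For each $i$, the triple $u_{i-1}-u_i-u_{i+1}$ is an induced $P_3$ of $G_1\setminus N[C]$, so $N(u_{i-1})\cap A=N(u_{i+1})\cap A$; chaining these equalities around the $7$-cycle and using that $7$ is odd forces $N(u_i)\cap A$ to equal one common set $N_A$ for all $i$. Likewise each $u_{i-1}-u_i-u_{i+1}-u_{i+2}$ is an induced $P_4$ of $G[D_1]$, so $N(u_{i-1})\cap B=N(u_{i+1})\cap B$, and the same parity bookkeeping gives a common set $N_B$ with $N(u_i)\cap B=N_B$ for all $i$. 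Since each $u_i\in D_1'\subseteq D_1$ has distance exactly $2$ from $C$, it has a neighbor in $N(C)=A\cup B$, whence $N_A\cup N_B\neq\emptyset$.

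Finally, I would pick $w\in N_A\cup N_B$. Then $w\in N(C)$, so $w\notin V(Q)$, and $w$ is adjacent to every $u_i$; in particular $w$ is adjacent to the consecutive pair $u_1,u_2$, so $\{w,u_1,u_2\}$ induces a triangle in $G$, contradicting that $G$ is $C_3$-free. Hence $G_1\setminus N[C]$ is bipartite. The only point needing care is the opening reduction to an \emph{induced} odd cycle, which is what lets the class hypothesis pin it down to a $C_7$, together with verifying that each length-$3$ and length-$4$ subwalk of $Q$ is an induced path of the kind Claim~\ref{P_4 neighbor} covers; the parity argument around the $7$-cycle and the closing triangle argument are then routine.
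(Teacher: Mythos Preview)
Your argument is correct and follows essentially the same route as the paper's proof: take an induced odd cycle in $G_1\setminus N[C]$, use Claim~\ref{P_4 neighbor} to force all its vertices to share a common neighbor in $N(C)$, and read off a triangle. The paper compresses all of this into a single line (``By applying Claim~\ref{P_4 neighbor} to every $P_4$ in $C'$, we generate a $C_3$ in $G_1$''), whereas you spell out the parity propagation around the $7$-cycle for both the $A$- and $B$-neighborhoods and make explicit why the common neighbor exists; your version is more careful but not genuinely different.
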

		\begin{proof}
			Suppose to the contrary that $G_1\setminus N[C]$ is not a bipartite graph. 
			Let $C'$ be an odd hole in $G_1\setminus N[C]$. 
			By applying Claim \ref{P_4 neighbor} to every $P_4$ in $C'$, we generate a $C_3$ in $G_1$. 
			This completes the proof of Claim \ref{bipartite}.  
		\end{proof}
		\begin{claim}\label{p_4 reduction}
			For any non-complete bipartite component $H:=\{H_1,H_2\}$ of $G_1\setminus N[C]$, all vertices in $H_i$ have the same neighborhood in $N(C)$, where $i\in \{1,2\}$.   
		\end{claim}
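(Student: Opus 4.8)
The plan is to reduce everything to Claim \ref{P_4 neighbor}. The key preliminary observation is that $V(H)\subseteq D_1$: every vertex of $G_1\setminus N[C]$ lies in $D_1'$ by Claim \ref{basic}(a) together with the way $G_1$ is obtained (deleting $D_1''\cup D_2$ and then applying Lemma \ref{comparable}). Hence any induced $P_4$ lying inside $H$ is an induced $P_4$ in $G[D_1]$, and any induced $P_3$ inside $H$ is an induced $P_3$ in $G_1\setminus N[C]$, so Claim \ref{P_4 neighbor} is available. Writing $N_H(u)$ for the neighbourhood of $u$ within $H$, I will prove that $N(u)\cap N(C)$ is the same for all $u\in H_1$ (the case of $H_2$ is symmetric), using that $N(C)=A\cup B$ and that whenever $u$-$x$-$v$-$t$ is an induced $P_4$ inside $H$, Claim \ref{P_4 neighbor} gives $N(u)\cap B=N(v)\cap B$ from the $P_4$ and $N(u)\cap A=N(v)\cap A$ from the sub-$P_3$ $u$-$x$-$v$, hence $N(u)\cap N(C)=N(v)\cap N(C)$.

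Next I record two structural facts. First, the distance-$2$ graph on $H_1$, with $u,v\in H_1$ adjacent iff they have a common neighbour in $H_2$, is connected: an $H$-path between two vertices of $H_1$ alternates sides, and its successive $H_1$-vertices, which share the intervening $H_2$-vertex, form a walk in this graph. Second, calling $u,v\in H_1$ \emph{twins} when $N_H(u)=N_H(v)$, there are at least two twin classes: if there were only one, all of $H_1$ would have the same neighbourhood $Y$ in $H_2$, and since $H$ is connected every vertex of $H_2$ would then lie in $Y$, so $H$ would be complete bipartite, contrary to hypothesis (this also forces $|H_1|,|H_2|\ge 2$).

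Now the core step. If $u,v\in H_1$ share a common neighbour $x\in H_2$ but are not twins, pick $t$ in the symmetric difference of $N_H(u)$ and $N_H(v)$, say $t\in N_H(v)\setminus N_H(u)$; then $u$-$x$-$v$-$t$ is an induced $P_4$ inside $H$, so $N(u)\cap N(C)=N(v)\cap N(C)$. Consequently, if two twin classes $C_i\neq C_j$ are \emph{linked}, meaning some $u\in C_i$ and $v\in C_j$ have a common neighbour $x$, then, since all of $C_i$ has neighbourhood $N_H(u)\ni x$ and all of $C_j$ has neighbourhood $N_H(v)\ni x$, every pair in $C_i\times C_j$ shares $x$, and hence by the core step (those pairs being non-twins) all vertices of $C_i\cup C_j$ have one common value of $N(\cdot)\cap N(C)$. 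The ``link graph'' on twin classes is a quotient of the distance-$2$ graph, hence connected by the first structural fact, and has at least two vertices by the second; thus every twin class is linked to another one, the common value $\phi(C_i)$ is therefore well defined for each $i$, and $\phi$ agrees on linked classes, so $\phi$ is constant by connectivity of the link graph. Hence all vertices of $H_1$ share one neighbourhood in $N(C)$, as claimed.

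The step I expect to be the main obstacle is exactly the case of two \emph{twin} vertices of $H_1$ that share a common neighbour: no induced $P_4$ of $H$ has them as its first and third vertices, so Claim \ref{P_4 neighbor} does not apply to them directly, and one must detour through a non-twin vertex. The bookkeeping in the core step — the link graph is connected and has at least two nodes, so such a detour always exists — is precisely what makes this go through, and it is the only place where the hypothesis that $H$ is \emph{not} complete bipartite is used.
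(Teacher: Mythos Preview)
Your proof is correct and more carefully organised than the paper's. Both arguments rest on the same ingredient---Claim \ref{P_4 neighbor} applied to induced $P_4$'s inside $H$ to conclude that the first and third vertices have equal neighbourhood in $N(C)=A\cup B$---but they differ in how they propagate this across all of $H_1$. The paper fixes one induced $P_4$ $a\text{-}b\text{-}c\text{-}d$ with $a,c\in H_1$ and, for an arbitrary $v\in H_1$, does a direct case analysis on whether $v$ is adjacent to $b$ and to $d$, each time exhibiting a new induced $P_4$ with $v$ as an endpoint; the case $d(v,a)\ge 4$ or $d(v,c)\ge 4$ is dismissed as ``clear''. You instead partition $H_1$ into twin classes, observe that any two non-twin vertices sharing a common $H_2$-neighbour sit in an induced $P_4$, and then push equality of $N(\cdot)\cap N(C)$ along the connected link graph on twin classes. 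Your argument makes explicit exactly where the non-complete-bipartite hypothesis enters (forcing at least two twin classes so that every class has a non-twin neighbour available for the detour) and handles the twin case cleanly, which is the step the paper's case analysis is implicitly working around; the paper's version is shorter but leaves the distance-$\ge 4$ case unargued.
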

		\begin{proof}
			By symmetry, we only need to prove this Claim for $i=1$. 
			Since $H$ is not complete, there is an induced $P_4:=a-b-c-d$ in $H$ with $a,c\in H_1$. 
			Then by the Claim \ref{P_4 neighbor}, $a$ and $c$ have the same neighborhood in $N(C)$. 
			Suppose to the contrary that $v$ is a vertex in $H_1\setminus \{a,c\}$ such that $a$ and $v$ have different neighborhoods in $N(C)$. 
			It is clearly that $v$ and $a$ have the same neighborhood in $N(C)$ if $d(v,a)\geq 4$ or $d(v,c)\geq 4$. 
			So, we may assume that both $d(v,a)$ and $d(v,c)$ are equal to 2. 
			Let $v'\in H_2$ be a common neighbor of $a$ and $v$. 
			Suppose first that $v$ is not adjacent to $b$. 
			Then $v-v'-a-b$ is an induced $P_4$ in $H$, and hence, $v$ and $a$ have the same neighborhood in $N(C)$ by Claim \ref{P_4 neighbor}. 
			Thus, $vb\in E(G)$. 
			We then suppose that $v$ is not adjacent to $d$. 
			Then $v-b-c-d$ is an induced $P_4$ in $H$ and hence, $v$ and $c$ have the same neighborhood in $N(C)$ by Claim \ref{P_4 neighbor}. 
			Thus, $v$ is adjacent to both $b$ and $d$. 
			Then $a-b-v-d$ is an induced $P_4$ in $H$. 
			By Claim \ref{P_4 neighbor}, $v$ and $a$ have the same neighborhood in $N(C)$ . 
			This completes the proof of Claim \ref{p_4 reduction}. 
		\end{proof}
		
		By Claim \ref{basic}, we have $V(G)=V(C)\cup A\cup B\cup D'_1\cup D_1''\cup D_2$. 
		Let $G_2$ be the remaining graph from $G_1$ by deleting all vertices in $H$ except an edge for each non-complete bipartite component $H:=\{H_1,H_2\}$ in $G_1\setminus N[C]$. 
		And let $G_3$ be the graph by Lemma \ref{comparable} from $G_2$. 
		So, for every $v\in V(G_3)$, $d(v,C)\leq 2$, $G_3$ has no non-complete bipartite component in $G_3\setminus N[C]$ and $G_3$ has no comparable pair. 
		Moreover, it follows that $G_3$ can be constructed in $O(|V(G)|+|E(G)|)$ time. 
		We then prove that for $G$ is 3-colorable if and only if $G_3$ is 3-colorable. 
		Since $G_3$ is an induced subgraph of $G$, $G_3$ is 3-colorable if $G$ is 3-colorable. 
		We then assume that $G_3$ is 3-colorable and let $c_3$ be a 3-coloring of $G_3$. 
		By Lemma \ref{comparable}, we obtain a 3-coloring $c_2$ of $G_2$ in $O(|V(G)|+|E(G)|)$ time. 
		Define $c_1$ by setting $c_1(v)=c_2(u)$, for every pair of $u,v$ such that $v\in G_1\setminus G_2$, $u\in G_2$ and $u,v$ in the same part of a non-complete bipartite component in $G_1\setminus N[C]$ and leaving $c_1(w)=c_2(w)$ for all $w\in G_2$. 
		Then by the definition of $G_2$, for each part of each non-complete bipartite component in $G_2\setminus N[C]$, only one vertex left in $G_2$. 
		So $c_1$ is a well-defined 3-coloring of $G_1$ and can be obtained in $O(|V(G)|+|E(G)|)$ time. 
		Let $c_1'$ be the 3-coloring of $G_1'$ obtained by Lemma \ref{comparable}.   
		Define $c$ by setting $c(v)=c_1'(v_{T(v)})$ for every good vertex $v\in D''_1$, $c(u)=c_1'(v_{i})$ if $u$ is a bad vertex in $D''_1$ and $T(u)=\{i,i+2\}$, $c(d)$ be the color different from its neighbors assigned in $c_1'$ restricted in $G_1'\setminus D_2$ for all $d\in D_2$ and leaving $c(w)=c_1'(w)$ for all $w\in G_1'$. 
		By Claim \ref{nb}, $c$ is a well-defined 3-coloring of $G$ and could be obtained from $c_1'$ in $O(|V(G)|+|E(G)|)$ time. 
		 
		By applying this step for every induced odd cycle in $G$, we obtain $G'$ as the remaining graph in $O((|V(G)|^7)(|V(G)|+|E(G)|))$ time.
		This completes the proof of Lemma \ref{cleaned}.
	\end{proof}

	\section{Non-trivial Component in $G\setminus N[C]$}
	Let $G$ be a cleaned graph in $\mathcal{G}_{10,7}$ with an induced odd cycle $C:=v_1-v_2-v_3-v_4-v_5-v_6-v_7-v_1$ and $(G,L,Z)$ be a restriction of $(G,L_0,\emptyset)$, where $L_0$ is the palette such that $L_0(v)=\{1,2,3\}$ for all $v\in V(G)$. 
	We call a vertex $v\in G\setminus N[C]$ {\em reducible} for $L$ if $|L(v)|=3$ and there is a color $i$ such that $i\notin L(u)$ for every $u\in N(v)$. 
	Similarly, we call a complete bipartite component $K=(U_1,U_2)$ of $G\setminus N[C]$ {\em reducible} for $L$ if  there are two different colors $i,j$ with $i\notin L(v)$ for every $v\in N(U_1)\cap N(C)$, $i\in L(a)$ for every $a\in N(U_1)$, $j\notin L(u)$ for every $u\in N(U_2)\cap N(C)$ and $j\in L(b)$ for every $b\in U_2$.
	We call a palette $L$ {\em non-reducible} if there are no reducible components and vertices in $G\setminus N[C]$ for $L$. 
	It follows that one can reduce $L$ to obtain a non-reducible updated subpalette $L'$ of $L$ in $O(|V(G)|+|E(G)|)$ time. 
	
	In this section, we assume that $L$ is non-reducible updated and every vertex has list length at most 2 except those in a non-trivial component in $G\setminus N[C]$. 
	Our aim is to obtain a set $\mathcal{R}$ of restrictions of $(G,L,Z)$ in polynomial time such that for every element of $\mathcal{R}$, no vertex has  list length 3. 
	Formally, we prove the following Lemma.  
	
	\begin{lemma}\label{Y}
		Let $G$ be cleaned graph in $\mathcal{G}_{10,7}$  with an induced odd cycle $C:=v_1-v_2-v_3-v_4-v_5-v_6-v_7-v_1$. 
		Let $(G,L,Z)$ be a restriction of $(G,L_0,\emptyset)$ such that $L$ is non-reducible updated and  every vertex in $V(C)$ has list length 1.  
		Then there is a set $\mathcal{R}$ of restrictions of $(G,L,Z)$ with size in $O(1)$ such that 
		\begin{itemize}
			\item[(a)] $(G,L,Z)$ is colorable if and only if $\mathcal{R}$ is colorable; 
			\item[(b)] for each element $(G',L',Z')$ of $\mathcal{R}$ and for every $v$ in a non-trivial component of $G\setminus N[C]$, $|L'(v)|\leq 2$; 
			\item[(c)] $\mathcal{R}$ can be constructed in $O(|V(G)|+|E(G)|)$ time;
			\item[(d)] any coloring of $\mathcal{R}$ can be extended to a coloring of $(G,L,Z)$ in $O(|V(G)|)$ time.   
		\end{itemize} 
	\end{lemma}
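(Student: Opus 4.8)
The plan is to follow the scheme announced in the introduction: split the non‑trivial components of $G\setminus N[C]$ into a bounded number of classes, and for each class produce a bounded set of vertices whose colors, once guessed, leave every vertex of every non‑trivial component with a list of size at most $2$, after deleting a set of vertices that have become reducible and can be recolored greedily afterwards. The first thing to record is that, since every vertex of $V(C)$ has a singleton list and $L$ is updated, every vertex of $A_j$ has $c(v_j)$ removed from its list and every vertex of $B_j$ has $c(v_{j-1})$ removed, so each vertex of $N[C]$ already has a list of size at most $2$; hence the only vertices with list size $3$ lie in $G\setminus N[C]$, and those inside a non‑trivial component belong to some complete bipartite component $K=(U_1,U_2)$ which, as $G$ is cleaned, is contained in $D_1=\{x:d(x,C)=2\}$, so each of its vertices has a neighbour in $N(C)=A\cup B$. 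Since $G$ has no comparable pair and all vertices of $U_1$ have the same neighbourhood inside $K$ (namely $U_2$), the sets $N(a)\cap N(C)$ for $a\in U_1$ are pairwise incomparable, and likewise for $U_2$; together with Claim~\ref{X_1 neighbor in B}, which confines the $B$‑neighbours of each vertex of $K$ to some $B_{i-1}\cup B_{i+1}$, this bounds the ways in which $K$ can attach to $C$. I would also isolate two easy facts used repeatedly: (i) since $L$ is updated, a vertex $u\in U_1$ has $|L(u)|=3$ if and only if no vertex of $U_2\cup(N(u)\cap N(C))$ has a singleton list; and (ii) in any coloring $c$ of $(G,L,Z)$ one has $|c(U_1)|+|c(U_2)|\le 3$ because $K$ is complete bipartite and only three colors are available, so at least one of $U_1,U_2$ is monochromatic.

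Next I would classify the non‑trivial components into $O(1)$ classes according to the sizes of $U_1$ and $U_2$ truncated at a small constant and, for each part, the set of indices $i\in[7]$ together with the attachment pattern to the $A_j$'s supplied by Claim~\ref{X_1 neighbor in B}; all of these parameters range over bounded sets, so there are only constantly many classes. For each class I would then use non‑reducibility of $L$, the incomparability of the $N(C)$‑neighbourhoods, and fact (ii) to show that the components in the class are dominated by a bounded set $F$ of vertices in the following sense: once the vertices of $F$ are colored and the lists are updated, each component $K=(U_1,U_2)$ in the class either already has all list sizes at most $2$, or one of its parts, say $U_1$, is forced to be monochromatic with a determined color $i$, in which case we set $L'(a)=\{i\}$ for all $a\in U_1$ — removing $i$ from the list of every vertex of $U_2$, so all list sizes drop to at most $2$ — and delete the remaining vertices of $K$ that still carry a color appearing in no list of their neighbourhood, which are reducible.

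I would let $\mathcal R$ be the set of restrictions obtained by ranging over all colorings of the bounded union of the sets $F$ over all classes, performing these updates and deletions, and replacing each $S\in Z$ by $S\cap V(G')$. Then property (a) is immediate since the branching is exhaustive and every operation preserves colorability; (b) holds by construction; (c) follows because there are $O(1)$ restrictions, each built by updating and deleting in $O(|V(G)|+|E(G)|)$ time; and for (d), a coloring of a restriction extends to $(G,L,Z)$ by recoloring the deleted reducible vertices one at a time, each in constant time, for a total of $O(|V(G)|)$. The main obstacle is precisely the case analysis establishing the bounded dominating sets $F$ — deciding, from the colors of $F$, which part of each component is forced monochromatic and with which color — and this is where the structural claims of Section~3 (in particular Claim~\ref{X_1 neighbor in B}) together with the non‑reducibility hypothesis carry essentially all of the work.
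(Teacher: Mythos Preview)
Your high-level plan---classify the non-trivial components into $O(1)$ types, find for each type a bounded set whose precoloring forces all list sizes down to at most $2$, then branch---is indeed the paper's approach. However, your proposal leaves the entire content of the proof undone and, more importantly, points to the wrong tools for carrying it out.

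The paper does not rely on your fact~(ii) about $|c(U_1)|+|c(U_2)|\le 3$ or on forcing a part monochromatic; the engine is almost exclusively forbidden-subgraph arguments. The partition is concrete: components of order~$2$ are split into classes $W_1,\dots,W_5$ according to which pairs $(B_i,A_j)$ or $(B_i,B_j)$ the two endpoints reach (Claim~\ref{K_2} shows $W_4=W_5=\emptyset$ via non-reducibility and comparable-pair arguments, and builds explicit dominating sets of size at most $7,28,42$ for $W_1,W_2,W_3$); components of order~$\ge 3$ are split into $Q_1,\dots,Q_4$ according to which pairs $B_i,B_j$ the \emph{signed} part reaches. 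For $Q_1$ and $Q_2$ one again produces small dominating sets (Claims~\ref{Q_1},~\ref{Q_2}), but for $Q_3$ and part of $Q_4$ there is \emph{no} dominating set: instead one argues directly from the fixed coloring of $C$ that a specific color can be deleted from every list in $U_1$ or $U_2$ without changing colorability (Claims~\ref{Q_3},~\ref{Q_4}). In every case the actual work is exhibiting an induced $P_{10}$, $C_9$ or $C_5$ to force an adjacency or rule out a configuration---none of which your sketch anticipates.

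So the gap is not that your plan is wrong, but that the ``bounded $F$'' you promise does not exist uniformly (some classes need direct list surgery rather than domination), and the tools you name---incomparability of $N(C)$-neighbourhoods and fact~(ii)---do not by themselves yield the required structural conclusions. You need Claim~\ref{edges in nieghbors of D} and a long sequence of explicit $P_{10}$/$C_9$ constructions, organized by the $W_j/Q_j$ partition, to make the argument go through.
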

	To give an understanding of the structure of $G\setminus V(C)$, we first analyze the edges between the neighbors of an edge in $G\setminus N[C]$. 
	\begin{claim}\label{edges in nieghbors of D}
		Let $d_1d_2$ be an edge in $G\setminus N[C]$. 
		If $d_1$ has a neighbor $a_i$ in $A_i$, then all neighbors of $d_2$ in $A_{i\pm 1}\cup A_{i\pm 3}\cup B_{i\pm 2}$ are complete to $a_i$. 
		If $d_1$ has a neighbor $b_i$ in $B_i$, then all neighbors of $d_2$ in $A_{i\pm 2}\cup B_{i\pm 1}$ are complete to $b_i$. 
	\end{claim}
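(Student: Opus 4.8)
The plan is to argue by contradiction, handling one neighbour of $d_2$ at a time. Fix a neighbour $w$ of $d_2$ lying in one of the listed sets, write $x$ for the corresponding neighbour of $d_1$ (so $x=a_i$ in the first assertion and $x=b_i$ in the second), and suppose for contradiction that $xw\notin E(G)$. The goal is to exhibit an induced subgraph of $G$ forbidden in $\mathcal{G}_{10,7}$: an induced $C_9$ in almost every case, and an induced $C_{11}$ (equivalently, an induced $P_{10}$) in one exceptional case. First I would record the elementary local facts. Since $3$ and $5$ are odd and different from $7$, $G$ has no induced $C_3$ and no induced $C_5$; and since $d_1,d_2\notin N[C]$, neither is adjacent to any vertex of $C$. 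From $d_1d_2,\,d_2w\in E(G)$ and $C_3$-freeness, $d_1\not\sim w$; from $d_1x,\,d_1d_2\in E(G)$ and $C_3$-freeness, $d_2\not\sim x$. Together with $xw\notin E(G)$ and the fact that $x,d_1,d_2,w$ are pairwise distinct ($x\ne w$ since they lie in different sets, $d_1\ne d_2$, and $x,w\in N(C)$ while $d_1,d_2\notin N[C]$), this shows $x-d_1-d_2-w$ is an induced $P_4$.

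Next I would close this $P_4$ into an induced cycle through $C$. Recall that a vertex of $A_j$ has the single $C$-neighbour $v_j$, and a vertex of $B_j$ has exactly the two $C$-neighbours $v_{j-1},v_{j+1}$, which lie at distance $2$ on $C$. By symmetry it suffices to treat the ``$+$'' members of each list, namely $w\in A_{i+1}\cup A_{i+3}\cup B_{i+2}$ in the first assertion and $w\in A_{i+2}\cup B_{i+1}$ in the second. In each such case \emph{except} $x=a_i$ and $w\in A_{i+1}$, one can choose a $C$-neighbour $v_p$ of $x$ and a $C$-neighbour $v_q$ of $w$ at distance exactly $3$ on $C$; let $P$ be the longer, five-vertex, of the two paths of $C$ between $v_p$ and $v_q$. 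The cycle consisting of the induced $P_4$ $x-d_1-d_2-w$, the edge $wv_q$, the path $P$ from $v_q$ to $v_p$, and the edge $v_px$ has $4+5=9$ vertices, which are pairwise distinct. Its prescribed edges are all present, and the only chords that could occur would be from a second $C$-neighbour of $x$ or of $w$ lying in the interior of $P$; but since those ``other'' $C$-neighbours sit at $C$-distance $2$ from $v_p$, respectively $v_q$, and $P$ runs the long way around $C$, the distance-$3$ pair $v_p,v_q$ can always be chosen so that $P$ misses both of them. Hence the cycle is an induced $C_9$, contradicting $G\in\mathcal{G}_{10,7}$.

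It remains to treat $x=a_i$ and $w\in A_{i+1}$, where the $C$-neighbours $v_i$ of $x$ and $v_{i+1}$ of $w$ are adjacent on $C$, so the previous construction gives only a $C_6$ or a $C_{11}$. I would take the $C_{11}$: the cycle formed by the induced $P_4$ $x-d_1-d_2-w$, the edge $wv_{i+1}$, the path of $C$ from $v_{i+1}$ the long way around to $v_i$, and the edge $v_ix$. This cycle uses all seven vertices of $C$; it is induced for the same reasons as before (each of $x,w$ has a single $C$-neighbour, and $d_1,d_2\notin N[C]$); and it has odd length $11\ne 7$, which is forbidden. This exhausts all cases.

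I expect the only real work to lie in the bookkeeping behind the second paragraph: for each of the five pairs $(x,w)$ that survive the symmetry reduction, one has to pick the right $C$-neighbours $v_p,v_q$ at $C$-distance $3$ and check that the unique five-vertex $C$-path $P$ between them simultaneously (i) joins a $C$-neighbour of $x$ to a $C$-neighbour of $w$, (ii) avoids the ``other'' $C$-neighbour of $x$ and of $w$, so that no chord is created, and (iii) gives a cycle of length $9$ rather than $7$. In $C_7$ these three requirements are compatible for each of the listed sets, but this must be verified case by case.
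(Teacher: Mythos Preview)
Your argument is correct and is precisely the case analysis the paper has in mind when it says the claim ``immediately follows from the fact that $G$ has no induced $C_3$, $C_5$, $C_9$ and $P_{10}$''. The only cosmetic difference is that for $x=a_i$, $w\in A_{i\pm1}$ you exhibit an induced $C_{11}$, whereas the paper's phrasing points to the induced $P_{10}$ obtained by dropping one cycle vertex (e.g.\ $v_i$); both are forbidden in $\mathcal{G}_{10,7}$, so this is immaterial.
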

	The proof of Claim \ref{edges in nieghbors of D} immediately follows from the fact that $G$ has no induced $C_3$, $C_5$, $C_9$ and $P_{10}$. 
	
	Let $\mathcal{K}_1$ be the set of components in $G\setminus N[C]$ that have order 2.
	We partition $\mathcal{K}_1$  as follows:   
	\begin{itemize}
		\item $W_1=\{d_1d_2\in \mathcal{K}_1:N(d_1)\cap A_i\neq \emptyset, N(d_2)\cap A_{i+2}\neq \emptyset\text{ for some }i\in [7]\}$; 
		\item $W_2=\{d_1d_2\in \mathcal{K}_1\setminus W_1:N(d_1)\cap B_i\neq \emptyset, N(d_2)\cap B_{i+3}\neq \emptyset\text{ for some }i\in [7]\}$; 
		\item $W_3=\{d_1d_2\in \mathcal{K}_1\setminus (W_1\cup W_2):N(d_1)\cap B_i\neq \emptyset, N(d_2)\cap A_{i\pm 3}\neq \emptyset\text{ for some }i\in [7]\}$; 
		\item $W_4=\{d_1d_2\in \mathcal{K}_1\setminus (W_1\cup W_2\cup W_3):N(d_1)\cap B_i\neq \emptyset, N(d_2)\cap B_{i+1}\neq \emptyset\text{ for some }i\in [7]\}$; 
		\item $W_5=\mathcal{K}_1\setminus (W_1\cup W_2\cup W_3\cup W_4)$.   
	\end{itemize}

	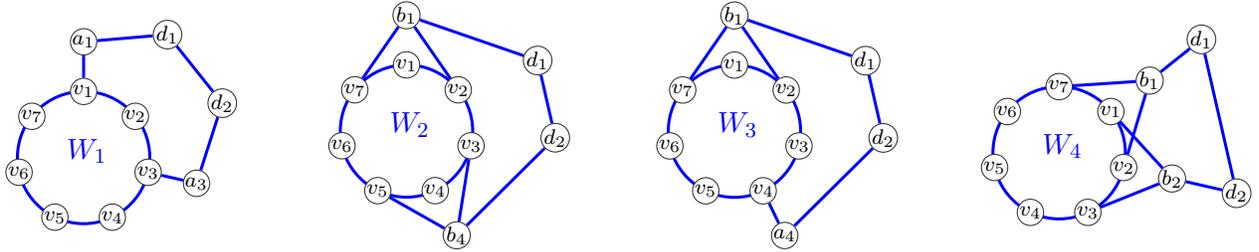
\begin{figure}[htbp]
		\centering		
		\begin{minipage}{.2\linewidth}
			\centering
			\begin{tikzpicture}[scale=.44]
				\draw[fill=none, line width=0.4mm, blue](0,0) circle (2) {};
				\draw[rotate=0] (0,2) node[circle, draw=black!80, inner sep=0mm, minimum size=3.5mm, fill=white] (a_{1}){\scriptsize $v_1$};
				\draw[rotate=-360/7] (0,2) node[circle, draw=black!80, inner sep=0mm, minimum size=3.5mm, fill=white] (a_{2}){\scriptsize $v_2$};
				\draw[rotate=-360/7*2] (0,2) node[circle, draw=black!80, inner sep=0mm, minimum size=3.5mm, fill=white] (a_{3}){\scriptsize $v_3$};
				\draw[rotate=-360/7*3] (0,2) node[circle, draw=black!80, inner sep=0mm, minimum size=3.5mm, fill=white] (a_{4}){\scriptsize $v_4$};
				\draw[rotate=-360/7*4] (0,2) node[circle, draw=black!80, inner sep=0mm, minimum size=3.5mm, fill=white] (a_{5}){\scriptsize $v_5$};
				\draw[rotate=-360/7*5] (0,2) node[circle, draw=black!80, inner sep=0mm, minimum size=3.5mm, fill=white] (a_{6}){\scriptsize $v_6$};
				\draw[rotate=-360/7*6] (0,2) node[circle, draw=black!80, inner sep=0mm, minimum size=3.5mm, fill=white] (a_{7}){\scriptsize $v_7$};
				
				
				\draw[rotate=0] (0,3.5) node[circle, draw=black!80, inner sep=0mm, minimum size=3.5mm, fill=white] (x_{1}){\scriptsize $a_1$};
				\draw[rotate=-240/7] (0,4.5) node[circle, draw=black!80, inner sep=0mm, minimum size=3.5mm, fill=white] (x_{2}){\scriptsize $d_1$};
				\draw[rotate=-240/7*2] (0,4.5) node[circle, draw=black!80, inner sep=0mm, minimum size=3.5mm, fill=white] (x_{3}){\scriptsize $d_2$};
				\draw[rotate=-240/7*3] (0,3.5) node[circle, draw=black!80, inner sep=0mm, minimum size=3.5mm, fill=white] (x_{4}){\scriptsize $a_3$};
				
				
				\draw[line width=0.4mm, blue] (a_{1}) to  (x_{1});
				\draw[line width=0.4mm, blue] (a_{3}) to  (x_{4});
				\draw[line width=0.4mm, blue] (x_{1}) to  (x_{2});
				\draw[line width=0.4mm, blue] (x_{2}) to  (x_{3});
				\draw[line width=0.4mm, blue] (x_{3}) to  (x_{4});
				
				\node[text=blue] at (.1,.2) {$W_1$};

			\end{tikzpicture}
			
		\end{minipage}
		\centering
		\hfill
		\begin{minipage}{.2\linewidth}

			\centering
			\begin{tikzpicture}[scale=.44]
				\draw[fill=none, line width=0.4mm, blue](0,0) circle (2) {};
				\draw[rotate=0] (0,2) node[circle, draw=black!80, inner sep=0mm, minimum size=3.5mm, fill=white] (a_{1}){\scriptsize $v_1$};
				\draw[rotate=-360/7] (0,2) node[circle, draw=black!80, inner sep=0mm, minimum size=3.5mm, fill=white] (a_{2}){\scriptsize $v_2$};
				\draw[rotate=-360/7*2] (0,2) node[circle, draw=black!80, inner sep=0mm, minimum size=3.5mm, fill=white] (a_{3}){\scriptsize $v_3$};
				\draw[rotate=-360/7*3] (0,2) node[circle, draw=black!80, inner sep=0mm, minimum size=3.5mm, fill=white] (a_{4}){\scriptsize $v_4$};
				\draw[rotate=-360/7*4] (0,2) node[circle, draw=black!80, inner sep=0mm, minimum size=3.5mm, fill=white] (a_{5}){\scriptsize $v_5$};
				\draw[rotate=-360/7*5] (0,2) node[circle, draw=black!80, inner sep=0mm, minimum size=3.5mm, fill=white] (a_{6}){\scriptsize $v_6$};
				\draw[rotate=-360/7*6] (0,2) node[circle, draw=black!80, inner sep=0mm, minimum size=3.5mm, fill=white] (a_{7}){\scriptsize $v_7$};
				
				
				\draw[rotate=0] (0,3.5) node[circle, draw=black!80, inner sep=0mm, minimum size=3.5mm, fill=white] (x_{1}){\scriptsize $b_1$};
				\draw[rotate=-360*6/35] (0,4.5) node[circle, draw=black!80, inner sep=0mm, minimum size=3.5mm, fill=white] (x_{2}){\scriptsize $d_1$};
				\draw[rotate=-360*9/35] (0,4.5) node[circle, draw=black!80, inner sep=0mm, minimum size=3.5mm, fill=white] (x_{3}){\scriptsize $d_2$};
				\draw[rotate=-360/7*3] (0,3.5) node[circle, draw=black!80, inner sep=0mm, minimum size=3.5mm, fill=white] (x_{4}){\scriptsize $b_4$};
				
				
				\draw[line width=0.4mm, blue] (a_{7}) to  (x_{1});
				\draw[line width=0.4mm, blue] (a_{2}) to  (x_{1});
				\draw[line width=0.4mm, blue] (a_{3}) to  (x_{4});
				\draw[line width=0.4mm, blue] (a_{5}) to  (x_{4});
				\draw[line width=0.4mm, blue] (x_{1}) to  (x_{2});
				\draw[line width=0.4mm, blue] (x_{2}) to  (x_{3});
				\draw[line width=0.4mm, blue] (x_{3}) to  (x_{4});
				
				\node[text=blue] at (.1,.2) {$W_2$};

			\end{tikzpicture}
			
		\end{minipage}
		\centering
		\hfill
		\begin{minipage}{.2\linewidth}

			\centering
			\begin{tikzpicture}[scale=.44]
				\draw[fill=none, line width=0.4mm, blue](0,0) circle (2) {};
				\draw[rotate=0] (0,2) node[circle, draw=black!80, inner sep=0mm, minimum size=3.5mm, fill=white] (a_{1}){\scriptsize $v_1$};
				\draw[rotate=-360/7] (0,2) node[circle, draw=black!80, inner sep=0mm, minimum size=3.5mm, fill=white] (a_{2}){\scriptsize $v_2$};
				\draw[rotate=-360/7*2] (0,2) node[circle, draw=black!80, inner sep=0mm, minimum size=3.5mm, fill=white] (a_{3}){\scriptsize $v_3$};
				\draw[rotate=-360/7*3] (0,2) node[circle, draw=black!80, inner sep=0mm, minimum size=3.5mm, fill=white] (a_{4}){\scriptsize $v_4$};
				\draw[rotate=-360/7*4] (0,2) node[circle, draw=black!80, inner sep=0mm, minimum size=3.5mm, fill=white] (a_{5}){\scriptsize $v_5$};
				\draw[rotate=-360/7*5] (0,2) node[circle, draw=black!80, inner sep=0mm, minimum size=3.5mm, fill=white] (a_{6}){\scriptsize $v_6$};
				\draw[rotate=-360/7*6] (0,2) node[circle, draw=black!80, inner sep=0mm, minimum size=3.5mm, fill=white] (a_{7}){\scriptsize $v_7$};
				
				
				\draw[rotate=0] (0,3.5) node[circle, draw=black!80, inner sep=0mm, minimum size=3.5mm, fill=white] (x_{1}){\scriptsize $b_1$};
				\draw[rotate=-360*6/35] (0,4.5) node[circle, draw=black!80, inner sep=0mm, minimum size=3.5mm, fill=white] (x_{2}){\scriptsize $d_1$};
				\draw[rotate=-360*9/35] (0,4.5) node[circle, draw=black!80, inner sep=0mm, minimum size=3.5mm, fill=white] (x_{3}){\scriptsize $d_2$};
				\draw[rotate=-360/7*3] (0,3.5) node[circle, draw=black!80, inner sep=0mm, minimum size=3.5mm, fill=white] (x_{4}){\scriptsize $a_4$};
				
				
				\draw[line width=0.4mm, blue] (a_{7}) to  (x_{1});
				\draw[line width=0.4mm, blue] (a_{2}) to  (x_{1});
				\draw[line width=0.4mm, blue] (a_{4}) to  (x_{4});
				\draw[line width=0.4mm, blue] (x_{1}) to  (x_{2});
				\draw[line width=0.4mm, blue] (x_{2}) to  (x_{3});
				\draw[line width=0.4mm, blue] (x_{3}) to  (x_{4});
				
				\node[text=blue] at (.1,.2) {$W_3$};

			\end{tikzpicture}
			
		\end{minipage}
		\centering
		\hfill
		\begin{minipage}{.2\linewidth}

			\centering
			\begin{tikzpicture}[scale=.44]
				\draw[fill=none, line width=0.4mm, blue](0,0) circle (2) {};
				\draw[rotate=-360/7] (0,2) node[circle, draw=black!80, inner sep=0mm, minimum size=3.5mm, fill=white] (a_{1}){\scriptsize $v_1$};
				\draw[rotate=-360/7*2] (0,2) node[circle, draw=black!80, inner sep=0mm, minimum size=3.5mm, fill=white] (a_{2}){\scriptsize $v_2$};
				\draw[rotate=-360/7*3] (0,2) node[circle, draw=black!80, inner sep=0mm, minimum size=3.5mm, fill=white] (a_{3}){\scriptsize $v_3$};
				\draw[rotate=-360/7*4] (0,2) node[circle, draw=black!80, inner sep=0mm, minimum size=3.5mm, fill=white] (a_{4}){\scriptsize $v_4$};
				\draw[rotate=-360/7*5] (0,2) node[circle, draw=black!80, inner sep=0mm, minimum size=3.5mm, fill=white] (a_{5}){\scriptsize $v_5$};
				\draw[rotate=-360/7*6] (0,2) node[circle, draw=black!80, inner sep=0mm, minimum size=3.5mm, fill=white] (a_{6}){\scriptsize $v_6$};
				\draw[rotate=-360/7*7] (0,2) node[circle, draw=black!80, inner sep=0mm, minimum size=3.5mm, fill=white] (a_{7}){\scriptsize $v_7$};
				
				
				\draw[rotate=-360/7] (0,3.5) node[circle, draw=black!80, inner sep=0mm, minimum size=3.5mm, fill=white] (x_{1}){\scriptsize $b_1$};
				\draw[rotate=-360/7] (0,5.5) node[circle, draw=black!80, inner sep=0mm, minimum size=3.5mm, fill=white] (x_{2}){\scriptsize $d_1$};
				\draw[rotate=-360/7*2] (0,5.5) node[circle, draw=black!80, inner sep=0mm, minimum size=3.5mm, fill=white] (x_{3}){\scriptsize $d_2$};
				\draw[rotate=-360/7*2] (0,3.5) node[circle, draw=black!80, inner sep=0mm, minimum size=3.5mm, fill=white] (x_{4}){\scriptsize $b_2$};
				
				
				\draw[line width=0.4mm, blue] (a_{7}) to  (x_{1});
				\draw[line width=0.4mm, blue] (a_{2}) to  (x_{1});
				\draw[line width=0.4mm, blue] (a_{1}) to  (x_{4});
				\draw[line width=0.4mm, blue] (a_{3}) to  (x_{4});
				\draw[line width=0.4mm, blue] (x_{1}) to  (x_{2});
				\draw[line width=0.4mm, blue] (x_{2}) to  (x_{3});
				\draw[line width=0.4mm, blue] (x_{3}) to  (x_{4});
				
				\node[text=blue] at (.1,.2) {$W_4$};

			\end{tikzpicture}
			
		\end{minipage}
		\caption{Illustration of $\mathcal{K}_1$ when $i=1$.}
		\label{fig1}
	\end{figure}

	\begin{claim}\label{K_2}
		We claim that 
		\begin{itemize}
			\item[(a)] there is a set $T_1^1$ with $|T_1^1|\leq 7$ such that $T_1^1$ dominates all components in $W_1$; 
			\item[(b)] there is a set $T_1^2$ with $|T_1^2|\leq 28$ such that $T_1^2$ dominates all components in $W_2$;
			\item[(c)] there is a set $T_1^3$ with $|T_1^3|\leq 42$ such that $T_1^3$ dominates all components in $W_3$;
			\item[(d)] $W_4=\emptyset$; 
			\item[(e)] $W_5=\emptyset$.   
		\end{itemize}
		Moreover, all the sets $T_1^1$, $T_1^2$ and $T_1^3$ can be constructed in $O(|V(G)|+|E(G)|)$ time. 
	\end{claim}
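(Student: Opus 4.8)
I would run a single case analysis driven by the forbidden induced subgraphs of a cleaned $G\in\mathcal G_{10,7}$, namely $C_3$, $C_5$, $C_9$ and $P_{10}$, together with three easy facts about a $2$-vertex component $d_1d_2$ of $G\setminus N[C]$: neither $d_1$ nor $d_2$ has a neighbour on $C$; $d_1$ and $d_2$ have no common neighbour (else a $C_3$); and Claim~\ref{edges in nieghbors of D}, which forces many edges among the $A$/$B$-neighbours of $d_1$ and of $d_2$. I would also use the order-$7$ rotational symmetry of $C$ throughout, so that it suffices to treat one representative index $i$ and multiply by $7$. The recurring device is this: if $d_1$ reaches $C$ through $x\in A\cup B$ and $d_2$ reaches $C$ through $y\in A\cup B$, then the ``bridge'' $x-d_1-d_2-y$ can be prolonged on each side into $C$ and closed up by an arc of $C$; choosing the arc so as to avoid the few $C$-vertices that would create a chord, and recalling that $C$ has length $7$, one almost always lands on an induced $C_9$ or $P_{10}$, which is impossible. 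The whole claim comes from pushing this through the five classes.

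\textbf{(d), (e): $W_4=W_5=\varnothing$.} For $W_4$, suppose $d_1d_2$ has $d_1\sim b_i\in B_i$, $d_2\sim b_{i+1}\in B_{i+1}$ and $d_1d_2\notin W_1\cup W_2\cup W_3$. Using $b_i\sim v_{i-1},v_{i+1}$ and $b_{i+1}\sim v_i,v_{i+2}$, I prolong the bridge to $v_{i-1}$ and $v_{i+2}$ and close with the arc $v_{i+2}-v_{i+3}-v_{i+4}-v_{i+5}-v_{i+6}$ (note $v_{i+6}=v_{i-1}$): this $9$-vertex closed walk is an induced $C_9$ unless a chord occurs, and the only chord not already excluded is $b_ib_{i+1}$, in which case the bridge $d_1-b_i-b_{i+1}-d_2$ gives an induced $C_9$ or $P_{10}$ the same way; the hypothesis $d_1d_2\notin W_1\cup W_2\cup W_3$ is used to forbid further $N(C)$-neighbours of $d_1,d_2$ that could spoil the argument. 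For $W_5$, being outside $W_1,\dots,W_4$ forces (via Claim~\ref{edges in nieghbors of D} and $C_3$/$C_5$-freeness) that the pair of index sets that $d_1,d_2$ meet lies in a short explicit list, and for each entry the bridge-plus-arc construction again yields a forbidden $C_9$ or $P_{10}$. Up to rotation this is a finite check.

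\textbf{(a), (b), (c): the dominating sets.} For $W_1$, I would group the endpoints of $W_1$-components by which $A_j$ they have a neighbour in; for each $j\in[7]$ I want a single vertex $t_j$ adjacent to all of them, so that $T_1^1:=\{t_1,\dots,t_7\}$ contains in its closed neighbourhood both endpoints of every $W_1$-component (each such component meets $A_j$ and $A_{j'}$ for some $j,j'$). To find $t_j$: fix one component $d_1^0d_2^0$ with $d_1^0\sim a^0\in A_j$ and try $t_j:=a^0$; for any other endpoint $d$ with $d\sim a\in A_j$ and partner $d'$, if $a^0\not\sim d$ then splicing the bridges of $\{d,d'\}$ and of $\{d_1^0,d_2^0\}$ along a suitable arc of $C$ produces an induced $P_{10}$ (or $C_9$) unless one of a bounded list of ``extra adjacencies'' holds — and each of those can be shown to force the required adjacency, possibly after re-routing the spliced path through the extra edge. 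For $W_2$ (endpoints meeting $B_i$ and $B_{i+3}$) and $W_3$ (endpoints meeting $B_i$ and $A_{i\pm3}$) the same scheme applies, but now each group splits into the $O(1)$ sub-patterns coming from which of the two $C$-neighbours of each $B$-vertex the bridge is prolonged through (and, for $W_3$, the sign $\pm$), accounting for the bounds $28$ and $42$. Each $T_1^\ell$ is built from a constant number of adjacency-list scans around a constant number of seed vertices, so the construction is $O(|V(G)|+|E(G)|)$.

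\textbf{Main obstacle.} The hard part will be (a)--(c): proving that one vertex per group (or per sub-pattern) truly touches every component. The difficulty is that several chords one would like to exclude in the spliced $P_{10}$/$C_9$ are not forbidden in general — for instance $b_i\sim b_{i+1}$ in the $W_4$ analysis, or an $A$-neighbour of one component being adjacent to a $d$-vertex of another — so every such configuration has to be dispatched individually, either by reading off the wanted dominating edge directly, or by rerouting through the extra edge to recover a forbidden induced subgraph, or by observing that the component already lies in an earlier class. Checking $W_5=\varnothing$ in (e) is similarly a somewhat long (but, given the symmetry, routine) enumeration of the surviving index-set pairs.
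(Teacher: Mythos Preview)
Your plan for (d) and (e) has a genuine gap that cannot be repaired along the lines you sketch. The 9-cycle you build for $W_4$,
\[
v_{i-1}-b_i-d_1-d_2-b_{i+1}-v_{i+2}-v_{i+3}-v_{i+4}-v_{i+5}-v_{i-1},
\]
\emph{always} has the chord $b_ib_{i+1}$: this is exactly what Claim~\ref{edges in nieghbors of D} forces (apply it with the roles of $d_1,d_2$ swapped, noting $b_{i+1}\in B_{(i)+1}$). So the ``unless a chord occurs'' case is the only case, and your fallback sentence ``the bridge $d_1-b_i-b_{i+1}-d_2$ gives an induced $C_9$ or $P_{10}$ the same way'' does not go through: every natural prolongation of that $4$-cycle into $C$ picks up the adjacencies $b_i\sim v_{i\pm1}$ or $b_{i+1}\sim v_i,v_{i+2}$ as chords. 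More fundamentally, $W_4=\varnothing$ and $W_5=\varnothing$ are \emph{not} purely structural facts about cleaned graphs in $\mathcal G_{10,7}$: a component with $N(d_1)\cap N(C)\subseteq B_i\cup A_{i+1}\cup B_{i+2}$ and $N(d_2)\cap N(C)\subseteq B_{i-1}\cup A_i\cup B_{i+1}$ is perfectly consistent with being $\{C_3,C_5,C_9,P_{10}\}$-free and with having no comparable pair. What kills it is that every vertex in $N(d_1)\cap N(C)$ is adjacent to $v_{i+1}$ and every vertex in $N(d_2)\cap N(C)$ is adjacent to $v_i$, so the component is \emph{reducible for $L$}, contradicting the standing hypothesis that $L$ is non-reducible. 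The paper's proof of (d) and (e) proceeds exactly this way: a neighbourhood analysis (using Claim~\ref{X_1 neighbor in B} and Claim~\ref{edges in nieghbors of D}) that ends either in a comparable pair or in a reducible component. Your forbidden-subgraph plan never invokes the non-reducible hypothesis, so it cannot succeed.

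For (a)--(c) your outline is in the right spirit but misses the concrete mechanisms the paper uses. Part (a) is cleaner than you suggest: one shows directly that any such $d_1$ is complete to $A_i$ (a single $P_{10}$), so any vertex of $A_i$ works. Parts (b) and (c) are where the real work lies, and the paper does not rely on splicing two bridges as you propose; instead it selects, among all components of $W_2^i$ (resp.\ $W_3^{i,\pm}$), one whose neighbourhood in $B_i$ is inclusion-minimal, takes a neighbour $b_i$ of that component, and then uses the no-comparable-pair hypothesis to produce an auxiliary vertex $w$ adjacent to $v_i$ but not to $b_i$. The dominating set consists of such $b_i$'s and $w$'s, and the verification that they dominate is a sequence of $P_{10}$ arguments exploiting the minimality choice. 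Your proposal mentions neither the minimality selection nor the no-comparable-pair trick, and the ``extra adjacencies'' you would have to dispatch in a naive two-bridge splice are exactly the ones these devices are designed to control.
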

	\begin{proof}
		(a) 
		Let $d_1d_2$ be an element in $W_1$ such that $d_1$ has a neighbor $a$ in $A_i$ and $d_2$ has a neighbor $a'$ in $A_{i+2}$.  
		If there is a vertex $a''$ in $A_i$ not adjacent to $d_1$, then $a''-v_i-a-d_1-d_2-a'-v_{i+2}-v_{i+3}-v_{i-3}-v_{i-2}$ is an induced $P_{10}$. 
		Hence, $d_1$ is complete to $A_i$ and by symmetry, $d_2$ is complete to $A_{i+2}$. 
		Let $T_1^1$ be a set consisting of an arbitrary vertex in $A_i$ for every $i\in[7]$.  
		Thus, $T_1^1$ dominates all components in $W_1$ and can be constructed in $O(|V(G)|+|E(G)|)$ time. 
		This completes the proof of (a). 
		
		(b) 
		For each $i\in [7]$, let $W_2^i$ be the set of the components in $W_2$ such that one of its vertices has a neighbor in $B_{i}$ and another has a neighbor in $B_{i+3}$. 
		Among all elements in $W_2^i$, let $d_1d_2$ be the component such that $N(d_1)\cap B_{i}$ is minimal and $d_1'd_2'$ be the component such that $N(d_2')\cap B_{i+3}$ is minimal. 
		Let $b_i\in B_i$ be a neighbor of $d_1$, $b_{i+3}\in B_{i+3}$ be a neighbor of $d_2$, $b_i'\in B_i$ be a neighbor of $d'_1$ and $b'_{i+3}\in B_{i+3}$ be a neighbor of $d_2'$. 
		Since $G$ has no comparable pair, there is a vertex $w_1$ adjacent to $v_{i+3}$ but $b_{i+3}$ and a vertex $w_2$ adjacent to $v_{i}$ but $b'_{i}$. 
		So $\{b_i,b'_{i+3},w_1,w_2\}$ can be constructed in $O(|V(G)|+|E(G)|)$ time. 
		We claim that $\{b_i,b'_{i+3},w_1,w_2\}$ dominates all components in $W_2^i$. 
		Suppose to the contrary that there is a vertex in a component $x_1x_2$, say $x_1$, in $W_2^i$ such that $x_1$ is not dominated by $\{b_i,b'_{i+3},w_1,w_2\}$. 
		By the minimality of $d_1$, $x_1$ has a neighbor $b_i''$ in $B_i$ not adjacent to $d_1$. 
		Since $x_2-x_1-b_i''-v_{i+1}-b_i-d_1-d_2-b_{i+3}-v_{i-3}-v_{i+3}$ is not an induced $P_{10}$, $x_2b_{i+3}\in E(G)$. 
		Since $x_1-b_i''-v_{i+1}-b_i-d_1-d_2-b_{i+3}-v_{i-3}-v_{i+3}-w_1$ is not an induced $P_{10}$, we have $w_1d_1\in E(G)$. 
		Then $v_{i-2}-v_{i-1}-b_i''-x_1-x_2-b_{i+3}-v_{i+2}-v_{i+3}-w_1-d_1$ is an induced $P_{10}$.  
		Thus, $\{b_i,b'_{i+3},w_1,w_2\}$ dominates all components of $W_2^i$. 
		For each $i\in [7]$, we obtain such four vertices and let $T_1^2$ be the union of these vertices. 
		It follows that $|T_1^2|\leq 28$, $T_1^2$ dominates all components of $W_2$ and could be constructed in $O(|V(G)|+|E(G)|)$ time.  
		This completes the proof of (b). 
		
		(c) 
		For each $i\in [7]$, let $W_3^{i,+}$ (or $W_3^{i,-}$, respectively) be the sets of the components in $W_3$ such that one of its vertices has a neighbor in $B_{i}$ and another has a neighbor in $A_{i+3}$ (or $A_{i-3}$, respectively).  
		Among all elements in $W_3^{i,+}$, let $d_1-d_2$ be the component such that $N(d_1)\cap B_{i}$ is minimal and $d_1'-d_2'$ be the component such that $N(d_2')\cap A_{i+3}$ is minimal. 
		Let $b_i\in B_i$ be a neighbor of $d_1$, $a_{i+3}\in A_{i+3}$ be a neighbor of $d_2$ and $a_{i+3}'\in A_{i+3}$ be a neighbor of $d_2'$. 
		Since $G$ has no comparable pair, there is a vertex $w_1$ adjacent to $v_i$ but $b_i$. 
		Thus, $\{b_i,a'_{i+3},w_1\}$ could be obtained in $O(|V(G)|+|E(G)|)$ time. 
		We claim that $\{b_i,a'_{i+3},w_1\}$ dominates all elements in $W_3^{i,+}$. 
		Suppose to the contrary that there is a vertex in a component $x_1-x_2$ in $W_3^{i,+}$ that is not dominated. 
		Assume first that $x_1$ is not dominated. 
		By the minimality of $d_1$, $x_1$ has a neighbor $b_i'$ in $B_i$ not adjacent to $d_1$. 
		Then $x_1-b_i'-v_{i+1}-b_i-d_1-d_2-a_{i+3}-v_{i+3}-v_{i-3}-v_{i-2}$ is an induced $P_{10}$. 
		Thus, we may assume that $x_2$ is not dominated and both $d_1'$ and $x_1$ are adjacent to $b_i$. 
		By the minimality of $d_2'$, $x_2$ has a neighbor $a''_{i+3}$ in $A_{i+3}$ that is not adjacent to $d_2'$. 
		Note that if $w_1\in B$, then $w_1$ is not adjacent to $d_2'$ and $x_2$. 
		Since $w_1-v_i-v_{i+1}-b_i-d_1'-d_2'-a_{i+3}'-v_{i+3}-a''_{i+3}-x_2'$ is not an induced $P_{10}$, $w_1\notin B_{i-1}$. 
		Thus, either $w_1\in A_i$ or $w_1\in B_{i+1}$. 
		If $w_1\in B_{i+1}$, then since $w_1-v_i-v_{i+1}-b_i-d_1'-d_2'-a_{i+3}'-v_{i+3}-v_{i-3}-v_{i-2}$ is not an induced $P_{10}$, $w_1a'_{i+3}\in E(G)$. 
		By symmetry, $w_1a''_{i+3}\in E(G)$, and hence, $x_2'-a''_{i+3}-w_1-a'_{i+3}-d_2'-d_1'-b_i-v_{i-1}-v_{i-2}-v_{i-3}$ is an induced $P_{10}$.  
		So, we may assume that $w_1\in A_i$. 
		Since $w_1-v_i-v_{i+1}-b_i-x_1-x_2-a_{i+3}''-v_{i+3}-v_{i-3}-v_{i-2}$ is not an induced $P_{10}$, $w_1a''_{i+3}\in E(G)$. 
		Since $w_1-v_i-v_{i+1}-b_i-d_1'-d_2'-a_{i+3}'-v_{i+3}-v_{i-3}-v_{i-2}$ is not an induced $P_{10}$, $w_1a'_{i+3}\in E(G)$ or $w_1d_2'\in E(G)$. 
		If $w_1a'_{i+3}\in E(G)$, then $x_2'-a''_{i+3}-w_1-a'_{i+3}-d_2'-d_1'-b_i-v_{i-1}-v_{i-2}-v_{i-3}$ is an induced $P_{10}$. 
		So, $w_1d_2'\in E(G)$.  
		Then $w_1-d_2'-a'_{i+3}-v_{i+3}-a''_{i+3}-w_1$ is an induced $C_5$. 
		Thus, we show that $\{b_i,a'_{i+3},w_1\}$ dominates all elements of $W_3^{i,+}$. 
		By symmetry, there are another three vertices that dominate all elements of $W_3^{i,-}$.
		For each $i\in [7]$, we obtain such 6 vertices, and let $T_1^3$ be the union of these vertices. 
		It follows that $|T_1^3|\leq 42$, $T_1^3$ dominates all elements $W_3$ and could be constructed in $O(|V(G)|+|E(G)|)$ time.
		This completes the proof of (c).  
		
		(d) 
		By the definition of $W_4$, $W_4\cap (W_1\cup W_2\cup W_3)=\emptyset$. 
		Thus, $d_1$ has no neighbors in $A_{i-2}\cup A_{i-3}\cup B_{i-2}\cup B_{i-3}$. 
		By Claim \ref{X_1 neighbor in B}, $N(d_1)\cap B\subseteq B_{i}\cup B_{i+2}$. 
		On the other hand, since $G$ has no induced $C_3,C_5,C_9$, $N(d_1)\cap A\subseteq A_{i+1}$ or $N(d_1)\cap A\subseteq A_{i-1}\cup A_{i+3}$. 
		Thus, either $N(d_1)\cap N(C)\subseteq A_{i-1}\cup B_{i}\cup B_{i+2}\cup A_{i+3}$ or $N(d_1)\cap N(C)\subseteq B_{i}\cup A_{i+1}\cup B_{i+2}$.
		If $N(d_1)\cap N(C)\subseteq A_{i-1}\cup B_{i}\cup B_{i+2}\cup A_{i+3}$, then by the Claim \ref{edges in nieghbors of D}, $d_1$ and $b_{i+1}$ are comparable, where $b_{i+1}$ is a neighbor of $d_2$ in $B_{i+1}$. 
		Thus, $N(d_1)\cap N(C)\subseteq B_{i}\cup A_{i+1}\cup B_{i+2}$, and $N(d_2)\cap N(C)\subseteq B_{i-1}\cup A_{i}\cup B_{i+1}$ by symmetry. 
		Hence, $d_1-d_2$ is a reducible component for $L$, a contradiction. 
		This completes the proof of (d).  
		
		(e)
		Let $d_1d_2$ be an element of $W_5$. 
		Suppose first that $d_1$ has a neighbor $b$ in $B_i$. 
		Then by the definition of $W_5$, $N(d_2)\cap N(C)\subseteq A_{i}\cup A_{i\pm 2}$. 
		If $d_2$ has a neighbor in $A_{i+2}$, then $N(d_2)\cap N(C)\subseteq A_{i\pm 2}$, and hence, by Claim \ref{edges in nieghbors of D}, $b$ and $d_2$ are comparable. 
		If $N(d_2)\cap N(C)\subseteq A_{i}$, then either $N(d_1)\cap N(C)\subseteq A_{i+1}$ or $N(d_1)\cap N(C)\subseteq A_{i-1}$. 
		Then $d_1d_2$ is a reducible component. 
		Thus, we may assume that both $d_1$ and $d_2$ has no neighbors in $B$. 
		Let $a\in A_i$ be a neighbor of $d_1$. 
		Then $N(d_2)\cap N(C)\subseteq A_{i-3}\cup A_{i+1}$ or $N(d_2)\cap N(C)\subseteq A_{i-3}\cup A_{i+1}$, say $N(d_2)\cap N(C)\subseteq A_{i-3}\cup A_{i+1}$. 
		Hence, $a$ and $d_2$ are comparable by Claim \ref{edges in nieghbors of D}. 
		This completes the proof of (e).  
	\end{proof}
	
	Let $\mathcal{K}_2$ be the set of components in $G\setminus N[C]$ that has order at least 3. 
    We distinguish $\mathcal{K}_2$ as following:  
		
		\begin{itemize}
			\item $Q_1^i:=\{(U_1,U_2)\in \mathcal{K}_2:\text{there is }x,y\in U_1\text{ such that }N(x)\cap B_i\neq \emptyset\text{ and }N(y)\cap B_{i+1}\neq \emptyset\}$; 
			\item $Q_2^i:=\{(U_1,U_2)\in \mathcal{K}_2\setminus (\bigcup_{i\in[7]} Q_1^i):\text{there is }x,y\in U_1\text{ such that }N(x)\cap B_i\neq \emptyset\text{ and }N(y)\cap B_{i+3}\neq \emptyset\}$; 
			\item $Q_3^i:=\{(U_1,U_2)\in \mathcal{K}_2\setminus ((\bigcup_{i\in[7]} (Q_1^i\cup Q_2^i)):\text{there is }x,y\in U_1\text{ such that }N(x)\cap B_i\neq \emptyset\text{ and }N(y)\cap B_{i+2}\neq \emptyset\}$; 
			\item $Q_4=\mathcal{K}_2\setminus (\bigcup_{i\in[7]} (Q_1^i\cup Q_2^i\cup Q_3^i))$.  
		\end{itemize}

		Note that by Claim \ref{P_4 neighbor}, for any two non-adjacent vertices in a element of $\mathcal{K}_2$, their neighborhoods in $A$ are the same. 
		Let $Q_1=\bigcup_{i\in[7]} Q_1^i$, $Q_2=\bigcup_{i\in[7]} Q_2^i$ and $Q_3=\bigcup_{i\in[7]} Q_3^i$. 
		For an element $(U_1,U_2)$ of $Q_1\cup Q_2\cup Q_3$, we call $U_1$ the {\em signed part} and $U_2$ the {\em unsigned part} of this element.

		\begin{claim}\label{Q_1}
			There is a set $T_2^1$ with $|T_2^1|\leq 21$ such that $T_2^1$ dominates all elements of $Q_1$, and can be obtained in $O(|V(G)|+|E(G)|)$ time. 
		\end{claim}
		\begin{proof}
			Let $K=(U_1,U_2)$ be an element of $Q_1^i$ with $x,y\in U_1$ such that $x$ has a neighbor $b_i\in B_i$ and $y$ has a neighbor $b_{i+1}\in B_{i+1}$. 
			Let $z$ be a vertex in $U_2$ with a neighbor $v\in N(C)$. 
			If there is a vertex $u\in U_1$ that is not adjacent to $b_i$ and $b_{i+1}$, then $u-z-x-b_i-v_{i-1}-v_{i-2}-v_{i-3}-v_{i+3}-v_{i+2}-b_{i+1}$ is an induced $P_{10}$. 
			
			\vspace{0.2cm}
			\noindent (1) All vertices in $B_i$ that is not adjacent to $x$ are complete to $b_{i+1}$ and all vertices in $B_{i+1}$ that is not adjacent to $y$ are complete to $b_i$. 
			\vspace{0.2cm}
			
			By symmetry, we may assume that there is a vertex $b'$ in $B_i$ that is not adjacent to $x$ and $b_{i+1}$. 
			Then $x-z-y-b_{i+1}-v_{i+2}-v_{i+3}-v_{i-3}-v_{i-2}-v_{i-1}-b'$ is an induced $P_{10}$, a contradiction.
			This completes the proof of (1). 
			
			\vspace{0.2cm}
			
			Let $K'=(U_1',U_2')$ be another element of $Q_1^i$ such that there are $x',y'\in U'_1$ and $z'\in U_2'$ with $x'$ has a neighbor in $B_i$ and $y'$ has a neighbor in $B_{i+1}$. 
			
			\vspace{0.2cm}
			\noindent (2) The neighborhoods of $x$ and $x'$ in $B_i$ are either the same or disjoint. 
			If the former, then the neighborhoods of $y$ and $y'$ in $B_{i+1}$ are also the same, and if the later, then the neighborhoods of $y$ and $y'$ in $B_{i+1}$ are also disjoint. 
			\vspace{0.2cm}
			
			If the neighborhoods of $x$ and $x'$ in $B_{i}$ are different, say $b_i$ is not adjacent to $x'$, then all neighbors of $y'$ in $B_{i+1}$ are complete to $b_1$ by (1).  
			So the neighborhoods of $y$ and $y'$ in $B_{i+1}$ are disjoint. 
			And hence, the neighborhoods of $x$ and $x'$ in $B_i$ are disjoint by symmetry. 
			This completes the proof of (2). 
			
			\vspace{0.2cm}
			\noindent (3) The neighborhoods of $U_1$ and $U_1'$ in $B_i$ are the same.
			\vspace{0.2cm}
			
			Suppose to the contrary that the neighborhoods of $U_1$ and $U_1'$ in $B_i$ are disjoint. 
			Let $b_i'$ be a neighbor of $x'$ in $B_i$ and $b_{i+1}'$ be a neighbor of $y'$ in $B_{i+1}$. 
			By (1), $b_i'b_{i+1},b_ib_{i+1}'\in E(G)$. 
			Assume first that $v\in B$.  
			It follows that $v\in B_{i-3}$. 
			If $z'$ is not adjacent to $v$, then $y'-z'-x'-b_i'-v_{i+1}-b_i-x-z-v-v_{i+3}$ is an induced $P_{10}$. 
			So, $z'$ is adjacent to $v$. 
			Then $y-z-v-z'-x'-b_i'-v_{i-1}-b_i-b_{i+1}'-v_{i+2}$ is an induced $P_{10}$. 
			So we may assume that $v\in A$. 
			It follows that $v\in A_{i-2}\cup A_{i-3}\cup A_{i+3}$. 
			If $v\in A_{i-3}$, then $x'-b_i'-v_{i-1}-b_i-x-z-v-v_{i-3}-v_{i+3}-v_{i+2}$ is an induced $P_{10}$. 
			So, $v$ is in $A_{i+3}\cup A_{i-2}$, say $A_{i+3}$. 
			Then $x'-b_i'-v_{i+1}-b_i-x-z-v-v_{i+3}-v_{i-3}-v_{i-2}$ is an induced $P_{10}$. 
			Thus, $z$ has no neighbors in $N(C)$, a contradiction. 
			This completes the proof of (3). 
			
			\vspace{0.2cm}
			\noindent (4) The neighborhoods of $U_2$ and $U_2'$ in $N(C)$ are the same.
			\vspace{0.2cm}
			
			By symmetry, we only need to prove $N(z)\cap N(C)\subseteq N(z')\cap N(C)$. 
			We assume first that $z$ has a neighbor $b$ in $B$ that is not adjacent to $z'$. 
			It follows that $b\in B_{i-3}$. 
			Then $v_{i}-b_{i+1}-y'-z'-x'-b_i-x-z-b-v_{i+3}$ is an induced $P_{10}$.  
			So we may assume that $z$ has a neighbor $a$ in $A$ that is not adjacent to $z'$. 
			It follows that $a\in A_{i+3}\cup A_{i-3}\cup A_{i-2}$. 
			If $a\in A_{i-3}$, then $b_i-x'-z'-y'-b_{i+1}-y-z-a-v_{i-3}-v_{i+3}$ is an induced $P_{10}$. 
			If $a$ is in $A_{i+3}\cup A_{i-2}$, say $A_{i+3}$, then $y'-z'-x'-b_i-x-z-a-v_{i+3}-v_{i-3}-v_{i-2}$ is an induced $P_{10}$. 
			This completes the proof of (4). 
			
			\vspace{0.2cm}
			By the arbitrariness of $K'$, we have that $\{b_i,b_{i+1},v\}$ dominates all elements of $Q_1^i$. 
			For each $i\in [7]$, we obtain such three vertices, and let $T_2^1$ be the union of these vertices for all $i\in [7]$. 
			So, it follows that $|T_2^1|\leq 21$ and $T_2^1$ dominates all elements of $Q_1$.  
			This completes the proof of Claim \ref{Q_1}.  
		\end{proof}

		\begin{claim}\label{Q_2}
			There is a set $T_2^2$ with $|T_2^2|\leq 14$ such that $T_2^2$ dominates all elements of $Q_2$, and can be obtained in $O(|V(G)|+|E(G)|)$ time. 
		\end{claim}
		\begin{proof}
			Let $K=(U_1,U_2)$ be an element of $Q_2^i$ such that there are $x,y\in U_1,z\in U_2$ with $x$ has a neighbor $b_i$ in $B_i$ and $y$ has a neighbor $b_{i+3}$ in $B_{i+3}$. 
			Since $G$ has no comparable pair, $v_i$ has a neighbor $v$ not adjacent to $b_i$.

			\vspace{0.2cm}
			\noindent (1) $v\in A_i$ and $v$ is complete to $U_2$.  
			\vspace{0.2cm}

			Since $v-v_{i}-v_{i-1}-b_i-x-z-y-b_{i+3}-v_{i-3}-v_{i+3}$ is not an induced $P_{10}$, $v$ is adjacent to $y$ or $z$. 
			Note that $v\in B_{i-1}\cup A_{i}\cup B_{i+1}$. 
			If $v\in B_{i-1}$, by Claim \ref{X_1 neighbor in B}, $vz\in E(G)$, and hence, $x-z-v-v_{i-2}-v_{i-3}-v_{i+3}-v_{i+2}-v_{i+1}-b_i-x$ is an induced $C_9$. 
			If $v\in B_{i+1}$, then $vy\in E(G)$, and hence, $K\in Q_1$. 
			So, we may assume that $v\in A_i$. 
			If $v$ is adjacent to $y$, then $vx\in E(G)$, and hence, $x-v-v_i-v_{i-1}-b_i-x$ is an induced $C_5$. 
			So, by the arbitrariness of $z$, $v$ is complete to $U_2$.
			This completes the proof of (1).

			\vspace{0.2cm}
			\noindent (2) $|Q_2^i|=1$. 
			\vspace{0.2cm}
			
			Suppose to the contrary that there is another element $K'=(U_1',U_2')$ of $Q_2^i$ such that there are $x',y'\in U'_1$ and $z'\in U_2'$ with $x'$ has a neighbor in $B_i$ and $y'$ has a neighbor in $B_{i+3}$. 
			Assume first that the neighborhoods of $x$ and $x'$ in $B_{i}$ are incomparable. 
			So, we may assume that $b_i$ is not adjacent to $x'$ and $x'$ has a neighbor $b_i'\in B_i$ not adjacent to $x$. 
			Then $z'-x'-b_i'-v_{i-1}-b_i-x-z-y-b_{i+3}-v_{i-3}$ is an induced $P_{10}$. 
			Thus, the neighborhoods of $x$ and $x'$ in $B_{i}$ are comparable. 
			By symmetry, we may assume that $x'-b_i$ and $y'-b_{i+3}$. 
			Then $x'-b_i-v_{i-1}-v_i-v-z-y-b_{i+3}-v_{i-3}-v_{i+3}$ is an induced $P_{10}$. 
			This completes the proof of (2). 
			
			\vspace{0.2cm}
			
			So, $\{x,z\}$ dominates all elements of $Q_2^i$ by (2).  
			For each $i\in [7]$, we obtain such two vertices, and let $T_2^2$ be the union of these vertices for all $i\in [7]$. 
			Then $|T_2^2|\leq 14$ and $T_2^2$ dominates all elements of $Q_2$.  
			This completes the proof of Claim \ref{Q_2}.  
		\end{proof}

		\begin{claim}\label{Q_3}
			One can reduce $L$ to obtain a subpalette $L'$ of $L$ in $O(|V(G)|+|E(G)|)$ time such that $|L'(v)|\leq 2$ for every $v$ in a element of $Q_3$. Moreover, $(G,L,Z)$ is colorable if and only if $(G,L',Z)$ is colorable. 
		\end{claim}
		\begin{proof}
			Note that for each element of $Q_3^i$, all neighbors of its signed part in $B$ are in $B_i\cup B_{i+2}$. 
			
			\vspace{0.2cm}
			\noindent (1) For each element of $Q_3^i$, all vertices in its signed part have the same neighborhood in $B_i$ or $B_{i+2}$. 
			\vspace{0.2cm}
			
			Let $K=(U_1,U_2)\in Q_3^i$ such that there are $x,y\in U_1$ with $x$ has a neighbor $b_i$ in $B_i$ and $y$ has a neighbor $b_{i+2}$ in $B_{i+2}$. 
			Since $G$ has no comparable pair, we may assume that $x$ and $y$ have different neighborhoods in $B$. 
			Without loss of generality, we may assume that $b_i$ is not adjacent to $y$. 
			Since $G$ has no induced $C_9$, $b_{i+2}x\in E(G)$. 
			By the arbitrariness of $b_{i+2}$, $N(y)\cap B_{i+2}\subseteq N(x)\cap B_{i+2}$. 
			Since $G$ has no comparable pair again, $y$ has a neighbor $b_i'$ in $B_{i}$ that is not adjacent to $x$. 
			By symmetry, $N(x)\cap B_{i+2}\subseteq N(y)\cap B_{i+2}$. 
			Suppose to the contrary that there is a vertex $w\in U_1$ such that $N(w)\cap B_{i+2}\neq N(x)\cap B_{i+2}$. 
			We first assume that $N(x)\cap B_{i+2}\subseteq N(w)\cap B_{i+2}$
			Then since $G$ has no comparable pair, $w$ has a neighbor $w'$ in $B_{i+2}$ not adjacent to $x$. 
			By symmetry, $w$ is complete tox  $N(x)\cap B_i$ and $N(y)\cap B_i$, which is a contradiction since $N(x)\subseteq N(w)$. 
			So we may assume that there is a vertex in $N(x)\cap B_{i+2}$ that is not adjacent to $w$. 
			By symmetry, $N(x)\cap B_i=N(w)\cap B_i=N(y)\cap B_i$, a contradiction.  
			This completes the proof of (1). 
			
			\vspace{0.2cm}
			
			Let $Q_3^{-,i}$(or $Q_3^{+,i}$, respectively) be the set of elements in $Q_3$ such that all neighbors of its signed part have the same neighborhood in $B_i$(or $B_{i+2}$, respectively). 
			Let $K=(U_1,U_2)\in Q_3^{+,i}$ with $z\in U_2$. 
			Note that $N(U_2)\cap N(C)\subseteq A_i\cup A_{i-3}\cup B_{i+1}\cup B_{i+3}$, $N(U_2)\cap N(C)\subseteq A_{i+2}\cup B_{i+1}\cup B_{i+3}$, $N(U_2)\cap N(C)\subseteq A_i\cup B_{i-1}\cup B_{i+1}$ or $N(U_2)\cap N(C)\subseteq A_{i-2}\cup A_{i+2}\cup B_{i-1}\cup B_{i+1}$. 
			If $N(U_2)\cap N(C)\subseteq A_i\cup A_{i-3}\cup B_{i+1}\cup B_{i+3}$, then by Claim \ref{edges in nieghbors of D}, $z$ and $b_{i+2}$ are comparable. 
			
			\vspace{0.2cm}
			\noindent (2) If $L(v_{i+1})\neq L(v_{i+3})$, then one can reduce $L$ to obtain a subpalette $L'$ of $L$ in $O(|V(G)|+|E(G)|)$ time such that $|L'(v)|\leq 2$ for every $v\in K$. Moreover, $(G,L,Z)$ is colorable if and only if $(G,L',Z)$ is colorable. 
			\vspace{0.2cm}
			
			Without loss of generality, we may assume that $L(v_{i+1})=\{1\}$ and $L(v_{i+3})=\{2\}$. 
			Since $L$ is updated, $L(v)=\{3\}$ for all $v\in B_{i+2}$, and hence $|L(v)|\leq 2$ for every $v$ in $U_1$. 
			We first assume that $N(U_2)\cap N(C)\subseteq A_{i+2}\cup B_{i+1}\cup B_{i+3}$. 
			Since $L(v_{i+2})=\{3\}$, $3\notin L(v)$ for every $v$ in the neighborhood of every vertex in $U_2$, a contradiction. 
			We then assume that $N(U_2)\cap N(C)\subseteq A_i\cup B_{i-1}\cup B_{i+1}$ or $N(U_2)\cap N(C)\subseteq A_{i-2}\cup A_{i+2}\cup B_{i-1}\cup B_{i+1}$. 
			Let $b_i$ be a vertex in $B_i$ that has a neighbor $x$ in $U_1$. 
			For any 3-coloring $c$ of $(G,L,Z)$, either $c(b_{i})=2$ or $c(b_{i})=3$. 
			If $c(b_{i})=3$, then by Claim \ref{edges in nieghbors of D}, $3\notin L(v)$ for every $v$ in the neighborhood of every vertex in $U_2$. 
			Hence, there is a coloring of $(G,L,Z)$ such that $c'(v)=3$ for every $v\in U_2$ and $c'(v)=c(v)$ for every remaining vertex $v$. 
			If$c(b_{i})=2$, then $c(x)=1$, and hence, $c(v)\neq 1$ for every $v\in U_2$. 
			Define $L'$ by setting $L'(v)=L(v)\setminus \{1\}$ for every $v\in U_2$ and leaving $L'(u)=L(u)$ for every remaining vertex $u$. 
			Then $|L'(v)|\leq 2$ for every $v\in K$, can be obtained in $O(|V(G)|+|E(G)|)$ time and $(G,L,Z)$ is colorable if and only if $(G,L',Z)$ is colorable.
			This completes the proof of (2).

			\vspace{0.2cm}
			\noindent (3) If $L(v_{i+1})=L(v_{i+3})$, then one can reduce $L$ to obtain a subpalette $L'$ of $L$ in $O(|V(G)|+|E(G)|)$ time such that $|L'(v)|\leq 2$ for every $v\in K$. Moreover, $(G,L,Z)$ is colorable if and only if $(G,L',Z)$ is colorable. 
			\vspace{0.2cm}
			
			Without loss of generality, we may assume that $L(v_{i+1})=L(v_{i+3})=\{1\}$ and $L(v_{i-1})\neq \{3\}$. 
			Thus, either $L(v_{i-1})=\{1\}$ or $L(v_{i-1})=\{2\}$
			Suppose first that $L(v_{i-1})=\{1\}$. 
			If $N(U_2)\cap N(C)\subseteq A_{i+2}\cup B_{i+1}\cup B_{i+3}$ or $N(U_2)\cap N(C)\subseteq A_i\cup B_{i-1}\cup B_{i+1}$, then $K$ is a reducible component for $L$, a contradiction.  
			So we may assume that $N(U_2)\cap N(C)\subseteq A_{i-2}\cup A_{i+2}\cup B_{i-1}\cup B_{i+1}$.
			If $L(v_{i-2})=L(v_{i+2})=\{1\}$, then $K$ is also a reducible component for $L$, a contradiction.  
			So we may assume that $L(v_{i-2})\neq L(v_{i+2})$. 
			Let $z$ be a vertex in $U_2$. 
			Note that for any coloring $c$ of $(G,L,Z)$, $c(z)\neq 1$. 
			Define $L'$ by setting $L'(v)=1$ for every $v\in U_1$, $L'(v)=L(v)\setminus \{1\}$ for every $v\in U_2$ and leaving $L'(v)=L(v)$ for every remaining vertex $v$. 
			Then $|L'(v)|\leq 2$ for every $v\in K$, can be obtained in $O(|V(G)|+|E(G)|)$ time and $(G,L,Z)$ is colorable if and only if $(G,L',Z)$ is colorable.  
			
			We then suppose that $L(v_{i-1})=\{2\}$. 
			Since $L$ is updated, $L(v)=\{3\}$ for all $v\in B_i$, and hence $c(v)\neq 3$ for all $v\in U_1$.
			If $N(U_2)\cap N(C)\subseteq A_i\cup B_{i-1}\cup B_{i+1}$, $N(U_2)\cap N(C)\subseteq A_{i-2}\cup A_{i+2}\cup B_{i-1}\cup B_{i+1}$ or $N(U_2)\cap N(C)\subseteq A_{i+2}\cup B_{i+1}\cup B_{i+3}$ and $L(v_{i+2})=\{3\}$, then by Claim \ref{edges in nieghbors of D}, $3\notin L(v)$ for all $v$ in the neighborhood of $U_2$ and hence, every vertex in $U_2$ is a reducible vretex for $L$, a contradiction. 
			So we may assume that $N(U_2)\cap N(C)\subseteq A_{i+2}\cup B_{i+1}\cup B_{i+3}$ and $L(v_{i+2})=\{2\}$. 
			Let $z\in U_2$. 
			Note that $3\notin L(v)$ for every $v\in N(z)\cap A_{i+2}\cup B_{i+1}$ by Claim \ref{edges in nieghbors of D}. 
			It follows that if $z$ has no neighbors in $B_{i+3}$, $z$ is a reducible vertex, a contradiction. 
			Let $b\in B_{i+3}$ be a neighbor of $z$. 
			Note that for any coloring $c$ of $(G,L,Z)$, either $c(b)=1$ or $c(b)=3$. 
			If $c(b)=3$, then all neighbors of $U_1$ in $B_{i+2}$ are colored 2, and hence, $c(v)=1$ for every $v\in U_1$. 
			Thus, no matter $c(b)=1$ or $c(b)=3$, $c(z)\neq 1$. 
			Define $L'$ by setting $L'(v)=L(v)\setminus \{1\}$ for every $v\in U_2$ and leaving $L'(v)=L(v)$ for every remaining vertex $v$. 
			Then $|L'(v)|\leq 2$ for every $v\in K$, can be obtained in $O(|V(G)|+|E(G)|)$ time and $(G,L,Z)$ is colorable if and only if $(G,L',Z)$ is colorable.  
			This completes the proof of (3). 
			
			\vspace{0.2cm}
			
			By applying (2) and (3) for every element of $Q_3$, we complete the proof of Claim \ref{Q_3}. 
		\end{proof}

		Note that for each element $K=(U_1,U_2)\in Q_4$, if $|U_j|\neq 1$, $N(U_j)\cap B\subseteq B_i$ for some $i\in [7]$, where $j=1,2$. 
		Without loss of generality, we may assume that $|U_1|\geq |U_2|$ and call $U_1$ its {\em signed part}, $U_2$ its {\em unsigned part}  as well.

		\begin{claim}\label{Q_4}
			There is a set of restrictions $\mathcal{R}$ of $(G,L,Z)$ with size in $O(1)$, such that $(G,L,Z)$ is colorable if and only if $\mathcal{R}$ is colorable, $\mathcal{R}$ can be obtained in $O(|V(G)|+|E(G)|)$ time and there are no vertices in every element of $Q_4$ have list length 3. 
		\end{claim}
		\begin{proof}
			Let $Q_4^i$ be the set of elements of $Q_4$ such that its signed part has neighbors in $B_i$, $Q_4^{i,1,+}$ (or $Q_4^{i,1,-}$, respectively) be the set of elements of $Q_4^i$ such that its unsigned part has neighbors in $A_{i+3}$ (or $A_{i-3}$, respectively). 
			Set $Q_4^{i,1}=Q_4^{i,1,+}\cup Q_4^{i,1,-}$. 
			
			\vspace{0.2cm}
			\noindent (1) There exist 8 vertices that dominate every element of $Q_4^{i,1}$, and these vertices can be obtained in $O(|V(G)|+|E(G)|)$ time. 
			\vspace{0.2cm}
			
			By symmetry, we only need to prove there are 4 vertices that dominate all elements of $Q_4^{i,1,+}$. 
			Suppose that there are two elements $K:=(U_1,U_2),K':=(U_1',U_2')$ of $Q_4^{i,1,+}$ such that the neighborhoods of $U_2$ and $U_2'$ in $A_{i+3}$ are incomparable. 
			Let $x,y\in U_1$, $z\in U_2$, $x',y'\in U_1'$, $z'\in U_2'$ with $x$ has a neighbor $b_i$ in $B_i$ not adjacent to $y$, $y$ has a neighbor $b_i'$ in $B_i$ not adjacent to $x$, $z$ has a neighbor $a$ in $A_{i+3}$. 
			Since the neighborhoods of $U_2$ and $U_2'$ in $A_{i+3}$ are incomparable, we may assume that $a$ is not adjacent to $z'$ and $z'$ has a neighbor $a'\in A_{i+3}$ not adjacent to $z$. 
			Then since $x'-z'-a'-v_{i+3}-a-z-x-b_i-v_{i-1}-v_{i-2}$ is not an induced $P_{10}$, $x'b_i\in E(G)$. 
			By the arbitrariness of $b_i$, $N(x)\cap B_i\subseteq N(x')$. 
			Then by the symmetry of $x$ and $x'$, $x$ and $y$, $N(x)\cap B_i=N(x')\cap B_i=N(y)\cap B_i$, a contradiction. 
			Thus, all neighborhoods of the vertices in unsigned parts of different elements of $Q_4^{i,1,+}$ are comparable in $A_{i+3}$. 
			So, there is a vertex $a$ in $A_{i+3}$ such that $a$ adjacent to all vertices in the unsigned parts of all elements of $Q_4^{i,1,+}$. 
			
			We then assume that there are two elements $K:=(U_1,U_2),K':=(U_1',U_2')$ of $Q_4^{i,1,+}$ such that there are two vertices $x\in U_1$, $x\in U_1'$ with the neighborhoods of $x$ and $x'$ in $B_i$ are incomparable. 
			Let $z\in U_2$ and $b_i,b_i'$ in $B_i$ such that $b_i$ is adjacent to $x$ but $x'$, $b_i'$ is adjacent to $x'$ but $x$. 
			Then $x'-b_i'-v_{i+1}-b_i-x-z-a-v_{i+3}-v_{i-3}-v_{i-2}$ is an induced $P_{10}$. 
			Thus, all neighborhoods of the vertices in signed parts of different elements of $Q_4^{i,1,+}$ are comparable in $B_{i}$.
			So, there is a vertex $b$ in $B_{i}$ such that $b$ adjacent to all vertices in signed parts of all elements of $Q_4^{i,1,+}\setminus K_1$ except one, say $K_1$. 
			Let $x_1y_1$ be an edge in $K_1$. 
			Then $\{a,b,x_1,y_1\}$ dominates all elements of $Q_4^{i,1,+}$ and can be obtained in $O(|V(G)|+|E(G)|)$ time. 
			This completes the proof of (1).

			\vspace{0.2cm}
			
			Let $Q_4^{i,2,+}$ (or $Q_4^{i,2,-}$, respectively) be the set of all elements of $Q_4^i\setminus Q_4^{i,1}$ such that its unsigned part has neighbors in $B_{i+3}$ (or $B_{i-3}$, respectively). 
			Set $Q_4^{i,2}=Q_4^{i,2,+}\cup Q_4^{i,2,-}$.
			
			\vspace{0.2cm}
			\noindent (2) There exist 12 vertices that dominate $Q_4^{i,2}$, and these 12 vertices can be found in $O(|V(G)|+|E(G)|)$ time. 
			\vspace{0.2cm}
			
			By symmetry, we only need to prove there are 6 vertices that dominate all elements of $Q_4^{i,2,+}$. 
			Suppose that there are two elements $K:=(U_1,U_2),K':=(U_1',U_2')$ of $Q_4^{i,2,+}$ such that there are two vertices $z\in U_2,z'\in U_2'$ with incomparable neighborhoods in $B_{i+3}$. 
			Let $x\in U_1,x'\in U_1',b_{i}\in B_{i}$ and $b_{i+3},b_{i+3}'\in B_{i+3}$ with $xb_i\in E(G)$ and $b_{i+3}$ is adjacent to $z$ but $z'$, $b_{i+3}'$ is adjacent to $z'$ but $z$. 
			Then since $v_{i-2}-v_{i-1}-b_i-x-z-b_{i+3}-v_{i+2}-b_{i+3}'-z'-x'$ is not an induced $P_{10}$, $x'b_{i}\in E(G)$. 
			By the arbitrariness of $b_i$, $N(x)\cap B_i\subseteq N(x')$. 
			Then by the symmetry of $x$ and $x'$, $x$ and $y$, $N(x)\cap B_i=N(x')\cap B_i=N(y)\cap B_i$, a contradiction. 
			Thus, all neighborhoods of the vertices in unsigned parts of different elements of $Q_4^{i,2,+}$ are comparable in $B_{i+3}$.
			So, there is a vertex $b$ in $B_{i+3}$ such that $b$ is adjacent to all vertices in the unsigned parts of all elements of $Q_4^{i,2,+}$ except one, say $K_1$. 
			
			We then assume that there are two elements $K:=(U_1,U_2),K':=(U_1',U_2')$ of $Q_4^{i,2,+}\setminus K_1$ such that there are two vertices $x\in U_1,x'\in U_1'$ with the neighborhoods of $x$ and $x'$ in $B_{i}$ are incomparable. 
			Let $z\in U_2,z'\in U_2'$ and $b_{i},b_{i}'\in B_{i}$ with $b_i$ is adjacent to $x$ but $x'$, $b_i'$ is adjacent to $x'$ but $x$. 
			Since $v_{i+3}$ and $b$ are not comparable pair, there is a vertex $w$ adjacent $v_{i+3}$ but $b$. 
			Then since $x'-b_{i}'-v_{i-1}-b_i-x-z-b-v_{i+2}-v_{i+3}-w$ is not an induced $P_{10}$, $w\in A_{i+3}$ and either $wx\in E(G)$ or $wx'\in E(G)$. 
			If $w$ is adjacent to exactly one of $x$ and $x'$, say $x$, then $v_{i+3}-w-x-z-b-z'-x'-b_i'-v_{i-1}-v_i$ is an induced $P_{10}$. 
			Thus, $w$ is adjacent to both $x$ and $x'$. 
			Let $K_2$ be the element of $Q_4^{i,2,+}\setminus K_1$ such that there is a vertex $v$ in its signed part with the neighborhood of $v$ in $B_i$ is minimal among all vertices in the signed parts of all elements of $Q_4^{i,2,+}\setminus K_1$. 
			Let $v'$ be a neighbor of $v$ in $B_i$, $v''$ be a vertex in the unsigned part of $K_2$ and $x_1y_1$ be an edge in $K_1$. 
			It follows that $\{b,w,v',v'',x_1,y_1\}$ dominates all elements of $Q_4^{i,2,+}$ and can be obtained in $O(|V(G)|+|E(G)|)$ time. 
			This completes the proof of (2). 
			
			\vspace{0.2cm}
			
			Let $Q_4^{i,3}=Q_4^i\setminus (Q_4^{i,1}\cup Q_4^{i,2})$.  
			It follows that for each element $K=(U_1,U_2)\in Q_4^{i,3}$, for every $v\in U_2$, $N(v)\cap N(C)\subseteq A_{i-2}\cup B_{i-1}\cup A_{i}\cup B_{i+1}\cup A_{i+2}$.  
			Then for all $v\in U_2$, either $N(v)\cap N(C)\subseteq B_{i-1}\cup A_{i}\cup B_{i+1}$ or $N(v)\cap N(C)\subseteq A_{i-2}\cup B_{i-1}\cup B_{i+1}\cup A_{i+2}$. 
			Let $Q_4^{i,3,1}$ (or $Q_4^{i,3,2}$, respectively) be the set of elements in $Q_4^{i,3}$ such that the neighborhoods of its unsigned part are in $B_{i-1}\cup A_{i}\cup B_{i+1}$ (or $A_{i-2}\cup B_{i-1}\cup B_{i+1}\cup A_{i+2}$, respectively). 
			So, $Q_4^{i,3}=Q_4^{i,3,1}\cup Q_4^{i,3,2}$. 
			
			For any element $K=(U_1,U_2)$ of $Q_4^{i,3,1}$ such that $U_1$ has no neighbors in $A_{i-3}$ and $A_{i+3}$, either $N(U_1)\cap N(C)\subseteq A_{i-1}\cup B_i$ or $N(U_1)\cap N(C)\subseteq A_{i+1}\cup B_i$. 
			So, $K$ is a reducible component for $L$, a contradiction. 
			Thus, for every element $K=(U_1,U_2)$ of $Q_4^{i,3,1}$, $U_1$ has neighbors in $A_{i-3}\cup A_{i+3}$. 
			Let $Q_4^{i,3,1,-}$ (or $Q_4^{i,3,1,+}$, respectively) be the set of elements of $Q_4^{i,3,1}$ such that its signed part has neighbors in $A_{i-3}$ (or $A_{i+3}$, respectively). 
			
			\vspace{0.2cm}
			\noindent (3) There exist 10 vertices that dominate all elements of $Q_4^{i,3,1}$, and these vertices can be found in $O(|V(G)|+|E(G)|)$ time. 
			\vspace{0.2cm}
			
			By symmetry, we only need to prove there are 5 vertices that dominate all elements of $Q_4^{i,3,1,+}$. 
			If there is a vertex $v$ in the unsigned part of an element of $Q_4^{i,3,1,+}$ such that $v$ has no neighbors in $B_{i-1}$, then by Claim \ref{edges in NC}, $v$ and $a$ are comparable, where $a\in A_{i+3}$ is a neighbor of a vertex in the signed part of this element. 
			Thus, we may assume that for every $v$ in the unsigned parts of all elements of $Q_4^{i,3,1,+}$, $N(v)\cap B_{i-1}\neq \emptyset$. 
			Suppose that there are two elements $K:=(U_1,U_2),K':=(U_1',U_2')$ of $Q_4^{i,3,1,+}$ such that there are two vertices $z\in U_2,z'\in U_2'$ with incomparable neighborhoods in $B_{i-1}$.  
			Let $x\in U_1,b_{i-1},b_{i-1}'\in B_{i-1}, a_{i+3}\in A_{i+3}$ such that $xa_{i+3}\in E(G)$ and $b_{i-1}$ is adjacent to $z$ but $z'$, $b'_{i-1}$ is adjacent to $z'$ but $z$. 
			Then $z'-b_{i-1}'-v_{i-2}-b_{i-1}-z-x-a_{i+3}-v_{i+3}-v_{i+2}-v_{i+1}$ is an induced $P_{10}$. 
			Thus, all neighborhoods of the vertices in the unsigned parts of different elements of $Q_4^{i,3,1,+}$ are comparable in $B_{i-1}$.
			So, there is a vertex $b$ in $B_{i-1}$ such that $b$ is adjacent to all vertices in unsigned parts of all elements of $Q_4^{i,3,1,+}$ except one, say $K_1$. 
			Since $G$ has no comparable pair, there is a vertex $w$ adjacent to $v_{i-1}$ but $b$. 
			
			We then assume that there are two elements $K:=(U_1,U_2),K':=(U_1',U_2')$ of $Q_4^{i,3,1,+}\setminus K_1$ such that there are two vertices $x\in U_1,x'\in U_1'$ with incomparable neighborhoods in $A_{i+3}$.  
			Let $z\in U_2$ and $a_{i+3},a_{i+3}'\in A_{i+3}$ with $a_i$ is adjacent to $x$ but $x'$, $a_i'$ is adjacent to $x'$ but $x$. 
			Then since $w-v_{i-1}-v_i-b-z-x-a_{i+3}-v_{i+3}-a_{i+3}'-x'$ is not an induced $P_{10}$, $w\notin B_{i}$. 
			If $w\in B_{i-2}$, either $wa_{i+3}\in E(G)$ or  $wa_{i+3}'\in E(G)$, say $wa_{i+3}'$, and hence, $v_{i+1}-v_{i+2}-v_{i+3}-a_{i+3}'-w-v_{i-1}-v_{i-2}-b-z-x$ is an induced $P_{10}$. 
			If $w\in A_{i-1}$, then $wa_{i+3}\in E(G)$, $wa_{i+3}'\in E(G)$, $wx\in E(G)$ or $wx'\in E(G)$. 
			Note that if $wa_{i+3}\in E(G)$, then $w$ is not adjacent to $x'$ and if $wa_{i+3}'\in E(G)$, then $w$ is not adjacent to $x$. 
			Thus, if $wa_{i+3}\in E(G)$ or $wa_{i+3}'\in E(G)$, say $wa_{i+3}'\in E(G)$, then $v_{i+1}-v_{i+2}-v_{i+3}-a_{i+3}'-w-v_{i-1}-v_{i-2}-b-z-x$ is also an induced $P_{10}$. 
			So we may assume that either $wx\in E(G)$ or $wx'\in E(G)$. 
			If $w$ is not adjacent to $x'$, then $x-w-v_{i-1}-v_i-b-z'-x'-a_{i+3}'-v_{i+3}-v_{i+2}$ is an induced $P_{10}$. 
			Thus, $w$ is adjacent to both $x$ and $x'$. 
			Let $v$ be a vertex in the signed parts of a element of $Q_4^{i,3,1,+}\setminus K_1$ such that $v$ has a minimal neighborhood in $A_{i+3}$ among all vertices the signed parts of all elements of $Q_4^{i,3,1,+}\setminus K_1$. 
			Let $v'$ be a neighbor of $v$ in $A_{i+3}$, $x_1y_1$ be an edge in $K_1$. 
			Then it follows that $\{b,w,v',x_1,y_1\}$ dominate all elements of $Q_4^{i,3,1,+}$ and can be obtained in $O(|V(G)|+|E(G)|)$ time. 
			This completes the proof of (3). 
			
			\vspace{0.2cm}
			
			For any element $K=(U_1,U_2)\in Q_4^{i,3,2}$, either $N(U_1)\cap N(C)\subseteq A_{i-1}\cup B_{i}\cup A_{i+3}$ or $N(U_1)\cap N(C)\subseteq A_{i-3}\cup B_{i}\cup A_{i+1}$. 
			By symmetry, we may assume that $N(U_1)\cap N(C)\subseteq A_{i-1}\cup B_{i}\cup A_{i+3}$.
			Since $G$ has no comparable pair, it follows that $U_2$ has no neighbors in $A_{i+2}$. 
			We assume first that $U_2$ has a neighbor $b_{i+1}$ in $B_{i+1}$. 
			Since $b_{i+1}$ and $x$ are not comparable, where $x\in U_1$, $|U_2|\geq 2$ and hence, $N(U_2)\cap N(C)\subseteq A_{i-2}\cup B_{i+1}$. 
			Then it follows that $U_1$ has no neighbors in $A_{i-1}$.
			If $L(v_{i-1})\neq L(v_{i+1})$, by symmetry, we may assume that $L(v_{i-1})=\{1\}$ and $L(v_{i+1})=\{2\}$, then $L(B_i)=\{3\}$, and hence, $3\notin L(v)$ for every $v$ in neighborhood of $U_2$. 
			Thus, every vertex in $U_2$ is a reducible vertex for $L$, a contradiction. 
			If $L(v_{i})\neq L(v_{i+2})$, then by a similar analysis, every vertex in $U_1$ is a reducible vertex for $L$, a contradiction.
			So, we may assume that $L(v_{i-1})=L(v_{i+1})=\{1\}$ and $L(v_{i})=L(v_{i+2})=\{2\}$. 
			If $L(v_{i-2})=\{2\}$ and $L(v_{i+3})=\{1\}$, then $K$ is a reducible component, a contradiction. 
			
			We then assume that $L(v_{i-2})=L(v_{i+3})=\{3\}$. 
			Then for any coloring $c$ of $(G,L,Z)$, either $c(a_{i-2})=1$ or $c(a_{i-2})=2$ and either $c(a_{i+3})=1$ or $c(a_{i+3})=2$, where $a_{i-2}$ is a vertex of $U_2$ in $A_{i-2}$ and $a_{i+3}$ is a vertex of $U_1$ in $A_{i+3}$. 
			If for every $v\in N(U_1)\cap A_{i+3}$, $c(v)\neq 1$ and for every $v\in N(U_2)\cap A_{i-2}$, $c(v)\neq 2$, then there is also a coloring $c'$ of $(G,L,Z)$ such that $c'(v)=1$ for every $v\in U_1$, $c'(v)=2$ for every $v\in U_2$ and $c'(v)=c(v)$ for every remaining vertex $v$. 
			If there is a vertex $v$ in $N(U_1)\cap A_{i+3}$ with $c(v)=1$ or in $N(U_2)\cap A_{i-2}$ with $c(v)=2$, say in $N(U_2)\cap A_{i-2}$ with $c(v)=2$, then by Claim \ref{edges in nieghbors of D}, $c(u)=1$ for every $u\in U_1$ and $c(u)=3$ for every $u\in U_2$. 
			Thus, for any coloring $c$ of $(G,L,Z)$, $c(u)\neq 2$ for every $u\in U_1$ and $c(u)\neq 1$ for every $u\in U_2$. 
			Define $L'$ by setting $L'(v)=L(v)\setminus \{2\}$ for every $v\in U_1$, $L'(v)=L(v)\setminus \{1\}$ for every $v\in U_2$ and leaving $L'(v)=L(v)$ for every remaining vertex $v$. 
			Then $|L'(v)|\leq 2$ for every $v\in K$, can be obtained in $O(|V(G)|+|E(G)|)$ time and $(G,L,Z)$ is colorable if and only if $(G,L',Z)$ is colorable.   
			
			So we may assume that exactly one of $L(v_{i-2})$ and $L(v_{i+3})$ is equal to $\{3\}$, say $L(v_{i-2})=\{3\}$ and $L(v_{i+3})=\{1\}$. 
			By a similarly analysis above, define $L'$ by setting $L'(v)=L(v)\setminus \{2\}$ for every $v\in U_1$, $L'(v)=L(v)\setminus \{1\}$ for every $v\in U_2$ and leaving $L'(v)=L(v)$ for every remaining vertex $v$. 
			Then $|L'(v)|\leq 2$ for every $v\in K$, can be obtained in $O(|V(G)|+|E(G)|)$ time and $(G,L,Z)$ is colorable if and only if $(G,L',Z)$ is colorable.   
			The proof of the case that $U_2$ has no neighbors in $B_{i+1}$ is the same. 
			
			So, we could obtain a subpalette $L'$ of $L$ and a set $T_2^4$ with $|T_2^4|\leq 12\times 7+12\times 7+10\times 7=238$ in $O(|V(G)|+|E(G)|)$ time such that $(G,L,Z)$ is colorable if and only if $(G,L',Z)$ is colorable and $T_2^4$ dominates every vertex in a element of $Q_4$ with list length 3 for $L'$.  
			Thus, by classifying all possible colorings of $T_2^4$, there is a set $\mathcal{R}$ of restrictions of $(G,L,Z)$ with size in $O(1)$, such that $(G,L,Z)$ is colorable if and only if $\mathcal{R}$ is colorable, $\mathcal{R}$ can be obtained in $O(|V(G)|+|E(G)|)$ time and there are no vertices in a element of $Q_4$ have list length 3.  
			This completes the proof of Claim \ref{Q_4}.
		\end{proof}
		
		Now we are ready to prove Lemma \ref{Y}. 
	\begin{proof}[Proof of Lemma \ref{Y}.]
		We first apply Claim \ref{Q_4} to obtain a set $\mathcal{R}_1$ of restrictions of $(G,L,Z)$. 
		For each element of $\mathcal{R}_1$, we apply Claim \ref{Q_3}, and let $\mathcal{R}_2$ be the set of all restrictions we obtained from this way. 
		Then for each element of $\mathcal{R}_2$, by classifying all possible colorings of $T_2^1\cup T_2^2\cup T_1^1\cup T_1^2\cup T_1^3$, we obtain a set of restrictions of this element. 
		Let $\mathcal{R}_3$ be the set of all restrictions we obtained this way. 
		Then by Claim \ref{K_2}, \ref{Q_1}, \ref{Q_2}, \ref{Q_3} and \ref{Q_4}, $\mathcal{R}_3$ is the set we seek.
		This completes the proof of Lemma \ref{Y}.
	\end{proof}

	\section{Isolated Vertex in $G\setminus N[C]$}
	Let $G$ be a cleaned graph in $\mathcal{G}_{10,7}$ with an induced odd cycle $C:=v_1-v_2-v_3-v_4-v_5-v_6-v_7-v_1$ and $(G,L,Z)$ be a restriction of $(G,L_0,\emptyset)$. 
	Let $D=V(G)\setminus N[C]$, $X\subseteq D$ be set of vertices that have no neighbors in $D$, and $Y\subseteq D$ be the set of vertices that have some neighbors in $D$. 
	For each $i\in [7]$, let $X_{i}$ be the set of vertices that has some neighbors in $A_{i-2}\cup B_{i-1}$ and some neighbors in $A_{i+2}\cup B_{i+1}$. 
	Let $v$ be a vertex in $X$. 
	Then since $G$ has no induced $C_3,C_5,C_9$, there is an index $i\in [7]$ such that either $N(v)\subseteq A_{i-2}\cup B_{i-1}\cup B_{i+1}\cup A_{i+2}$ or $N(v)\subseteq B_{i-1}\cup A_i\cup B_{i+1}$. 
	If $N(v)\subseteq B_{i-1}\cup A_i\cup B_{i+1}$, then $N(v)\subseteq N(v_i)$ which is a contradiction since $G$ has no comparable pair. 
	So we may assume that for every $v\in X$,  $N(v)\subseteq A_{i-2}\cup B_{i-1}\cup B_{i+1}\cup A_{i+2}$ for some $i\in [7]$. 
	Thus, $X=\bigcup_{i\in [7]} X_{i}$.
	Furthermore, when we define an induced $C_7$ in the graph $G$, we automatically define the corresponding $D$, $X_i$ and $Y$ as described above.

	In this section, our aim is to obtain a set $\mathcal{R}$ of restrictions of $(G,L,\emptyset)$ in polynomial time such that for a fixed index $i\in [7]$, every vertex in $X_i$ has list length at most 2. 
	Formally, we prove the following Lemma.  
	
	\begin{lemma}\label{one set}
		Let $G$ be a cleaned graph in $\mathcal{G}_{10,7}$, $C$ be an induced $C_7$ in $G$ and $L$ be an updated subpalette of $L_0$ with $|L(v)|=1$ for every $v\in V(C)$. 
		Then there is a set $\mathcal{R}$ of restrictions of $(G,L,\emptyset)$ with size $O(|V(G)|^{12})$ such that 
		\begin{itemize}
			\item[(a)] $(G,L,\emptyset)$ is colorable if and only if $\mathcal{R}$ is colorable; 
			\item[(b)] for each element $(G',L',Z')$ of $\mathcal{R}$, $|L(u)|\leq 2$ for every non-reducible vertex $u\in X_i\cap V(G')$ and a fixed index $i\in [7]$; 
			\item[(c)] $\mathcal{R}$ can be constructed in $O(|V(G)|^{11}(|V(G)|+|E(G)|))$ time.  
					\item[(d)] any coloring of $\mathcal{R}$ can be extended to a coloring of  $(G,L,\emptyset)$  in $O(|V(G)|)$ time. 
		\end{itemize} 
	\end{lemma}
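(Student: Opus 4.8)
The plan is to fix the index $i$ once and for all --- by the rotational symmetry of $C$ we may take $i$ to be any convenient value --- and to analyze how the vertices of $X_i$ attach to the four sets $A_{i-2},B_{i-1},B_{i+1},A_{i+2}$, whose union contains $N(X_i)$. Writing $P=A_{i-2}\cup B_{i-1}$ and $Q=A_{i+2}\cup B_{i+1}$, each $v\in X_i$ has a neighbor in $P$, a neighbor in $Q$, and no other neighbor. First I would carry out a purely structural analysis, in the spirit of Claims~\ref{K_2}--\ref{Q_4} for the non-trivial components: using that $G$ is $\{C_3,C_5,C_9,P_{10}\}$-free and has no comparable pair, I derive nesting and completeness relations among the sets $N(v)\cap P$ and $N(v)\cap Q$ for $v\in X_i$. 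Unlike the bounded-order components, however, $X_i$ cannot be dominated by a bounded set of vertices without using information about a coloring, so the rest of the argument must be coloring-dependent.

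The technical core is a dichotomy, which I would isolate as a separate lemma --- this is the ``good coloring versus mono condition'' statement announced in the introduction. For every coloring $c$ of $(G,L)$, either $c$ is \emph{good}, meaning that some set $T\subseteq P\cup Q$ of bounded size --- pinned down by a bounded number of ``representative'' boundary vertices, so that there are only $O(|V(G)|^{12})$ candidate pairs $(T,c|_{T})$ --- is colored in such a way that, after updating, the non-reducible vertices of $X_i$ are forced to have lists of size at most $2$; or else $c$ can be transformed, without affecting colorability, into a coloring under which a prescribed subset $S\subseteq P\cup Q$ (again determined by boundedly many representative vertices) is monochromatic --- the ``mono'' condition. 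Proving this dichotomy, and in particular showing that a coloring which is not good can always be pushed to one satisfying the mono condition, is the main obstacle: it relies on a delicate color-exchange argument along induced paths that must avoid creating a $P_{10}$ or a $C_9$, and is the most substantial part of the section.

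Granting the dichotomy, $\mathcal{R}$ is assembled by branching. For each of the $O(|V(G)|^{12})$ candidate pairs $(T,c|_{T})$ I form the restriction of $(G,L,\emptyset)$ obtained by intersecting the lists on $T$ with the guessed colors, updating, and then deleting every vertex of $X_i$ that is reducible for the resulting palette; in each surviving restriction the structural relations together with the guessed colors force $|L'(u)|\le 2$ for every non-reducible $u\in X_i$. For each candidate monochromatic set $S$ and each of the three possible common colors I form the restriction obtained by intersecting the lists on $S$ with that color, adding $S$ to $Z$, and updating; here the mono condition, again combined with the structural relations, forces $|L'(u)|\le 2$ on the non-reducible vertices of $X_i$ (in effect this case has become a $2$-SAT-type instance, to be finished later via Lemma~\ref{improve 2sat}). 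By the dichotomy, $(G,L,\emptyset)$ is colorable if and only if one of these restrictions is, which proves~(a); (b) holds by construction; there are $O(|V(G)|^{12})$ restrictions, and organizing the branching so that restrictions differing only in $O(1)$ list entries share their construction gives the time bound in~(c); finally, for~(d) a coloring of a good-type restriction extends by greedily recoloring the deleted reducible vertices in $O(|V(G)|)$ time, while a mono-type restriction deletes nothing.
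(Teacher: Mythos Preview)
Your high-level architecture --- a ``good coloring vs.\ mono'' dichotomy, branching over bounded-size witnesses on the good side and passing to the $Z$-augmented 2-SAT of Lemma~\ref{improve 2sat} on the mono side --- is the paper's strategy, and you correctly flag the color-exchange argument as the technical heart. But three specifics of your plan are wrong in ways that matter for the proof to go through.

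First, the dichotomy is not stated for $C$ directly. The paper first branches on a triple $(x,a_{i-2},a_{i+2})$ with $x\in\bar X_i$ not satisfying mono and $a_{i\pm2}$ in $N(x)\cap P$, $N(x)\cap Q$, builds an \emph{auxiliary} induced $C_7$
\[C'=v_{i-3}-v_{i-2}-a_{i-2}-x-a_{i+2}-v_{i+2}-v_{i+3}-v_{i-3},\]
and proves the dichotomy (Lemma~\ref{wonderful}) relative to $(C',i)$; every recoloring set in Table~\ref{table:1} is defined from this chosen $x$ and $C'$, and the argument does not go through for $C$ alone. Second, the witnesses $T$ are \emph{not} contained in $P\cup Q$: the paper isolates six type-$A$ and four type-$B$ configurations, each a specific short induced path or $7$-cycle whose vertices lie in $D$, $A_{i\pm3}$, $X_{i\pm2}$, $Y$, etc., and it is precisely this external structure that drives the forcing arguments (Lemmas~\ref{A type}--\ref{B type}); a witness confined to $P\cup Q$ cannot force the needed $P_{10}$'s and $C_9$'s. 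Moreover type~$B$ is only an intermediate case --- it yields ``no $2P_3$ matching in $\bar X_i$'', and one further branch on the color of a single $x'\in\bar X_i$ is still required to get lists of size~$\le2$. Third, the mono condition is not that one set $S\subseteq P\cup Q$ is monochromatic: it is that, for \emph{each} $x\in\bar X_i$ separately, both $N(x)\cap P$ and $N(x)\cap Q$ are monochromatic. The corresponding restriction is $(G\setminus\bar X_i,L,Z)$ with $Z=\bigcup_{x\in\bar X_i}\{N(x)\cap P,\,N(x)\cap Q\}$, so it \emph{does} delete $\bar X_i$ --- and that deletion is exactly what makes the extension in~(d) a single $O(|V(G)|)$ pass.
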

	We start with the following Claim.

	\begin{claim}\label{edges in NC}
		Let $v$ be a vertex in $N(C)$ with a neighbor $u\in N(C)$. 
		If $v\in A_i$ for some $i$, then $u\notin A_{i}\cup A_{i\pm 2}\cup B_{i\pm 1}\cup B_{i\pm 3}$. 
		If $v\in B_i$ for some $i$, then $u\notin A_{i\pm 1}\cup A_{i\pm 3}\cup B_i\cup B_{i\pm 2}\cup B_{i+3}$. 
		Moreover, if $v$ has a neighbor $d$ in $G\setminus N[C]$ such that $N(d)\cap N(v_{i+4})\neq \emptyset$, then $u\notin A_{i+4}\cup B_{i+5}$. 
	\end{claim}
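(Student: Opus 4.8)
The plan is to derive every exclusion from the fact that $G\in\mathcal{G}_{10,7}$ is $C_3$-free and $C_5$-free (its only induced odd cycle being $C_7$): in each forbidden case I will exhibit a triangle or a $5$-cycle that $v$, $u$, and a short sub-path of $C$ would induce, so none of the three statements needs $P_{10}$- or $C_9$-freeness. For the first statement, take $v\in A_i$ with a neighbour $u\in N(C)$ and run through the seven sets $A_i,A_{i\pm2},B_{i\pm1},B_{i\pm3}$, halving the work via the reflection $v_j\mapsto v_{2i-j}$ of $C$, which fixes $v_i$ and hence the situation of $v$. If $u\in A_i\cup B_{i+1}\cup B_{i-1}$, then $u$ and $v$ have the common $C$-neighbour $v_i$, so $\{v,u,v_i\}$ induces a $C_3$. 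If $u\in A_{i+2}\cup B_{i+3}$, then $v_{i+2}\in N_C(u)$ is at distance $2$ from $v_i$ along $C$, and $v-v_i-v_{i+1}-v_{i+2}-u-v$ is an induced $C_5$ — induced because $N_C(v)=\{v_i\}$, $N_C(u)$ avoids $\{v_i,v_{i+1}\}$, and $C_7$ is chordless; $A_{i-2}$ and $B_{i-3}$ follow by the reflection. The second statement ($v\in B_i$, so $N_C(v)=\{v_{i-1},v_{i+1}\}$) is handled identically: $u\in B_i\cup B_{i\pm2}\cup A_{i\pm1}$ shares a $C$-neighbour with $v$ and yields a $C_3$, while $u\in A_{i\pm3}\cup B_{i+3}$ yields an induced $C_5$ obtained from a two-edge arc of $C$ joining a $C$-neighbour of $v$ to a $C$-neighbour of $u$ (for $u\in B_{i+3}$ one must use the arc $v_{i-1}-v_{i-2}-v_{i-3}$, noting $v_{i-3}=v_{i+4}\in N_C(u)$). (The same argument in fact also excludes $u\in B_{i-3}$, via $v_{i+1}-v_{i+2}-v_{i+3}$; I will assert only what the statement requires.)

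For the ``moreover'' part, suppose $u\in A_{i+4}\cup B_{i+5}$; then $u\sim v_{i+4}$ by definition of these sets, and $u\sim v$ by hypothesis. Fix $e\in N(d)\cap N(v_{i+4})$. Since $d\notin N[C]$, the vertex $e$ does not lie on $C$ (in particular $e\neq v_{i+3},v_{i+5}$), so $e\in N(C)\cap N(v_{i+4})=A_{i+4}\cup B_{i+3}\cup B_{i+5}$; in particular $e\sim v_{i+4}$. Then $v,d,e,v_{i+4},u$ are five distinct vertices and $v-d-e-v_{i+4}-u-v$ is chordless: $v\not\sim v_{i+4}$ because $v_{i+4}\notin N_C(v)$; $d\not\sim v_{i+4}$ because $d\notin N[C]$; $d\not\sim u$, else $\{d,v,u\}$ induces a $C_3$; $v\not\sim e$, else $\{v,d,e\}$ induces a $C_3$; and $u\not\sim e$, else $\{u,e,v_{i+4}\}$ induces a $C_3$. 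Hence this is an induced $C_5$, a contradiction.

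I do not expect a genuine obstacle: each case collapses to spotting a $C_3$ or $C_5$, and the only care needed is arithmetic of indices modulo $7$ together with routine verification of inducedness, which is automatic once $N_C(v)$ and $N_C(u)$ are pinned down and one uses that $C_7$ has no chords. The single subtle point is in the ``moreover'' part, where one must notice that $e\in N(C)$ — so that $e$ is genuinely adjacent to $v_{i+4}$ and plays the role of a detour parallel to the edge $vu$ — and this is precisely the place where the assumption $d\notin N[C]$ is used.
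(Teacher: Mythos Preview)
Your proof is correct and follows essentially the same approach as the paper: each exclusion is witnessed by an induced $C_3$ (when $u$ and $v$ share a $C$-neighbour) or an induced $C_5$ (when their $C$-neighbours are at distance two on $C$), and in the ``moreover'' part the five-cycle $v-d-e-v_{i+4}-u-v$ is exactly the paper's $v-d-d'-v_{i+4}-u-v$. Your write-up is in fact more careful than the paper's, which asserts inducedness without checking all the non-edges you verify.
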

	\begin{proof}
		We first assume that $v\in A_i$. 
		If $u\in A_i$, then $v-u-v_i-v$ is a $C_3$. 
		If $u\in A_{i\pm 2}$, say $A_{i+2}$, then $v-v_{i}-v_{i+1}-v_{i+2}-u-v$ is an induced $C_5$. 
		If $u\in B_{i\pm 1}$, say $B_{i+1}$, then $v-u-v_i-v$ is a $C_3$. 
		If $u\in B_{i\pm 3}$, say $B_{i+3}$, then $v-v_{i}-v_{i+1}-v_{i+2}-u-v$ is an induced $C_5$. 
		The proof for  $v\in B_i$ is similar. 
		We then assume that $v$ has a neighbor $d$ in $G\setminus N[C]$ such that there is a neighbor $d'$ of $d$ in $N(v_{i+4})$ and $u\in A_{i+4}\cup B_{i+5}$, say $A_{i+4}$. 
		Since $G$ has no $C_3$, $ud\notin E(G)$. 
		Then $v-d-d'-v_{i+4}-u-v$ is an induced $C_5$. 
		This completes the proof of Claim \ref{edges in NC}. 
	\end{proof}

	We call a coloring $c$ of $G$ good with respect to $(C,i)$ if $c(v_{i-2}), c(v_{i-1}), c(v_{i+2})$ are pairwise different and $c(v_{i+1})=c(v_{i-1})$. 
	Next we distinguish between several types of good coloring of $G$. 
	We call a good coloring $c$ of $G$  
	\begin{itemize}
		\item[(A)] a type $A$ coloring with respect to $(C,i)$ if 
		
		\begin{itemize}
			\item[$\bullet$] Case 1: there is a set of vertices $T=\{a_1,a_2,a_2',w_1,w_2,w_3\}$ with $a_{1}\in A_{i-2}\cup B_{i-1}$, $w_1,w_2\in X_{i}$, $a_{2},a_{2}'\in A_{i+2}\cup B_{i+1}$ and $w_3\in D$ such that $a_{1}-w_1-a_{2}-w_2-a_{2}'-w_3$ is an induced $P_6$ in $G$ with $c(a_{1})=c(v_{i+2})$, $c(w_1)=c(w_2)=c(w_3)=c(v_{i+1})$ and $c(a_{2})=c(a_{2}')=c(v_{i-2})$, or the symmetric case with the roles of $+$ and $-$ reversed;
			\item[$\bullet$] Case 2: there is a set of vertices $T=\{a_1,a_2,w_1\}$ with $a_{1}\in A_{i-2}\cup B_{i-3}$, $w_1\in D$ and $a_{2}\in A_{i+1}\cup B_{i+2}$ such that $a_{1}-w_1-a_{2}$ is an induced $P_3$ in $G$ with $c(a_1)=c(v_{i+2})$, $c(a_{2})=c(v_{i-2})$ and $c(w_1)=c(v_{i+1})$, or the symmetric case with the roles of $+$ and $-$ reversed; 
			\item[$\bullet$] Case 3: there is a set of vertices $T=\{a_1,a_1',a_2,w_1,w_2\}$ with $a_{1},a_1'\in A_{i-2}\cup B_{i-1}$, $w_1,w_2\in D$ and $a_{2}\in A_{i+3}$ such that $a'_{1}-w_1-a_1-w_2-a_{2}$ is an induced $P_5$ in $G$ with $c(a_{2})=c(w_1)=c(v_{i-2})$, $c(w_2)=c(a_1')=c(v_{i+1})$ and $c(a_1)=c(v_{i+2})$, or the symmetric case with the roles of $+$ and $-$ reversed; 
			\item[$\bullet$] Case 4: there is a set of vertices $T=\{a_1,a_1',a_2,w_1\}$ with $a_{1},a_1'\in A_{i-2}\cup B_{i-1}$, $w_1\in D$ and $a_{2}\in A_{i-3}$ such that $a'_{1}-w_1-a_1-a_{2}$ is an induced $P_4$ in $G$ with $c(w_1)=c(v_{i-2})$, $c(a_{2})=c(a_1')=c(v_{i+1})$ and $c(a_1)=c(v_{i+2})$, or the symmetric case with the roles of $+$ and $-$ reversed; 
			\item[$\bullet$] Case 5: there is a set of vertices $T=\{a_1,a_1',a_1'',w_1,w_2,w_3\}$ with $a_{1},a_1',a_1''\in A_{i-2}\cup B_{i-1}$, $w_1,w_2\in D$ and $w_3\in X_{i-2}$ such that $a_1-w_1-a_1'-w_2-a_1''-w_3$ is an induced $P_5$ in $G$ with $c(w_1)=c(v_{i-2})$, $c(w_1)=c(w_2)=c(a_1)=c(v_{i+1})$ and $c(a_1')=c(a_1'')=c(v_{i+2})$, or the symmetric case with the roles of $+$ and $-$ reversed; 
			\item[$\bullet$] Case 6: there is a set of vertices $T=\{a_1,a_1',a_1'',a_2,w_1,w_2,w_3,w_4\}$ with $a_{1},a_1',a_1''\in A_{i-2}\cup B_{i-1}$, $w_1,w_2,w_3,w_4\in D$ and $a_{2}\in A_{i+2}\cup B_{i+1}$ such that $a_1-w_1$ and $w_1-a_1'-w_2-a_1''-w_3-w_4-a_{2}-w_1$ is an induced $C_7$ in $G$ with $c(w_1)=c(v_{i-2})$, $c(a_1)=c(a_{2})=c(w_3)=c(w_2)=c(v_{i+1})$ and $c(a_1')=c(a_1'')=c(v_{i+2})$, or the symmetric case with the roles of $+$ and $-$ reversed; 
		\end{itemize}

		\item[(B)] a type $B$ coloring with respect to $(C,i)$ if 
		
		\begin{itemize}
			\item[$\bullet$] Case 1: there is a set of vertices $T=\{a_1,w_1,w_2\}$ with $a_{1}\in A_{i+2}\cup B_{i+1}$, $w_1,w_2\in D$ such that $w_2-w_1-a_{1}$ is an induced $P_3$ in $G$ with $c(a_{1})=c(v_{i-2})$, $c(w_1)=c(v_{i+1})$ and $c(w_2)=c(v_{i+2})$, or the symmetric case with the roles of $+$ and $-$ reversed; 
			\item[$\bullet$] Case 2: there is a set of vertices $T=\{a_1,w_1,w_2\}$ with $a_{1}\in A_{i+2}\cup B_{i+1}$, $w_1,w_2\in D$ such that $w_2-w_1-a_{1}$ is an induced $P_3$ in $G$ with $c(a_{1})=c(w_2)=c(v_{i-2})$, $c(w_1)=c(v_{i+1})$ and $w_2$ has no neighbors in $A_{i-2}\cup B_{i-1}$, or the symmetric case with the roles of $+$ and $-$ reversed; 
			\item[$\bullet$] Case 3: there is a set of vertices $T=\{a_1,w_1,w_2\}$ with $a_{1}\in A_{i+2}\cup B_{i+1}$, $w_1\in A_{i+3}$ and $w_2\in D$ such that $w_2-w_1-a_{1}$ is an induced $P_3$ in $G$ with $c(a_{1})=c(v_{i-2})$, $c(w_1)=c(v_{i+1})$ and $c(w_2)=c(v_{i+2})$, or the symmetric case with the roles of $+$ and $-$ reversed; 
			\item[$\bullet$] Case 4: there is a set of vertices $T=\{a_1,a_1',a_2,w_1,w_2,w_3\}$ with $a_{1},a_1'\in A_{i-2}\cup B_{i-1}$, $a_{2}\in A_{i+2}$ and $w_1,w_2,w_3\in D$ such that $a_{2}x'\in E(G)$ and $a'_{1}-w_1-a_1-w_2-w_3$ is an induced $P_5$ in $G$ with $c(w_1)=c(v_{i-2})$, $c(w_2)=c(a_1')=c(a_{2})=c(v_{i+1})$ and $c(a_1)=c(v_{i+2})$, or the symmetric case with the roles of $+$ and $-$ reversed. 
		\end{itemize}
	\end{itemize}
	
		In Lemma \ref{A type}, we prove that if $G$ has a type $A$ coloring for some index $i\in [7]$, then there is a set with finite order such that if we fix the colors of all vertices in this set, then every vertex in $X_i$ will have list length at most 2 after updating. 
		In Lemma \ref{B type}, we prove that if $G$ has a type $B$ coloring for some index $i\in [7]$, then there is a set with finite order such that if we fix the colors of all vertices in this set, there is no $2P_3$ in the set of vertices with list length 3, where $2P_3$ is a special structure which we will define later. 
	Moreover, we call a coloring type $(A,j)$ coloring if it is from the case $j$ of type $A$ coloring defined above and type $(B,j)$ coloring if it is from the case $j$ of type $B$ coloring defined above.
	
	\begin{lemma}\label{A type}
		Suppose that $G$ has a type $A$ coloring $c$ with respect to $(C,i)$. 
		Let $L$ be an updated subpalette of $L_0$ with $L(v)=\{c(v)\}$ for every $v\in T\cup \{v_{i-2},v_{i-1},v_{i+1},v_{i+2}\}$.
		Then $|L(x)|\leq 2$ for every vertex $x$ in $X_{i}$. 
	\end{lemma}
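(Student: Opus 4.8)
By permuting the three colours we may assume $c(v_{i-2})=1$, $c(v_{i-1})=c(v_{i+1})=2$ and $c(v_{i+2})=3$; the symmetric ($+\leftrightarrow-$) variants listed in the six cases are obtained by reflecting $C$ about $v_i$, so it suffices to treat the stated form in each case. The plan is to argue by contradiction: suppose some $x\in X_i$ has $|L(x)|=3$. Since $L$ is updated, a vertex with a size-$3$ list has no neighbour with a size-$1$ list; as every vertex of $T\cup\{v_{i-2},v_{i-1},v_{i+1},v_{i+2}\}$ carries the singleton list $\{c(\cdot)\}$, the vertex $x$ is non-adjacent to each such vertex. In particular $x$ is non-adjacent to all of $a_1,a_2,a_2',w_1,w_2,w_3$ and, being in $D$, to $V(C)$.

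Next I would record the local picture at $x$. Because $x\in X_i$, it has a neighbour $p\in A_{i-2}\cup B_{i-1}$ and a neighbour $q\in A_{i+2}\cup B_{i+1}$; here $q-x-p$ is an induced path since $p-x-q-p$ is not a triangle, $p\sim v_{i-2}$, $q\sim v_{i+2}$, and the singleton lists on $v_{i-2},v_{i+2}$ force $c(p)\in\{2,3\}$ and $c(q)\in\{1,2\}$. By Claim~\ref{edges in NC} each of the attachment sets $A_{i-2}\cup B_{i-1}$ and $A_{i+2}\cup B_{i+1}$ is independent, so $p$ is non-adjacent to every ``left'' $A/B$-vertex occurring in $T$ and $q$ to every ``right'' one; the remaining possible adjacencies between $\{p,q\}$ and the $A/B$-vertices of $T$ are precisely those not forbidden by Claim~\ref{edges in NC}, which I would tabulate, and then split on whether $p\in A_{i-2}$ or $B_{i-1}$ and whether $q\in A_{i+2}$ or $B_{i+1}$.

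The core of the proof is a case analysis over Cases~1--6 in the definition of a type-$A$ colouring. In each case the witness $T$ supplies an induced path (an induced $C_7$ with a pendant edge in Case~6) both of whose ends attach to $N(C)$; I would splice $x$ onto this configuration through $p$ and $q$ and close it around $C$ using the induced arc $v_{i-2}-v_{i-3}-v_{i+3}-v_{i+2}$ (together with $v_i$ when needed), obtaining an induced $P_{10}$ or an induced cycle of length $3$, $5$ or $9$ — all forbidden in $\mathcal{G}_{10,7}$. For example, in Case~1 one starts from the induced $P_6$ $w_3-a_2'-w_2-a_2-w_1-a_1$, uses the edge $a_1v_{i-2}$ and the arc above to reach $v_{i+2}$, and appends $q$ and $x$; the potential chords that would shorten this $P_{10}$ are exactly those ruled out by Claim~\ref{edges in NC} and by the colour equalities forced by $c$ (the independence of the two attachment sets, $c(p)\ne 1$, $c(q)\ne 3$), while in the few subcases where a genuine chord such as $a_1a_2'$ survives, that chord itself produces a shorter forbidden odd hole. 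The other cases are handled the same way with the paths and cycle provided by Cases~2--6.

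The main obstacle is precisely this case analysis: there are six cases, each branching further on the $A$-versus-$B$ membership of $p$ and $q$ and on which of the chords permitted by Claim~\ref{edges in NC} actually occur, with the appropriate forbidden configuration to be exhibited in every branch. Nothing deeper is needed conceptually: the definition of a type-$A$ witness was tailored so that the only way $x$ could keep all three colours is to be hidden from $T$ and from $C$ at once, and the two-sided attachment that defines $X_i$ then forces $x$ to complete a path too long to live in a $P_{10}$-free graph whose only induced odd holes are $7$-cycles.
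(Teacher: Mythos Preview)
Your overall strategy is exactly the paper's: assume some $x\in X_i$ keeps a full list, note that $x$ is non-adjacent to every vertex of $T$ and of $C$, pick $p\in A_{i-2}\cup B_{i-1}$ and $q\in A_{i+2}\cup B_{i+1}$ adjacent to $x$, and in each of the six cases splice $q{-}x{-}p$ onto the witness $T$ to exhibit a forbidden $P_{10}$ or $C_9$, using Claim~\ref{edges in NC} (including its ``moreover'' clause) together with the colour equalities to kill chords.

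There is, however, a concrete glitch in your Case~1 sketch. Routing through the long arc $v_{i-2}{-}v_{i-3}{-}v_{i+3}{-}v_{i+2}$ and then appending $q,x$ does not give an induced path: both $a_2$ and $a_2'$ lie in $A_{i+2}\cup B_{i+1}$ and are therefore adjacent to $v_{i+2}$, so the path $w_3{-}a_2'{-}w_2{-}a_2{-}w_1{-}a_1{-}v_{i-2}{-}v_{i-3}{-}v_{i+3}{-}v_{i+2}{-}q{-}x$ has the chords $a_2v_{i+2}$ and $a_2'v_{i+2}$. Your proposed fallback (``that chord itself produces a shorter forbidden odd hole'') does not rescue this: the chord $a_2v_{i+2}$ closes the $7$-cycle $a_2{-}w_1{-}a_1{-}v_{i-2}{-}v_{i-3}{-}v_{i+3}{-}v_{i+2}{-}a_2$, which is allowed in $\mathcal G_{10,7}$. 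The paper avoids the arc entirely in Case~1 and simply writes the $P_{10}$ as $q{-}x{-}p{-}v_{i-2}{-}a_1{-}w_1{-}a_2{-}w_2{-}a_2'{-}w_3$; the only vertex of $C$ used is $v_{i-2}$. The arc $v_{i+2}{-}v_{i+3}(-v_{i\pm 4})$ is genuinely needed only in Cases~3--5, and Case~2 closes a $C_9$ through $v_{i+1}{-}v_{i+2}$.

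Two smaller points. First, your fourfold split on $p\in A_{i-2}$ vs.\ $B_{i-1}$ and $q\in A_{i+2}$ vs.\ $B_{i+1}$ is unnecessary: the paper's paths use only $p\sim v_{i-2}$ and $q\sim v_{i+2}$, and the potential cross-edge $p\,a_1$ (same attachment set) and $q\,a_1$ (opposite set) are excluded uniformly by Claim~\ref{edges in NC}, the latter via the ``moreover'' clause applied to $a_1$ with $d=w_1$. Second, non-adjacency of $p,q$ to the $D$-vertices $w_1,w_2,w_3$ of $T$ is not a consequence of Claim~\ref{edges in NC}; it comes from the list argument (if $q\sim w_j$ with $L(w_j)=\{2\}$ then $L(q)=\{1\}$, forcing $|L(x)|\le 2$), which you already have in hand.
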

	\begin{proof}
		Suppose to the contrary there is a vertex $x\in X_{i}$ with $|L(x)|=3$. 
		Let $\bar{a}_{i-2}$ be a neighbor of $x$ in $A_{i-2}\cup B_{i-1}$ and $\bar{a}_{i+2}$ be a neighbor of $x$ in $A_{i+2}\cup B_{i+1}$. 
		If case 1, then by Claim \ref{edges in NC}, $\bar{a}_{i+2}-x-\bar{a}_{i-2}-v_{i-2}-a_1-w_1-a_{2}-w_2-a_{2}'-w_3$ is an induced $P_{10}$. 
		If case 2, then by Claim \ref{edges in NC}, $\bar{a}_{i+2}-x-\bar{a}_{i-2}-v_{i-2}-a_1-w_1-a_{2}-v_{i+1}-v_{i+2}-\bar{a}_{i+2}$ is an induced $C_{9}$.
		If case 3, then since $\bar{a}_{i-2}-x-\bar{a}_{i+2}-v_{i+2}-v_{i+3}-a_{2}-w_2-a_1-w_1-a_1'$ is not an induced $P_{10}$, $\bar{a}_{i-2}w_1\in E(G)$. 
		Hence, $\bar{a}_{i+2}-x-\bar{a}_{i-2}-w_1-a_1-w_2-a_{2}-v_{i+3}-v_{i+2}-\bar{a}_{i+2}$ is an induced $C_{9}$. 
		If case 4, then since $\bar{a}_{i-2}-x-\bar{a}_{i+2}-v_{i+2}-v_{i+3}-v_{i+4}-a_{2}-a_1-w_1-a_1'$ is not an induced $P_{10}$, $\bar{a}_{i-2}w_1\in E(G)$. 
		Hence, $\bar{a}_{i+2}-x-\bar{a}_{i-2}-w_1-a_1-a_{2}-v_{i+4}-v_{i+3}-v_{i+2}-\bar{a}_{i+2}$ is an induced $C_{9}$. 
		If case 5, then since $\bar{a}_{i+2}-x-\bar{a}_{i-2}-v_{i-2}-a_1-w_1-a_1'-w_2-a_1''-w_3$ is not an induced $P_{10}$, $\bar{a}_{i-2}w_1\in E(G)$. 
		On the other hand, since $w_3\in X_{i-2}$, $w_3$ has a neighbor $a_{i-4}$ in $A_{i-4}\cup B_{i-3}$. 
		Then $x-\bar{a}_{i-2}-w_1-a_1'-w_2-a_1''-w_3-a_{i-4}-v_{i-4}-v_{i-3}$ is an induced $P_{10}$. 
		If case 6, then since $\bar{a}_{i+2}-x-\bar{a}_{i-2}-v_{i-2}-a_1''-w_3-w_4-a_{2}-v_{i+2}-\bar{a}_{i+2}$ is not an induced $C_{9}$, $\bar{a}_{i+2}w_4\in E(G)$. 
		On the other hand, since $a_1-w_1-a_1'-w_2-a_1''-w_3-w_4-\bar{a}_{i+2}-x-\bar{a}_{i-2}$ is not an induced $P_{10}$, $\bar{a}_{i-2}w_1\in E(G)$. 
		Then $w_1-a_1'-w_2-a_1''-w_3-w_4-\bar{a}_{i+2}-x-\bar{a}_{i-2}-w_1$ is an induced $C_9$.
	\end{proof}
	Let $U$ be a subset of $X_{i}$ for some $i\in [7]$. 
	A {\em $kP_3$ matching} with respect to $U$ is an induced $kP_3$ such that for each $P_3$, its interior vertex belongs to $U$, and its two ends belong to $A_{i-2}\cup B_{i-1}$ and $A_{i+2}\cup B_{i+1}$, respectively. 
	
	\begin{lemma}\label{B type}
		Suppose that $G$ has a type $B$ coloring $c$ with respect to $(C,i)$. 
		Let $L$ be an updated subpalette of $L_0$ with $L(v)=\{c(v)\}$ for every $v\in T\cup \{v_{i-2},v_{i-1},v_{i+1},v_{i+2}\}$ and $\bar{X}_{i}$ be the set of those vertices in $X_{i}$ have list length $3$. 
		Then there is no $2P_3$ matching with respect to $\bar{X}_{i}$. 
	\end{lemma}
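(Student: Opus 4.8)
I would argue by contradiction, mirroring the proof of Lemma \ref{A type}. Suppose $p_1-x_1-q_1$ and $p_2-x_2-q_2$ form a $2P_3$ matching with respect to $\bar X_i$; thus $x_1,x_2\in\bar X_i\subseteq X_i$, $p_1,p_2\in A_{i-2}\cup B_{i-1}$, $q_1,q_2\in A_{i+2}\cup B_{i+1}$, and the only edges among these six vertices are $p_1x_1, x_1q_1, p_2x_2, x_2q_2$. The first step is to record the non-adjacencies that drive the argument: (i) since $x_1,x_2\in X$ they have no neighbour in $D$, in particular none of $w_1,w_2$ (nor $w_3$ in Case 4); (ii) since $|L(x_j)|=3$ and $L$ is updated, $x_j$ has no neighbour with a singleton list, so $x_j$ is non-adjacent to every vertex of $T\cup\{v_{i-2},v_{i-1},v_{i+1},v_{i+2}\}$; (iii) by Claim \ref{edges in NC} any two vertices of $A_{i+2}\cup B_{i+1}$ (resp.\ of $A_{i-2}\cup B_{i-1}$) are non-adjacent, each $p_j$ is adjacent to $v_{i-2}$ and to no vertex of $C$ other than possibly $v_i$, and each $q_j$ is adjacent to $v_{i+2}$ and to no vertex of $C$ other than possibly $v_i$.

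Using (i)--(iii) one checks that $q_1-x_1-p_1-v_{i-2}-p_2-x_2-q_2$ is an induced $P_7$ (here $v_{i-2}$ is adjacent to $p_1,p_2$ but to none of $x_1,q_1,x_2,q_2$). The plan is then to graft the type $B$ gadget $T$ onto this path together with a short arc of $C$ to obtain an induced $P_{10}$, or, in the degenerate configurations, an induced $C_3$, $C_5$ or $C_9$, or a comparable pair. For a type $(B,1)$ coloring the generic case already works: since $a_1\in A_{i+2}\cup B_{i+1}$ is adjacent to $v_{i+2}$, the vertices $w_2-w_1-a_1-v_{i+2}-q_1-x_1-p_1-v_{i-2}-p_2-x_2$ induce a $P_{10}$, each non-consecutive pair failing to be an edge by (i)--(iii), by $w_1,w_2\in D$, and by $w_2-w_1-a_1$ being an induced $P_3$; this contradicts that $G$ is $P_{10}$-free. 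When one of the few undetermined adjacencies is present one instead finds a forbidden subgraph directly: e.g.\ $a_1p_j\in E(G)$ yields the induced $C_5$ $a_1-p_j-x_j-q_j-v_{i+2}-a_1$, and if some $w\in\{w_1,w_2\}$ is adjacent to some $p_j$ or $q_j$ one re-routes the gadget through the $P_7$ using the arc $v_{i-2}-v_{i-3}-v_{i+3}-v_{i+2}$ to close up a $C_9$, or else $a_1$ turns out to be comparable with a vertex of $N(C)$, contradicting that $G$ is cleaned; Claim \ref{edges in nieghbors of D}, applied to the edge $w_1w_2$, restricts which such $D$--$N(C)$ edges can occur.

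Types $(B,2)$ and $(B,3)$ are handled by the same scheme, now using that $w_2$ has no neighbour in $A_{i-2}\cup B_{i-1}$ (Case 2) or that $w_1\in A_{i+3}$ (Case 3) to discard sub-cases --- in Case 3 one may route through $w_1$ and $v_{i+3}$ instead of through $v_{i+2}$. Case $(B,4)$ is the most involved: there $T$ is a $P_5$ $a_1'-w_1-a_1-w_2-w_3$ with the extra vertex $a_2\in A_{i+2}$, and $a_1,a_1'\in A_{i-2}\cup B_{i-1}$, so the gadget attaches to $C$ on both sides; one takes the $P_5$ (plus $a_2$) as backbone, splices in some $x_j,q_j$, and argues as before, applying Claim \ref{edges in nieghbors of D} to each edge of the $P_5$. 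The ``$+/-$ reversed'' variant of each case follows from the left--right symmetry of the definition of a $2P_3$ matching.

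The expected main obstacle is the volume of the case analysis rather than any single idea: each of the four types branches according to whether $p_j,q_j,a_1$ (and $a_1',a_2$ in Case 4) lie in the ``$A$-part'' or the ``$B$-part'' of their set and according to the presence of each potential edge between $\{w_1,w_2,w_3,a_1,a_1',a_2\}$ and $\{p_1,p_2,q_1,q_2\}$; in every branch one must produce an explicit forbidden induced subgraph or a comparable pair, taking care not to land on an induced $C_7$, which is allowed. All of this is of the same character as the proof of Lemma \ref{A type}, with Case $(B,4)$ absorbing most of the effort.
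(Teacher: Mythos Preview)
Your approach is essentially the paper's: set up the $2P_3$, graft it onto $T$ and an arc of $C$, and read off a forbidden $P_{10}$. For Cases 1--3 the paper uses exactly the path you wrote, $w_2-w_1-a_1-v_{i+2}-u_2'-x_2-u_2-v_{i-2}-u_1-x_1$, and treats all three cases uniformly in a single line. Where you diverge is in the anticipated volume of the case analysis: you plan to dispatch each stray edge $w_k u_j$, $w_k u_j'$ by producing a $C_9$ or a comparable pair, but almost none of that is needed. The point you record in (ii) has a stronger consequence you do not use: since $L$ is \emph{updated}, any adjacency from a $T$-vertex with singleton list $\{\alpha\}$ to $u_j$ removes $\alpha$ from $L(u_j)$, and $u_j$ already loses $c(v_{i-2})$ via $v_{i-2}$; so whenever $\alpha\neq c(v_{i-2})$ this forces $|L(u_j)|=1$ and hence $|L(x_j)|\le 2$, contradicting $x_j\in\bar X_i$. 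The symmetric statement holds for $u_j'$ and $c(v_{i+2})$. This one observation kills all the $w_k$--$u_j$ and $w_k$--$u_j'$ edges except the colour-coincidence ones (e.g.\ $w_2u_2'$ in Case~1, where $c(w_2)=c(v_{i+2})$), and those, together with $a_1u_j$, are handled by a single $C_5$ each (for instance $w_2-w_1-a_1-v_{i+2}-u_2'-w_2$). No $C_9$'s, no comparable pairs, and no appeal to Claim~\ref{edges in nieghbors of D} are required. Case~4 is genuinely longer, as you anticipate; the paper argues in three steps, twice deducing an edge from ``this $P_{10}$ is not induced'' before exhibiting a final $P_{10}$.
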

	\begin{proof}
		Suppose to the contrary that there are induced $u_1-x_1-u_1', u_2-x_2-u_2'$ with $u_1,u_2\in A_{i-2}\cup B_{i-1}$ and $u_1',u_2'\in A_{i+2}\cup B_{i+1}$ as a 2$P_3$ matching with respect to $\bar{X}_{i}$. 
		If case 1 ,2 or 3, then $w_2-w_1-a_{1}-v_{i+2}-u_2'-x_2-u_2-v_{i-2}-u_1-x_1$ is an induced $P_{10}$.
		If case 4, then since $w_3-w_2-a_1-v_{i-2}-u_1-x_1-u_1'-v_{i+2}-u_2'-x_2$ is not an induced $P_{10}$, either $w_3u_1'\in E(G)$ or $w_3u_2'\in E(G)$, say $w_3u_1'\in E(G)$. 
		On the other hand, since $x_2-u_2-v_{i-2}-a_1'-w_1-a_{i+2}-v_{i+2}-u_1'-w_3-w_2$ is not an induced $P_{10}$, $u_1w_1\in E(G)$. 
		Then $x_2-u_2-w_1-a_1-w_2-w_3-u_1'-v_{i+2}-v_{i+3}-v_{i+4}$ is an induced $P_{10}$. 
	\end{proof}

	For a 3-coloring $c$ of $G$, $x\in X_i$ satisfies {\em mono condition} with respect to $(C,c)$ if all neighbors of $x$ in $A_{i-2}\cup B_{i-1}$ are assigned the same color in $c$, and all neighbors of $x$ in $A_{i+2}\cup B_{i+1}$ are also assigned the same color in $c$. 
	Our aim is to find a coloring $c$ such that every vertex in $X_i$ satisfies mono condition with respect to $(C,c)$ for a fixed index $i\in [7]$. 
	However, this may be not attained for $C$. 
	Hence, we construct a new induced odd cycle $C'$ to make every vertex in $X_i\cap X_i'$ satisfy mono condition with respect to $(C',c)$. We use $A',B', A_i'$ and $B'$ to denote $A^{C'}$, $B^{C'}$, $A_i^{C'}$ and $B_i^{C'}$.
	Formally, we prove the following Lemma. 
	
	\begin{lemma}\label{wonderful}
		Let $G$ be a cleaned graph in $\mathcal{G}_{10,7}$ with $C=v_1-v_2-v_3-v_4-v_5-v_6-v_7-v_1$, $C'=v_i'-v_{i+1}'-v_{i+2}-v_{i+3}-v_{i-3}-v_{i-2}-v_{i-1}'-v_i'$ be two induced $C_7$ in $G$ with $v_{i-1}'\in A_{i-2}\cup B_{i-1}$, $v_i'\in X_i$, $v_{i+1}'\in A_{i+2}\cup B_{i+1}$ for some $i\in [7]$. 
		Suppose that $G$ has no type $A$ coloring or type $B$ coloring with respect to $(C',i)$. 
		Let $c$ be a good 3-coloring of $G$, $L$ be an updated subpalette of $L_0$ with $L(v)=\{c(v)\}$ for every $v\in V(C)\cup V(C')$ and $\bar{X}_i$ be the set of those vertices in $X_{i}\cap X'_{i}$ that have list length 3.
		Then there is a 3-coloring $c'$ of $G$ such that for each $v\in V(C)\cup V(C')$, $c'(v)=c(v)$ and for each $x\in \bar{X}_i$, $x$ satisfies mono condition with respect to $(C',c')$. 
	\end{lemma}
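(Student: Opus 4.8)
The plan is to run an extremal recoloring argument. Among all $3$-colorings $c'$ of $G$ that agree with $c$ on $V(C)\cup V(C')$, I would fix one that minimizes a defect $\Phi(c')$; a workable choice for $\Phi(c')$ is the number of triples formed by a vertex $x\in\bar X_i$ and two neighbors $u,u'$ of $x$ on the same side of $C'$ --- both in $A_{i-2}'\cup B_{i-1}'$, or both in $A_{i+2}'\cup B_{i+1}'$ --- with $c'(u)\neq c'(u')$, possibly refined lexicographically by first counting the $x\in\bar X_i$ that violate the mono condition. Since $\Phi(c')=0$ precisely when every $x\in\bar X_i$ satisfies the mono condition with respect to $(C',c')$, it suffices to show the minimizer $c'$ has $\Phi(c')=0$; that $c'$ then meets the conclusion, since it agrees with $c$ on $V(C)\cup V(C')$ by construction.

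Suppose $\Phi(c')>0$ and pick $x\in\bar X_i$ violating the mono condition. First I would record the local picture: since $G$ is $C_3$-free, $N(x)$ is a stable set, and since $x\in X_i\cap X_i'$ it splits into a ``left'' part $L_x\subseteq A_{i-2}'\cup B_{i-1}'$ and a ``right'' part $R_x\subseteq A_{i+2}'\cup B_{i+1}'$ with $N(x)=L_x\cup R_x$, while Claim~\ref{edges in NC} severely restricts how $L_x$ and $R_x$ attach to the rest of $N(C')$ and to $D$. Say the violation is on the left, so $x$ has neighbors $u,u'\in L_x$ with $c'(u)\neq c'(u')$; since both avoid $c'(x)$ they carry the two colors different from $c'(x)$. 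I would then attempt a \emph{local repair}: recolor one of the offending left neighbors to the color of the other, or, failing that, perform a short Kempe-type swap of the two colors $c'(u)$ and $c'(u')$ on the connected component of $u'$ in the corresponding two-colored subgraph. Using that $c'$ is good and that the colors on $N(C')$ are essentially forced by $c$, I would argue that the component involved avoids $V(C)\cup V(C')$, that the outcome is a proper $3$-coloring still agreeing with $c$ on $V(C)\cup V(C')$, that the pair $\{u,u'\}$ is eliminated, and that no new bichromatic pair is created, so $\Phi$ strictly drops --- contradicting minimality.

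The crux, and what I expect to be the main obstacle, is the case analysis bounding the situations where no local repair is available. When the offending left neighbor cannot be recolored and the relevant two-colored component is not confined away from the cycles, tracing the obstruction should force a short induced path ($P_3$, $P_4$, $P_5$, or $P_6$) or an induced $C_7$ through $u$, $u'$, further vertices of $D$, vertices of $X_i$ or $X_{i-2}$, and vertices of $A_{i\pm2}\cup B_{i\pm1}$ and $A_{i\pm3}$, whose color pattern --- pinned down by goodness of $c'$ and by the forced colors on $N(C)$ --- is exactly one of the six configurations in the definition of a type $A$ coloring, or one of the four configurations of a type $B$ coloring, with respect to $(C',i)$; since $G$ admits no such coloring by hypothesis, this is the desired contradiction. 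Carrying this out requires splitting according to whether the left neighbors of $x$ have further neighbors in $D$, in $A_{i\pm3}$, or in $A_{i\pm2}\cup B_{i\pm1}$ --- which is presumably the reason the type $A$ definition has six cases and the type $B$ definition four --- and, at each branch, ruling out every alternative failure mode through the $C_3$-, $C_5$-, $C_9$- and $P_{10}$-freeness of $G$ as packaged in Claim~\ref{edges in NC} and Claim~\ref{edges in nieghbors of D}. Note that Lemma~\ref{A type} and Lemma~\ref{B type} are not cited in this argument; rather, it is precisely their defining configurations that have to be exhibited to reach the contradiction.
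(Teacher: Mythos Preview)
Your high-level strategy --- modify the coloring until every $x\in\bar X_i$ satisfies the mono condition, and show that any obstruction forces a type~$A$ or type~$B$ configuration --- matches the paper's. But the \emph{mechanism} you propose for the modification is where the real difficulty lies, and your proposal glosses over it. The paper does \emph{not} use an extremal argument or Kempe swaps. Instead, for each violating $x$ it constructs an explicit recoloring $c_x$ by defining roughly fifteen auxiliary sets (neighbors of $x$ in $A'_{i-2}\cup B'_{i-1}$ with color~3; their neighbors in $X'_i$ with color~2, split according to whether certain further sets $S_1(v),S_2(v)$ meet $\{v_{i-1},v_i,v_{i+1}\}$; and so on through several layers), and prescribing a new color on each. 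The long case analysis (eight cases in the paper) then shows that if $c_x$ fails to be proper at some edge, the offending edge together with the path back to $x$ realizes one of the ten type~$A$/$B$ patterns. Termination of the iteration is by a monovariant: each application of $c_x$ strictly increases the number of vertices in $A'_{i-2}\cup B'_{i-1}\cup A'_{i+2}\cup B'_{i+1}$ carrying color~2, and once a vertex satisfies mono it never loses it.

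Your proposal has a genuine gap at the step ``no new bichromatic pair is created, so $\Phi$ strictly drops''. A Kempe swap on the $\{c'(u),c'(u')\}$-component of $u'$ can be long and can touch neighbors of \emph{other} vertices of $\bar X_i$; there is no reason a priori that flipping it does not create fresh violations at those vertices, nor that the component avoids $V(C)\cup V(C')$. The paper's explicit construction is engineered precisely so that the cascade of recolorings stays controlled and the monovariant increases; a generic Kempe argument does not give you this for free. If you want to salvage your approach you would need, at minimum, to identify a potential function that a Kempe swap provably decreases --- and it is not clear one exists --- or to replace the Kempe step by a bespoke recoloring rule, at which point you are essentially reconstructing the paper's table.
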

	\begin{proof}
		Without loss of generality, let $c(v_{i-2})=1,c(v'_{i-1})=c(v'_{i+1})=2$ and $c(v_{i+2})=3$. 
		First, we may assume that there is a vertex $x$ in $\bar{X}_i$ which does not satisfy mono condition with respect to $(C',c)$. 
		So, by the definition of mono condition, there are two neighbors of $x$ in $A'_{i-2}\cup B'_{i-1}$ or in $A'_{i+2}\cup B'_{i+1}$, say in $A'_{i-2}\cup B'_{i-1}$, assigned two different colors in $c$. 
		Let $r$ be a neighbor of $x$ in $A'_{i-2}\cup B'_{i-1}$ and $a'_{i+2}$ be a vertex in $N(x)\cap (A'_{i+2}\cup B'_{i+1})$ with $c(r)=c(a'_{i+2})=2$.  
		Define $A^x_{i-2}=\{v\in (A'_{i-2}\cup B'_{i-1})\cap N(x)|c(v)=3\}$ and $X^{x}_{i}=\{v\in X'_{i}\cap N(A^x_{i-2})|c(v)=2\}$. 
		For every vertex $v\in X^x_{i}$, let
		\begin{itemize}
			\item $S_{1.1}(v)$ be the set of those vertices in $(A'_{i+2}\cup B'_{i+1})\cap N(v)$ with color 1 in $c$; 
			\item $S_{1.2}(v)$ be the set of those vertices in $(X'_i\cap N(S_{1.1}(v)))\setminus X^x_{i}$ that are assigned color 2 in $c$; 
			\item $S_{1.3}(v)$ be the set of those vertices in $((A'_{i+2}\cup B'_{i+1})\cap N(S_{1.2}(v)))\setminus S_{1.1}(v)$ with color 1 in $c$; 
			\item $S_{2.1}(v)$ be the set of those vertices in $((A'_{i-2}\cup B'_{i-1})\cap N(v))\setminus A^x_{i-2}$ with color 3 in $c$; 
			\item $S_{2.2}(v)$ be the set of those vertices in $(X'_i\cap N(S_{2.1}(v)))\setminus X^x_{i}$ that are assigned color 2 in $c$. 
		\end{itemize} 
		Let $S_1(v)=S_{1.1}(v)\cup S_{1.2}(v)\cup S_{1.3}(v)$, $S_2(v)=S_{2.1}(v)\cup S_{2.2}(v)$ and $T(C)=\{v_{i-1},v_{i},v_{i+1}\}$. 
		We then construct a 3-coloring $c_x$ of $G$ from $c$. 
		To illustrate $c_x$, we use the Table \ref{table:1} to represent how we obtain $c_x$ from $c$ and Figure \ref{fig:table set} to represent some sets in Table \ref{table:1}. 
		\begin{table}[h]
			\centering
			\begin{tabular}{c |c |c} 
				\hline
				Set Name & Definition & Color assigned in $c_x$ \\ 
				\hline
				$A^x_{i-2}$ & $\{v\in (A'_{i-2}\cup B'_{i-1})\cap N(x)|c(v)=3\}$ & 2 \\ 
				\hline 
				$X^x_{i-2}$ & $\{v\in X'_{i-2}\cap N(A^x_{i-2})|c(v)=2\}$ & 1 \\ 
				\hline
				$X^x_{i+3}$ & $\{v\in X'_{i+3}\cap N(A^x_{i-2})|c(v)=2\}$ & 1 \\ 
				\hline
				$X^{x,+}_{i}$ & $\{v\in X'_{i}\cap N(A^x_{i-2})|c(v)=2,S_1(v)\cap T=\emptyset\}$ & 1 \\ 
				\hline
				$X^{x,-}_{i}$ & $\{v\in X'_{i}\cap N(A^x_{i-2})|c(v)=2,S_1(v)\cap T\neq \emptyset,S_2(v)\cap T=\emptyset\}$ & 3 \\ 
				\hline
				$A^{x,+}_{i+2}$ & $\{v\in (A'_{i+2}\cup B'_{i+1})\cap N(X^{x,+}_{i})|c(v)=1\}$ & 2 \\  
				\hline
				$A^{x,+}_{i+3}$ & $\{v\in A'_{i+3}\cap N(A^{x,+}_{i+2})|c(v)=2\}$ & 3 \\  
				\hline
				$X^{x,+}_{i-3}$ & $\{v\in X'_{i-3}\cap N(A^{x,+}_{i+2})|c(v)=2\}$ & 3 \\
				\hline
				$X^{x,+}_{i+2}$ & $\{v\in X'_{i+2}\cap N(A^{x,+}_{i+2})|c(v)=2\}$ & 3 \\
				\hline
				$Y^{x,+}$ & $\{v\in Y'\cap N(A^{x,+}_{i+2})|c(v)=2\}$ & 3 \\
				\hline
				$X^{x,++}_{i}$ & $\{v\in (X'_{i}\cap N(A^{x,+}_{i+2}))\setminus (X^{x,+}_{i}\cup X^{x,-}_{i})|c(v)=2\}$ & 1 \\
				\hline
				$A^{x,++}_{i+2}$ & $\{v\in ((A'_{i+2}\cup B'_{i+1})\cap N(X^{x,++}_{i}))\setminus A^{x,+}_{i+2}|c(v)=1\}$ & 2 \\ 
				\hline
				$A^{x,-}_{i-2}$ & $\{v\in ((A'_{i-2}\cup B'_{i-1})\cap N(X^{x,-}_{i}))\setminus A^x_{i-2}|c(v)=3\}$ & 2 \\  
				\hline
				$X^{x,--}_{i}$ & $\{v\in (X'_{i}\cap N(A^{x,-}_{i-2}))\setminus (X^{x,+}_{i}\cup X^{x,-}_{i}\cup X^{x,++}_{i})|c(v)=2\}$ & 3 \\
				\hline
				$X^{x,-}_{i+3}$ & $\{v\in (X'_{i+3}\cap N(A^{x,-}_{i-2}))\setminus X^{x}_{i+3}|c(v)=2\}$ & 1 \\
				\hline
			\end{tabular}
			\caption{Table to illustrate $c_x$ different from $c$}
			\label{table:1}
		\end{table}
		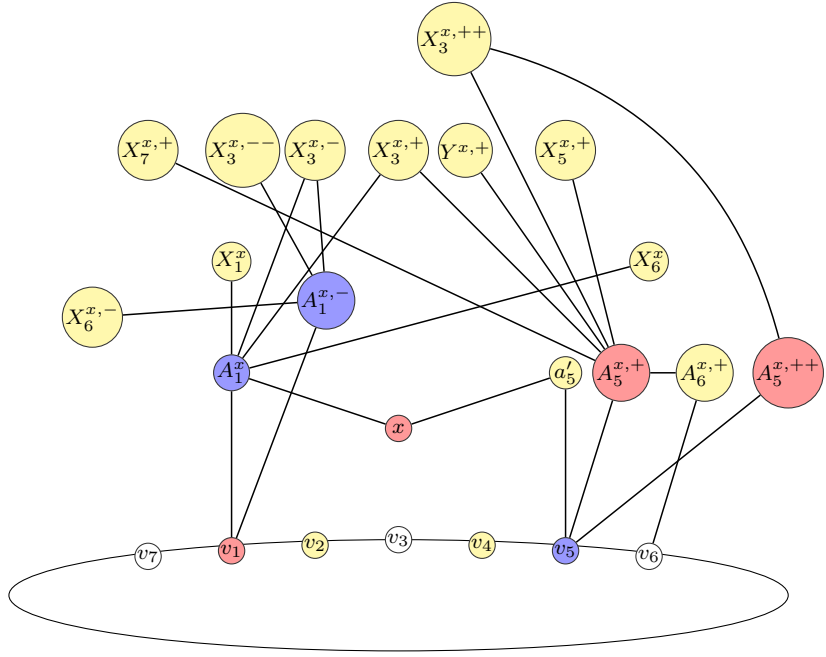
\begin{figure}[h!]
			\centering
			\begin{tikzpicture}[scale=.74]

				\draw (0,0) ellipse (7 and 1);
				
				\node[circle, draw=black!80, inner sep=0mm, minimum size=3.5mm, fill =white] (a_{3}) at (0,1){\scriptsize $v_3$};
				\node[circle, draw=black!80, inner sep=0mm, minimum size=3.5mm, fill=yellow!40] (a_{4}) at (1.5,.9){\scriptsize $v_4$};
				\node[circle, draw=black!80, inner sep=0mm, minimum size=3.5mm, fill=blue!40] (a_{5}) at (3,.8){\scriptsize $v_5$};
				\node[circle, draw=black!80, inner sep=0mm, minimum size=3.5mm, fill=white] (a_{6}) at (4.5,.7){\scriptsize $v_6$};
				\node[circle, draw=black!80, inner sep=0mm, minimum size=3.5mm, fill=white] (a_{7}) at (-4.5,.7){\scriptsize $v_7$};
				\node[circle, draw=black!80, inner sep=0mm, minimum size=3.5mm, fill=red!40] (a_{1}) at (-3,.8){\scriptsize $v_1$};
				\node[circle, draw=black!80, inner sep=0mm, minimum size=3.5mm, fill=yellow!40] (a_{2}) at (-1.5,.9){\scriptsize $v_2$};
				
				\node[circle, draw=black!80, inner sep=0mm, minimum size=3.5mm, fill=blue!40] (a_{8}) at (-3,4){\scriptsize $A^x_{1}$};
				\node[circle, draw=black!80, inner sep=0mm, minimum size=3.5mm, fill=yellow!40] (a_{11}) at (-3,6){\scriptsize $X^x_{1}$};
				\node[circle, draw=black!80, inner sep=0mm, minimum size=3.5mm, fill=yellow!40] (a_{12}) at (4.5,6){\scriptsize $X^x_{6}$};
				\node[circle, draw=black!80, inner sep=0mm, minimum size=3.5mm, fill=yellow!40] (a_{13}) at (0,8){\scriptsize $X^{x,+}_{3}$};
				\node[circle, draw=black!80, inner sep=0mm, minimum size=3.5mm, fill=red!40] (a_{14}) at (4,4){\scriptsize $A^{x,+}_{5}$};
				\node[circle, draw=black!80, inner sep=0mm, minimum size=3.5mm, fill=yellow!40] (a_{15}) at (5.5,4){\scriptsize $A^{x,+}_{6}$};
				\node[circle, draw=black!80, inner sep=0mm, minimum size=3.5mm, fill=yellow!40] (a_{16}) at (-4.5,8){\scriptsize $X^{x,+}_{7}$};
				\node[circle, draw=black!80, inner sep=0mm, minimum size=3.5mm, fill=yellow!40] (a_{17}) at (3,8){\scriptsize $X^{x,+}_{5}$};
				\node[circle, draw=black!80, inner sep=0mm, minimum size=3.5mm, fill=yellow!40] (a_{18}) at (1.2,8){\scriptsize $Y^{x,+}$};
				\node[circle, draw=black!80, inner sep=0mm, minimum size=3.5mm, fill=yellow!40] (a_{19}) at (1,10){\scriptsize $X^{x,++}_{3}$};
				\node[circle, draw=black!80, inner sep=0mm, minimum size=3.5mm, fill=red!40] (a_{20}) at (7,4){\scriptsize $A^{x,++}_{5}$};
				\node[circle, draw=black!80, inner sep=0mm, minimum size=3.5mm, fill=yellow!40] (a_{9}) at (3,4){\scriptsize $a_5'$};
				\node[circle, draw=black!80, inner sep=0mm, minimum size=3.5mm, fill=red!40] (a_{10}) at (0,3){\scriptsize $x$};
				
				\node[circle, draw=black!80, inner sep=0mm, minimum size=3.5mm, fill=yellow!40] (a_{21}) at (-1.5,8){\scriptsize $X^{x,-}_{3}$};
				\node[circle, draw=black!80, inner sep=0mm, minimum size=3.5mm, fill=blue!40] (a_{22}) at (-1.3,5.3){\scriptsize $A^{x,-}_{1}$};
				\node[circle, draw=black!80, inner sep=0mm, minimum size=3.5mm, fill=yellow!40] (a_{23}) at (-2.8,8){\scriptsize $X^{x,--}_{3}$};
				\node[circle, draw=black!80, inner sep=0mm, minimum size=3.5mm, fill=yellow!40] (a_{24}) at (-5.5,5){\scriptsize $X^{x,-}_{6}$};
				
				\draw[line width=0.2mm, black] (a_{8}) to  (a_{1});
				\draw[line width=0.2mm, black] (a_{8}) to  (a_{10});
				\draw[line width=0.2mm, black] (a_{9}) to  (a_{10});
				\draw[line width=0.2mm, black] (a_{9}) to  (a_{5});
				
				\draw[line width=0.2mm, black] (a_{8}) to  (a_{11});
				\draw[line width=0.2mm, black] (a_{8}) to  (a_{12});
				\draw[line width=0.2mm, black] (a_{8}) to  (a_{13});
				
				\draw[line width=0.2mm, black] (a_{13}) to  (a_{14});
				
				\draw[line width=0.2mm, black] (a_{14}) to  (a_{5});
				\draw[line width=0.2mm, black] (a_{14}) to  (a_{15});
				\draw[line width=0.2mm, black] (a_{14}) to  (a_{16});
				\draw[line width=0.2mm, black] (a_{14}) to  (a_{17});
				\draw[line width=0.2mm, black] (a_{14}) to  (a_{18});
				\draw[line width=0.2mm, black] (a_{14}) to  (a_{19});

				\draw[line width=0.2mm, black] (a_{15}) to  (a_{6});
				\draw[line width=0.2mm, black] (a_{20}) to  (a_{5});
				\draw[bend left=30, line width=0.2mm, black] (a_{19}) to  (a_{20});
				
				\draw[line width=0.2mm, black] (a_{8}) to  (a_{21});
				\draw[line width=0.2mm, black] (a_{21}) to  (a_{22});
				\draw[line width=0.2mm, black] (a_{1}) to  (a_{22});
				\draw[line width=0.2mm, black] (a_{22}) to  (a_{23});
				\draw[line width=0.2mm, black] (a_{22}) to  (a_{24});


			\end{tikzpicture}
			\caption{Illustration of sets in Table \ref{table:1} when $i=3$ (every vertex in red set is colored 1 in $c$, every vertex in yellow set is colored 2 in $c$, every vertex in blue set is colored 3 in $c$ and the color of every white vertex is uncertain).}

			\label{fig:table set}
		\end{figure}
		Thus, by the definition of $c_x$ and all neighbors of $x$ in $A'_{i-2}\cup B'_{i-1}$ and $A'_{i+2}\cup B'_{i+1}$ are assigned color 2 in $c_x$. 
		Hence, $x$ satisfies mono condition with respect to $(C',c_x)$. 
		On the other hand, if there is a vertex $v$ in $V(C)\cup V(C')$ such that $c_x(v)\neq c(v)$, then $v\in T(C)\cap (A_{i-2}^x\cup X_{i-2}^x\cup X_{i+3}^x\cup X_i^{x,+}\cup X_i^{x,-})$. 
		If there is a vertex $v\in T(C)\cap A_{i-2}^x$, then $v=v_{i-1}$, $L(v_{i-1})=\{3\}$ and hence, $|L(x)|\leq 2$, a contradiction. 
		If there is a vertex $v\in T(C)\cap (X_{i-2}^x\cup X_{i+3}^x\cup X_i^{x,+}\cup X_i^{x,-})$, then $v=v_{i}$, $v\in X^{x,+}_{i}\cup X^{x,-}_{i}$ and $L(v)=\{2\}$. 
		Let $v'$ be a neighbor of $v$ in $A_{i-2}^x$. 
		Since $L$ is updated,  $L(v')=\{3\}$ and hence, $|L(x)|\leq 2$, a contradiction. 
		It implies that, for each $v\in V(C)\cup V(C')$, we have $c_x(v)=c(v)$.

		\begin{claim}\label{proper}
			$c_x$ is a 3-coloring of $G$. 
		\end{claim}
		\begin{proof}
			Let $S$ be the set of all vertices that have different colors in $c$ and $c_x$. 
			Suppose to the contrary that $c_x$ is a coloring. 
			So there is a vertex $u\in S$ with a neighbor $v$ such that $c_x(u)=c_x(v)$. 
			
			\vspace{0.2cm}
			\noindent (1) $v\notin S$. 
			\vspace{0.2cm}
			
			If $c_x(u)=c_x(v)=1$, then both $u$ and $v$ are in $X'$, contrary to the fact that $u$ and $v$ are adjacent. 
			If $c_x(u)=c_x(v)=2$, then $u,v\in A'_{i-2}\cup B'_{i-1}\cup A'_{i+2}\cup B'_{i+1}$. 
			By Claim \ref{edges in NC}, both $u$ and $v$ in $A_{i-2}^x$, and hence, $x-u-v-x$ is an induced $C_3$. 
			So we may assume that $c_x(u)=c_x(v)=3$. 
			Thus, $u,v\in X_i^{x,+}\cup A_{i+3}^{x,+}\cup X_{i-3}^{x,+}\cup X_{i+2}^{x,+}\cup Y^{x,+}\cup X_i^{x,--}$. 
			If $u$ or $v$ in $X_i^{x,+}\cup X_{i-3}^{x,+}\cup X_{i+2}^{x,+}\cup X_i^{x,--}$, say $u$, then $u$ has no neighbors in $X_i^{x,+}\cup A_{i+3}^{x,+}\cup X_{i-3}^{x,+}\cup X_{i+2}^{x,+}\cup Y^{x,+}\cup X_i^{x,--}$. 
			Thus, both $u,v$ are in $A_{i+3}^{x,+}\cup Y^{x,+}$. 
			If $u$ or $v$ in $A_{i+3}^{x,+}$, say $u\in A_{i+3}^{x,+}$, then $v\in Y^{x,+}$. 
			Hence, let $v'$ be a neighbor of $v$ in $A_{i+2}^{x,+}$. 
			Then $v-v'-v_{i+2}-v_{i+3}-u-v$ is an induced $C_5$. 
			So, we may assume that both $u$ and $v$ are in $Y^{x,+}$. 
			By the definition of $Y^{x,+}$, let $u',v'$ be the neighbors of $u,v$ in $A_{i+2}^{x,+}$, respectively. 
			Then $u-u'-v_{i+2}-v'-v-u$ is an induced $C_5$ or contains an induced $C_3$ if $u'=v'$. 
			This completes the proof of (1).

			\vspace{0.2cm}
			\noindent (2) $u\notin X^{x,--}_{i}\cup X^{x,-}_{i+3}$. 
			\vspace{0.2cm}
			
			Suppose that $u\in X^{x,--}_{i}\cup X^{x,-}_{i+3}$. 
			If $u\in X^{x,-}_{i+3}$, then $c_x(u)=c_x(v)=1$, and hence, $v\in A'_{i+1}\cup B'_{i+2}$ since $c_x(v_{i-2})=1$.  
			Then $c_x$ is a type $(A,2)$ coloring with respect to $(C',i)$. 
			So we may assume that $u\in X^{x,--}_{i}$. 
			Hence, $c_x(u)=c_x(v)=3$ and $v\in A_{i-2}\cup B_{i-1}$. 
			Let $a^{x,-}_{i-2}\in A^{x,-}_{i-2}$, $x^{x,-}_i\in X^{x,-}_i$ and $a_{i-2}^x\in A_{i-2}^x$ such that $x-a_{i-2}^x-x^{x,-}_i-a^{x,-}_{i-2}-u$ is an induced $P_5$ in $G$. 
			By Claim \ref{edges in NC}, since $v_{i-3}-v_{i+3}-v_{i+2}-a'_{i+2}-x-a_{i-2}^x-x^{x,-}_i-a^{x,-}_{i-2}-u-v$ is not an induced $P_{10}$, either $vx\in E(G)$ or $vx^{x,-}_i\in E(G)$. 
			Hence, $v\in S$, which is contrary to (1).   
			This completes the proof of (2). 
			
			\vspace{0.2cm}
			\noindent (3) $u\notin A^{x,-}_{i-2}$. 
			\vspace{0.2cm}
			
			Suppose that $u\in A^{x,-}_{i-2}$. 
			So $c(v)=c_x(v)=c_x(u)=2$. 
			Let $x^{x,-}_i\in X^{x,-}_i$ and $a_{i-2}^x\in A_{i-2}^x$ such that $x-a_{i-2}^x-x^{x,-}_i-u$ is an induced $P_4$ in $G$. 
			Since $c(v)=2$, $v\notin S$ and by Claim \ref{edges in NC}, $v\in A'_{i-3}\cup X'_{i-2}\cup Y'$.  
			We first assume that $v\in A'_{i-3}$. 
			Since $x-a_{i-2}^x-x^{x,-}_i-u-v-v_{i-3}-v_{i+3}-v_{i+2}-a'_{i+2}-x$ is not an induced $C_9$, $va_{i-2}^x\in E(G)$, and hence $c_x$ is a type $(A,4)$ coloring with respect to $(C',i)$.
			We then assume that $v\in X'_{i-2}$. 
			Then $c_x$ is a type $(A,5)$ coloring with respect to $(C',i)$. 
			So we may assume that $v\in Y'$. 
			By the definition of $Y'$, there is a neighbor $s$ of $v$ in $Y'$. 
			Since $s-v-u-x^{x,-}_i-a_{i-2}^x-x-a'_{i+2}-v_{i+2}-v_{i+3}-v_{i-3}$ is not an induced $P_{10}$, $sa'_{i+2}\in E(G)$. 
			Then $c_x$ is a type $(A,6)$ coloring with respect to $(C',i)$.
			This completes the proof of (3). 
			
			\vspace{0.2cm}
			\noindent (4) $u\notin A^{x,++}_{i+2}$
			\vspace{0.2cm}
			
			Suppose that $u\in A^{x,++}_{i+2}$. 
			So $c(v)=c_x(v)=c_x(u)=2$. 
			Let $x_i^{x,++}\in X_i^{x,++}$, $a_{i+2}^{x,+}\in A_{i+2}^{x,+}$, $x_i^{x,+}\in X_i^{x,+}$ and $a_{i-2}^x\in A_{i-2}^x$ such that $x-a_{i-2}^x-x_i^{x,+}-a_{i+2}^{x,+}-x_i^{x,++}-u$ is an induced $P_6$ in $G$. 
			Since $c_x(v)=2$ and by Claim \ref{edges in NC}, $v\in A'_{i+3}\cup D'$.   
			If $v\in D'$, then $c_x$ is a type $(A,1)$ coloring with respect to $(C',i)$. 
			So we may assume that $v\in A'_{i+3}$. 
			Then, $v_{i-3}-v_{i+3}-v-u-x_i^{x,++}-a_{i+2}^{x,+}-x_i^{x,+}-a_{i-2}^x-x-r$ is an induced $P_{10}$. 
			This completes the proof of (4). 
			
			\vspace{0.2cm}
			\noindent (5) $u\notin X^{x,++}_{i}\cup Y^{x,+}\cup X^{x,+}_{i+2}\cup X^{x,+}_{i-3}\cup A^{x,+}_{i+3}$. 
			\vspace{0.2cm}
			
			Suppose that $u\in X^{x,++}_{i}\cup Y^{x,+}\cup X^{x,+}_{i+2}\cup X^{x,+}_{i-3}\cup A^{x,+}_{i+3}$. 
			Let $a_{i+2}^{x,+}\in A_{i+2}^{x,+}$, $x_i^{x,+}\in X_i^{x,+}$ and $a_{i-2}^x\in A_{i-2}^x$ such that $x-a_{i-2}^x-x_i^{x,+}-a_{i+2}^{x,+}-u$ is an induced $P_5$ in $G$. 
			We first assume that $u\in X^{x,++}_{i}$.
			Then $c_x(u)=1$ and $u\in X'_{i}$, and hence, $c(v)=1$ and $v\in A'_{i+2}\cup B'_{i+1}$. 
			That is, $v\in A^{x,++}_{i+2}\subseteq S$, which is contrary to (1). 
			We then assume that $u\in Y^{x,+}\cup X^{x,+}_{i+2}\cup X^{x,+}_{i-3}$. 
			Since $c(v)=c_x(u)=3$, $v\in A'_{i-3}\cup A'_{i-2}\cup A'_{i-1}\cup A'_i\cup B'_{i-3}\cup B'_{i-2}\cup B'_{i-1}\cup D'$. 
			Moreover, if $v\in A'_{i-2}\cup B'_{i-3}\cup B'_{i-1}\cup D'$, then $u\in Y^{x,+}$. 
			By the definition of $Y'$, there is a neighbor $s\in D'$ of $u$. 
			If $c(s)=3$, then $c_x$ is a type $(B,1)$ coloring with respect to $(C',i)$. 
			So $v\notin D'$. 
			If $c(s)=1$ and $s$ has no neighbors in $A'_{i-2}\cup B'_{i-1}$, then $c_x$ is a type $(B,2)$ coloring with respect to $(C',i)$. 
			Thus, we have $c(s)=1$ and $s$ has a neighbor in $A'_{i-2}\cup B'_{i-1}$. 
			Hence, if $u\in Y^{x,+}$, then $u$ has no neighbors in $A'_{i-2}\cup B'_{i-1}\cup B'_{i-3}$. 
			In a word, if $u\in Y^{x,+}\cup X^{x,+}_{i+2}\cup X^{x,+}_{i-3}$, $v\in A'_{i-3}\cup A'_{i-1}\cup A'_i\cup B'_{i-2}$. 
			If $v\in A'_{i-1}\cup B'_{i-2}$, then $v_{i-2}-v'_{i-1}-v-u-a_{i+2}^{x,+}-v_{i+2}-a'_{i+2}-x-a_{i-2}^x-v_{i-2}$ is an induced $C_9$ in $G$. 
			If $v\in A'_{i-3}$, then $v_{i-2}-v_{i-3}-v-u-a_{i+2}^{x,+}-v_{i+2}-a'_{i+2}-x-a_{i-2}^x-v_{i-2}$ is an induced $C_9$ in $G$.
			If $v\in A'_i$, then $c(v'_i)=1$ and hence, $a_{i+2}^{x,+}\in A'_{i+2}$. 
			Then $u-v-v'_i-v'_{i-1}-v_{i-2}-v_{i-3}-v_{i+3}-v_{i+2}-a_{i+2}^{x,+}-u$ is an induced $C_9$. 
			Thus, we may assume that $u\in A^{x,+}_{i+3}$. 
			Since $c(v)=3$ and by Claim \ref{edges in NC}, $v\in A'_{i-3}\cup A'_{i-1}\cup A'_{i}\cup B'_{i-2}\cup D'$. 
			If $v\in A'_{i-1}\cup B'_{i-2}$, then $v_{i-2}-v'_{i-1}-v-u-a_{i+2}^{x,+}-v_{i+2}-a'_{i+2}-x-a_{i-2}^x-v_{i-2}$ is an induced $C_9$ in $G$. 
			If $v\in A'_{i-3}$, then $v_{i-2}-v_{i-3}-v-u-a_{i+2}^{x,+}-v_{i+2}-a'_{i+2}-x-a_{i-2}^x-v_{i-2}$ is also an induced $C_9$ in $G$.
			If $v\in A'_i$, then since $c(v)=3$, $c(v'_i)=1$. 
			Hence, $a'_{i+2},a_{i+2}^{x,+}\in A'_5$. 
			Then $v_{i-3}-v_{i-2}-v'_{i-1}-v'_{i}-v-u-a_{i+2}^{x,+}-v_{i+2}-a'_{i+2}-x$ is an induced $P_{10}$. 
			If $v\in D'$, then $c_x$ is a type $(B,3)$ coloring with respect to $(C',i)$. 
			This completes the proof of (5). 
			
			\vspace{0.2cm}
			\noindent (6) $u\notin A^{x,+}_{i+2}$. 
			\vspace{0.2cm}
			
			Suppose that $u\in A^{x,+}_{i+2}$. 
			So $c(v)=c_x(v)=c_x(u)=2$. 
			Let $x_i^{x,+}\in X_i^{x,+}$ and $a_{i-2}^x\in A_{i-2}^x$ such that $x-a_{i-2}^x-x_i^{x,+}-u$ is an induced $P_4$ in $G$.
			Since $c(v)=2$ and by Claim \ref{edges in NC}, $v\in Y'\cup X'_{i}\cup X'_{i+2}\cup X'_{i-3}\cup A'_{i+3}$. 
			Hence, $v\in S$, a contradiction.
			This completes the proof of (6). 
			
			\vspace{0.2cm}
			\noindent (7) $u\notin X_i^{x,-}\cup X_i^{x,+}\cup X^x_{i+3}\cup X^x_{i-2}$. 
			\vspace{0.2cm}
			
			Suppose that $u\in X_i^{x,-}\cup X_i^{x,+}\cup X^x_{i+3}\cup X^x_{i-2}$. 
			We first assume that $u\in X_i^{x,+}\cup X^x_{i+3}\cup X^x_{i-2}$. 
			So $c(v)=c_x(v)=c_x(v)=1$. 
			Let $a_{i-2}^x\in A_{i-2}^x$ such that $x-a_{i-2}^x-v$ is an induced $P_3$ in $G$. 
			Since $c(v)=1$ and $v\notin S$, $v\in A'_i\cup A'_{i+1}\cup A'_{i+3}\cup B'_{i+2}$. 
			If $v\in A'_{i+1}\cup B'_{i+2}$, then $c_x$ is a type $(A,2)$ coloring with respect to $(C',i)$. 
			If $v\in A'_{i+3}$, then $c_x$ is a type $(A,3)$ coloring with respect to $(C',i)$. 
			If $v\in A'_i$, then since $c(v)=1$, $c_x(v'_i)=3$. 
			Hence, $a_{i-2}^x\in A'_{i-2}$. 
			Then $u-v-v'_{i}-v'_{i+1}-v_{i+2}-v_{i+3}-v_{i-3}-v_{i-2}-a_{i-2}^x-u$ is an induced $C_9$. 
			So we may assume that $u\in X_i^{x,-}$
			Hence, $c(v)=c_x(v)=c_x(u)=3$ and $v\in A'_{i-2}\cup B'_{i-1}$. 
			Thus, $v\in A_{i-2}^{x,-}\subseteq S$, contrary to (1). 
			This completes the proof of (7).  
			
			\vspace{0.2cm}
			\noindent (8) $u\notin A_{i-2}^x$. 
			\vspace{0.2cm} 
			
			Suppose that $u\in A_{i-2}^x$. 
			So $c(v)=c_x(v)=c_x(u)=2$. 
			Since $c(v)=2$, $v\notin X^x_{i+3}\cup X^x_{i-2}$ and by Claim \ref{edges in NC}, $v\in A'_{i-3}\cup X'_i\cup Y'$. 
			If $v\in A'_{i-3}$, then $c_x$ is a type $(A,4)$ coloring with respect to $(C',i)$. 
			If $v\in Y'$, then $c_x$ is a type $(B,4)$ coloring with respect to $(C',i)$. 
			So we may assume that $v\in X'_i$. 
			Since $v\notin X_i^{x,-}\cup X_i^{x,+}$, $S_1(v)\neq \emptyset$ and $S_2(v)\neq \emptyset$. 
			That is, either $T(C)\cap S_{1.1}(v)\neq \emptyset$ or $T(C)\cap S_{1.2}(v)\neq \emptyset$ or $T(C)\cap S_{1.3}(v)\neq \emptyset$. 
			
			We first assume that $T\cap S_{1.1}(v)\neq \emptyset$. 
			By the definition of $S_1(v)$, we have $v\in X'_i$, $T(C)\cap S_{1.1}(v)=\{v_{i+1}\}$ and $c(v_{i+1})=1$. 
			On the other hand, since $S_2(v)\cap T\neq \emptyset$, either $S_2(v)\cap T\cap S_{2.1}(v)\neq \emptyset$ or $S_2(v)\cap T\cap S_{2.2}(v)\neq \emptyset$. 
			If $T\cap S_{2.1}(v)\neq \emptyset$, then $T(C)\cap S_{2.1}(v))=\{v_{i-1}\}$. 
			Hence, $c(v_{i-1})=3$. 
			Since $v$ is adjacent to both $v_{i-1}$ and $v_{i+1}$, $L(v)=\{2\}$, $L(u)=\{3\}$. 
			Hence, $|L(x)|\leq 2$, a contradiction. 
			If $T(C)\cap S_{2.2}(v)\neq \emptyset$, then $T(C)\cap S_{2.2}(v))=\{v_{i}\}$ and hence, $c(v_{i})=2$ and there is a vertex $v'$ in $N(v_{i-2})\cap N(v_i)\cap N(v)$. 
			Since $c(v_{i-2})=1$ and $c(v_{i-2})=2$, then $L(v')=\{3\}$, $L(v)=\{2\}$, $L(u)=\{3\}$ and hence, $|L(x)|\leq 2$, a contradiction. 
			
			We then assume that $T\cap S_{1.2}(v)\neq \emptyset$. 
			By the definition of $S_{1.2}(v)$, we have $v\in X'_i$, $T\cap S_{1.2}(v)=\{v_{i}\}$, $c(v_{i})=2$ and there is vertex $v'$ in $N(v)\cap N(v_{i+2})\cap N(v_{i})$. 
			Hence, we have $L(v')=\{1\}$.
			If $T(C)\cap S_{2.1}(v)\neq \emptyset$, then $T(C)\cap S_{2.1}(v))=\{v_{i-1}\}$ and $L(v_{i-1})=\{3\}$. 
			So, $L(v)=\{2\}$, $L(u)=\{3\}$ and hence, $|L(x)|\leq 2$, a contradiction.  
			If $T(C)\cap S_{2.2}(v)\neq \emptyset$, then $T(C)\cap S_{2.2}(v))=\{v_{i}\}$ and hence, $L(v_{i})=\{2\}$ and there is a vertex $v'$ in $N(v_{i-2})\cap N(v_i)\cap N(v)$. 
			Since $L(v_{i-2})=\{1\}$ and $L(v_{i-2})=\{2\}$, then $L(v')=\{3\}$, $L(v)=\{2\}$, $L(u)=\{3\}$ and hence, $|L(x)|\leq 2$, a contradiction.
			
			So we may assume that $T(C)\cap S_{1.3}(v)\neq \emptyset$. 
			By the definition of $S_{1.3}(v)$, we have $v\in X'_i$, $T(C)\cap S_{1.3}(v)=\{v_{i+1}\}$, $L(v_{i+1})=\{1\}$ and there are vertices $v'\in A'_{i+2}\cup B'_{i+1}$ and $v''\in X'_i$ with $L(v')=\{1\}$, $L(v'')=\{2\}$ such that $x-u-v-v'-v''-v_{i+1}$ is an induced $P_6$ in $G$. 
			Since either $T(C)\cap S_{2.1}(v)\neq \emptyset$ or $T(C)\cap S_{2.2}(v)\neq \emptyset$, either $v_{i-1}\in S_2(v)$ with $L(v_{i-1})=\{3\}$ and hence, $L(v_{i})=\{2\}$, or $v_{i}\in S_2(v)$ with $L(v_{i})=\{2\}$. 
			Thus, $L(v_{i})=\{2\}$, and hence, $v_i\in X'_i$. 
			Then $c_x$ is a type $(A,1)$ coloring with respect to $(C',i)$. 
			This completes the proof of (8).
			By (1),(2),(3),(4),(5),(6),(7) and (8), this completes the proof of Claim \ref{proper}.
		\end{proof}

		Then $c_x$ is a 3-coloring of $G$ such that $x$ satisfies mono condition with respect to $(C',c_x)$ and for each $v\in V(C)\cup V(C')$, $c_x(v)=c(v)$. 
		By replacing the original coloring with newly derived one, we ultimately obtain the desired coloring of $G$. 
		Since in each step, at least one vertex in $A'_{i-2}\cup B'_{i-1}\cup B'_{i+1}\cup A'_{i+2}$ with color changed to 2 and if a vertex is changed to satisfy mono condition, then it will always satisfy mono condition in the later step, this progress will terminate. 
		This completes the proof of Lemma \ref{wonderful}. 	
	\end{proof}

    Now we are ready to prove Lemma \ref{one set}. 
    Note that for any 3-coloring $c$ of $G$, either every vertex in $X_i$ satisfies mono condition with respect to $(C,c)$ or there is a vertex $x$ that is not satisfy. 
    For the former case, we construct a restriction of $(G,L,\emptyset)$. 
    In the later case, $x$ and its two neighbors in $A_{i-2}\cup B_{i-1}$ and $A_{i+2}\cup B_{i-+1}$ join with $C$ construct a new induced odd cycle $C'$. 
    By guessing $c$ is a type $A$ or $B$ coloring with respect to $C'$, we construct two sets of restrictions of $(G,L,\emptyset)$,  respectively. 
    If $G$ is 3-colorable but has no type $A$ or $B$ coloring with respect to $C'$, then by Lemma \ref{wonderful}, there must be an coloring of $G$ such that every vertex in $\bar{X}_i$ satisfies mono condition with respect to $C'$. 
    Thus, we construct another set of restrictions of $(G,L,\emptyset)$. 
    Let $\mathcal{R}$ be the set of all restrictions we obtained. 
    Then $R$ is colorable if and only if $(G,L,\emptyset)$ is colorable. 
	\begin{proof}[Proof of Lemma \ref{one set}]
		If $L(v_{i-2})=L(v_{i+2})$, then every vertex in $X_i$ with list length 3 is reducible, contrary to the condition that $L$ is non-reducible for $C$. 
		So we may assume that $L(v_{i-2})\neq L(v_{i+2})$, say $L(v_{i-2})=\{1\}$ and $L(v_{i+2})=\{2\}$. 
		Let $R_1=(G\setminus \bar{X}_i,L,Z_1)$, where $\bar{X}_i$ is the set of those vertices in $X_i$ with list length 3 in $L$ and $Z_1=\bigcup_{x\in \bar{X}_i}\{N(x)\cap (A_{i-2}\cup B_{i-1}),N(x)\cap (A_{i+2}\cup B_{i+1})\}$. 
		Note that any coloring of $R_1$ can be extended to a coloring of $(G,L,\emptyset)$ in $O(|V(G)|)$ time.

		\vspace{0.2cm}
		\noindent (1) Construction of $\mathcal{R}_1$. 
		\vspace{0.2cm}

		For every $x\in \bar{X}_i$, $a_{i-2}\in N(x)\cap (A_{i-2}\cup B_{i-1})$, $a_{i+2}\in N(x)\cap (A_{i+2}\cup B_{i+1})$, we construct a palette $L'=L_{x,a_{i-2},a_{i+2}}$, depending on $x,a_{i-2}$ and $a_{i+2}$. 
		Define $L'$ by setting $L'(x)=L(v_{i-2})$, $L'(a_{i-2})=L'(a_{i+2})=L(v_{i-1})$ and leaving $L'(v)=L(v)$ for every $v\in V(G')\setminus \{x,a_{i-2},a_{i+2}\}$. 
		For the symmetry case, define $L'=L_{x,a_{i-2},a_{i+2}}$ by setting $L'(x)=L(v_{i+2})$, $L'(a_{i-2})=L'(a_{i+2})=L(v_{i-1})$ and leaving $L'(v)=L(v)$ for every $v\in V(G')\setminus \{x,a_{i-2},a_{i+2}\}$. 
		The set of $\mathcal{L}_1$ will be the set of all updated palettes from $L'$ obtained in this way. 
		So the number of palettes in $\mathcal{L}_1$ is $O(|V(G)|^3)$. 
		Let $\mathcal{R}_1=R_1\cup (G,\mathcal{L}_1,\emptyset)$. 
		Note that $(G,L,\emptyset)$ is colorable if and only if $R_1\cup (G,\mathcal{L}_1,\emptyset)$ is colorable. 
		The total time for constructing $\mathcal{R}_1$ thus amounts to $O(|V(G)|^3(|V(G)|+|E(G)|))$.

		\vspace{0.2cm}
		Let $(G,L',\emptyset)$ be a element of $(G,\mathcal{L}_1,\emptyset)$, where $L'$ depending on $x,a_{i-2}$ and $a_{i+2}$. 
		Set $C'=v_{i-3}-v_{i-2}-a_{i-2}-x-a_{i+2}-v_{i+2}-v_{i+3}-v_{i-3}$. 
		Note that $C'$ is also an induced $C_7$ in $G$ and any coloring of $(G,L',\emptyset)$ is a good coloring with respect to $(C',i)$. 
		
		\vspace{0.2cm}
		\noindent (2) Construction of $\mathcal{R}_{2.1}$. 
		\vspace{0.2cm}
		
		For every $T$ as claimed in a type $(A,j)$ coloring with respect to $(C',i)$, we construct a palette $L''=L_{T,j}$ depending on $T$ and $j$.
		Define $L''$ by setting $L''(v)$ equal to the color of $v$ admits as in a type $(A,j)$ coloring for every $v\in T$ and leaving $L''(v)=L'(v)$ for every remaining vertex $v$. 
		The set of $\mathcal{L}_{2.1}$ will be the set of all updated palettes from $L''$ obtained in this way such that no vertex in $X_i$ has list length 3 in the updated palette.    
		So the number of palettes in $\mathcal{L}_{2.1}$ is $O(|V(G)|^{11})$, since $|T|\leq 8$. 
		Let $\mathcal{R}_{2.1}=(G,\mathcal{L}_{2.1},\emptyset)$. 
		Note that $(G,L',\emptyset)$ admits a type $(A,j)$ coloring with respect to $(C',i)$ if and only if $(G,\mathcal{L}_{2.1},\emptyset)$ is colorable. 
		The total time for constructing $\mathcal{R}_{2.1}$ thus amounts to $O(|V(G)|^{11}(|V(G)|+|E(G)|))$.

		\vspace{0.2cm}
		\noindent (3) Construction of $\mathcal{R}_{2.2}$. 
		\vspace{0.2cm}
		
		For every $T$ as claimed in a type $(B,j)$ coloring with respect to $(C',i)$, we construct a palette $L''=L_{T,j}$ depending on $T$ and $j$.
		Define $L''$ by setting $L''(v)$ equal to the color of $v$ admits as in a type $(B,j)$ coloring for every $v\in T$ and leaving $L''(v)=L'(v)$ for every remaining vertex $v$. 
		The set of $\mathcal{L}_{2.2}$ will be the set of all updated palettes from $L''$ obtained in this way such that there are no 2$P_3$ with respect to the set of vertices in $X_i$ that have list length 3.   
		So the number of palettes in $\mathcal{L}_{2.2}$ is $O(|V(G)|^9)$, since $|T|\leq 6$. 
		Let $\mathcal{R}_{2.2}=(G,\mathcal{L}_{2.2},\emptyset)$. 
		Note that $(G,L',\emptyset)$ admits a type $(B,j)$ coloring with respect to $(C',i)$ if and only if $(G,\mathcal{L}_{2.2},\emptyset)$ is colorable. 
		The total time for constructing $\mathcal{R}_{2.2}$ thus amounts to $O(|V(G)|^9(|V(G)|+|E(G)|))$.

		\vspace{0.2cm}
		\noindent (4) Construction of $\mathcal{R}'_{2.2}$. 
		\vspace{0.2cm}
		
		For every $L''\in \mathcal{L}_{2.2}$, we construct a palette $L'''=L_{x'}$, depending on $x'$, where $x'$ is a vertex in the set of vertices that have list length 3 in $L''$. 
		Define $L'''_1$ by setting $L'''_1(x')=3$ and leaving $L'''_1(v)=L''(v)$ for every remaining vertex $v$. 
		The set of $\mathcal{L}_{2.2.1}$ will be the set of all updated palettes from $L'''_1$ obtained in this way.
		Since there are no 2$P_3$ with respect to the set of vertices in $X_i$ that have list length 3 in $L''$, no vertex in $X_i$ has list length 3 in every element of $\mathcal{L}_{2.2.1}$. 
		Define $L'''_2$ by setting $L'''_2(x')=L''(x')\setminus \{3\}$ and leaving $L'''_2(v)=L''(v)$ for every remaining vertex $v$.
		The set of $\mathcal{L}_{2.2.2}$ will be the set of all updated palettes from $L'''_2$ obtained in this way. 
		Thus, no vertex in $X_i$ has list length 3 in every element of $\mathcal{L}_{2.2.2}$. 
		Set $\mathcal{R}'_{2.2}=(G,\mathcal{L}_{2.2.1}\cup \mathcal{L}_{2.2.2},\emptyset)$. 
		Then $\mathcal{R}_{2.2}$ is colorable if and only if $\mathcal{R}'_{2.2}$ is colorable. 
		The size of $\mathcal{R}'_{2.2}$ is $O(|V(G)|^{10})$ and the total time for constructing $\mathcal{R}'_{2.2}$ thus amounts to $O(|V(G)|^{10}(|V(G)|+|E(G)|))$.

		\vspace{0.2cm}
		\noindent (5) Construction of $\mathcal{R}$. 
		\vspace{0.2cm}
		
		Let $R_2=(G\setminus \bar{X}'_i,L',Z_2)$, where $\bar{X}'_i$ is the set of those vertices in $X_i$ with list length 3 in $L'$ and $Z_2=\bigcup_{x\in \bar{X}'_i}\{N(x)\cap (A_{i-2}\cup B_{i-1}),N(x)\cap (A_{i+2}\cup B_{i+1})\}$. 
		Note that any coloring of $R_2$ can be extended to a coloring of $(G,L',\emptyset)$ in $O(|V(G)|)$ time. 
		Let $\mathcal{R}'$ the union of $R_2$, $\mathcal{R}_{2.1}$ and $\mathcal{R}'_{2.2}$ we obtain for every $L'\in \mathcal{L}_1$. 
		By Lemma \ref{wonderful}, $(G,\mathcal{L}_1,\emptyset)$ is colorable if and only if $\mathcal{R}'$ is colorable. 
		Let $\mathcal{R}=R_1\cup \mathcal{R}'$. 
		So, $(G,L,\emptyset)$ is colorable if and only if $\mathcal{R}$ is colorable, for each element of $\mathcal{R}$, $|L(u)|\leq 2$ for every $u\in X_i$, since the size of $\mathcal{R}$ is  $O(|V(G)|^{12})$, the total time for constructing $\mathcal{R}$ thus amounts to $O(|V(G)|^{11}(|V(G)|+|E(G)|))$ and any coloring of $\mathcal{R}$ can be extended to a coloring of $(G,L,\emptyset)$ in $O(|V(G)|)$ time.  
		This completes the proof of Lemma \ref{one set}. 
	\end{proof}

	\section{Proof of Main Theorem}
	For any induced odd cycle $C$ of $G$ and $\{i,j,k\}=\{1,2,3\}$, we call $C$ admit a  
	\begin{itemize}
		\item type \uppercase\expandafter{\romannumeral1} coloring if $C$ is colored as $i-j-k-i-k-j-k-i$; 
		\item type \uppercase\expandafter{\romannumeral2} coloring if $C$ is colored as $i-j-k-j-k-i-k-i$; 
		\item type \uppercase\expandafter{\romannumeral3} coloring if $C$ is colored as $i-j-k-j-k-j-k-i$.
	\end{itemize}
	Now, we are ready to prove our main Theorem. 
	\begin{proof}[Proof of Theorem \ref{Main}]
		It is sufficient to prove for the case when $G$ is connected. 
		By Lemma \ref{comparable} and Lemma \ref{cleaned}, it spends in $O(|V(G)|^7(|V(G)|+|E(G)|))$ time to reduce $G$ to $G'$ with no comparable pair and cleaned if $G$ is not bipartite or determine that $G$ is bipartite. 
		If $G$ is bipartite, a 2-coloring of $G$ can be obtained in $O(|V(G)|+|E(G)|)$ time. 
		So, we may assume that $G$ is not bipartite.

		\begin{claim}\label{C7 type}
			Let $c$ be a 3-coloring of $G$. 
			Then every induced $C_7$ in $G$ admits either a type \uppercase\expandafter{\romannumeral1}, type \uppercase\expandafter{\romannumeral2} or type \uppercase\expandafter{\romannumeral3} coloring. 
		\end{claim}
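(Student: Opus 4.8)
The plan is to observe that this is purely a statement about proper $3$-colorings of a single $C_7$: the restriction of $c$ to the seven vertices of an induced $C_7$ is a proper $3$-coloring of that cycle, and saying the cycle ``is colored as'' one of the listed patterns is understood up to rotating and reflecting the cycle and up to relabeling the three colors. So, fixing an induced $C_7$, say $C = u_1 - u_2 - \cdots - u_7 - u_1$, I would first choose any bijection between the three colors and $\Z/3\Z$ (only a relabeling of colors) and write $c(u_\ell) \in \Z/3\Z$. Each edge $u_\ell u_{\ell+1}$ (indices mod $7$) then carries a step $s_\ell := c(u_{\ell+1}) - c(u_\ell)$, and properness gives $s_\ell \in \{+1,-1\}$; traversing the whole cycle yields $\sum_{\ell=1}^{7} s_\ell \equiv 0 \pmod 3$.

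Next I would set $p := |\{\ell : s_\ell = +1\}|$, so that $\sum_\ell s_\ell = p - (7-p) = 2p - 7 \equiv 0 \pmod 3$, forcing $p \equiv 2 \pmod 3$, i.e. $p \in \{2,5\}$. The color permutation $x \mapsto -x$ of $\Z/3\Z$ negates every step and so turns $p$ into $7-p$; applying it when $p = 5$, I may assume $p = 2$. The two indices with $s_\ell = +1$ now form a $2$-element subset of $\Z/7\Z$, and up to a cyclic rotation of $C$ there are exactly three such subsets, indexed by the cyclic distance $1$, $2$, or $3$ between the two indices; take them to be $\{1,2\}$, $\{1,3\}$, $\{1,4\}$.

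Finally I would just compute the sequence $c(u_1),\ldots,c(u_7)$ in each case from $c(u_1)=0$ by accumulating steps: this gives $0,1,2,1,0,2,1$ for $\{1,2\}$, $0,1,0,1,0,2,1$ for $\{1,3\}$, and $0,1,0,2,0,2,1$ for $\{1,4\}$. Each of these uses all three colors, as it must since $C_7$ is an odd cycle, and after one more rotation together with a relabeling of the two colors occurring twice one recognizes them, respectively, as the type \uppercase\expandafter{\romannumeral1} pattern $i-j-k-i-k-j-k-i$, the type \uppercase\expandafter{\romannumeral3} pattern $i-j-k-j-k-j-k-i$, and the type \uppercase\expandafter{\romannumeral2} pattern $i-j-k-j-k-i-k-i$. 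Since $p \in \{2,5\}$ is exhaustive, this covers every proper $3$-coloring of $C_7$, proving the claim. The argument has no real obstacle; the only delicate point is the bookkeeping at the end — confirming that all the normalizations used (the initial identification with $\Z/3\Z$, the flip $x\mapsto -x$, the cyclic rotation, and the last transposition of the two doubly-occurring colors) are genuinely absorbed by the rotation/reflection/relabeling freedom implicit in ``is colored as'', and that each of the three explicit sequences is paired with the correct type.
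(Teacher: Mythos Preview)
Your proof is correct. The step-sequence encoding is sound: properness forces each $s_\ell \in \{\pm 1\}$, the cyclic constraint $\sum_\ell s_\ell \equiv 0 \pmod 3$ yields $p \in \{2,5\}$, the color involution $x \mapsto -x$ (equivalently, a reflection of the cycle) reduces to $p=2$, and up to rotation the two $+1$-positions are determined by their cyclic distance $1$, $2$, or $3$. Your three explicit sequences do match types~I, III, and~II respectively; e.g.\ in your $\{1,2\}$ sequence both size-$2$ color classes sit at cyclic distance~$3$, which distinguishes type~I from type~II among the $(3,2,2)$ patterns, while in the $\{1,4\}$ sequence they sit at distance~$2$.

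The paper argues differently and more directly: it observes that some color $k$ is used exactly three times, notes that any $3$-element independent set in $C_7$ has gap pattern $(2,2,3)$ and so may be rotated to $\{v_3,v_5,v_7\}$, and then enumerates the proper $\{i,j\}$-colorings of the remaining vertices $v_1,v_2,v_4,v_6$ (constrained only by $v_1 \neq v_2$), reading off the three types. Your algebraic route via $\mathbb{Z}/3\mathbb{Z}$ is a little more systematic and makes exhaustiveness transparent through the congruence on $p$, at the cost of the final bookkeeping step you flag; the paper's positional enumeration is more ad hoc but needs no such matching. Both are short and elementary.
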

		\begin{proof}
			Let $C_7:=v_1-v_2-v_3-v_4-v_5-v_6-v_7-v_1$ and $\{i,j,k\}=\{1,2,3\}$. 
			Since $\chi(C_7)=3$, there is one element in $\{i,j,k\}$, say $k$, such that there are exactly three vertices in $C_7$ colored $k$. 
			Without loss of generality, we may assume that $c(v_3)=c(v_5)=c(v_7)=k$. 
			On the other hand, either there are exactly one vertex in $C_7$ colored $i$ and exactly three vertices in $C_7$ colored $j$ or there are exactly two vertices in $C_7$ colored $i$ and exactly two vertices in $C_7$ colored $j$. 
			If the former case happens, then we may assume that $c(v_1)=i$ and $c(v_2)=c(v_4)=c(v_6)=j$, and hence $C_7$ admits a type \uppercase\expandafter{\romannumeral3} coloring. 
			So we may assume that the latter case happens. 
			Then by symmetry, we may assume that $c(v_1)=i$ and $c(v_2)=j$. 
			It follows that either $c(v_4)=i$ and $c(v_6)=j$ or $c(v_4)=j$ and $c(v_6)=i$. 
			Then $C_7$ admits a type \uppercase\expandafter{\romannumeral1} or \uppercase\expandafter{\romannumeral2} coloring. 
			This completes the proof of Claim \ref{C7 type}. 
		\end{proof}

		For every induced odd cycle $C$, we construct a set $\mathcal{L}_1^C$ of all palettes that give $C$ a type \uppercase\expandafter{\romannumeral1} coloring and update. 
		The set $\mathcal{L}_1$  will be the set of all palettes we obtained this way. 
		We define $\mathcal{L}_2$, similarly, for all type \uppercase\expandafter{\romannumeral2} colorings. 
		
		\vspace{0.2cm}
		\noindent(1) Construct a set $\mathcal{R}_1$ of restrictions of $(G',\mathcal{L}_1,\emptyset)$ such that $(G',\mathcal{L}_1,\emptyset)$ is colorable if and only if $\mathcal{R}_1$ is colorable. 
		\vspace{0.2cm}
		
			Let $C:=v_1-v_2-v_3-v_4-v_5-v_6-v_7-v_1$ be an induced odd cycle of $G'$ and $L_1=L_{C}$, depending on $C$, be an element of $\mathcal{L}_1$. 
			Without loss of generality, we may assume that $L_1(v_1)=L_1(v_4)=\{1\}$, $L_1(v_2)=L_1(v_6)=\{2\}$, $L_1(v_3)=L_1(v_5)=L_1(v_7)=\{3\}$. 
			Let $\bar{X}_i$ be the set of those vertices in $X_{i}$ that have color list length 3 and are not reducible. 
			Then $\bar{X}_4\cup \bar{X}_5\cup \bar{X}_6=\emptyset$.
			So, we focus on the sets $\bar{X}_7,\bar{X}_1,\bar{X}_2$ and $\bar{X}_3$. 
			Note that by the colors of vertices in $C$, $\bar{X}_1$ and $\bar{X}_2$ are symmetrical, $\bar{X}_7$ and $\bar{X}_3$ are symmetrical and all vertices in $\bar{X}_1$ have no neighbors in $B_7\cup B_2$, all vertices in $\bar{X}_2$ have no neighbors in $B_1\cup B_3$, all vertices in $\bar{X}_3$ have no neighbors in $B_2$. 
			
			Suppose first that $\bar{X}_1\cup \bar{X}_2\neq \emptyset$. 
			By symmetry, we may assume that $\bar{X}_1\neq \emptyset$. 
			Let $x_1\in \bar{X}_1$ with neighbors $a_6$ in $A_6$ and $a_3$ in $A_3$. 
			If $\bar{X}_2\neq \emptyset$, let $x_2\in \bar{X}_2$ with neighbors $a_7$ in $A_7$ and $a_4$ in $A_4$, then $x_1-a_6-v_6-v_7-v_1-v_2-v_3-v_4-a_4-x_2$ is an induced $P_{10}$ in $G$. 
			If $\bar{X}_3\neq \emptyset$, let $x_3\in \bar{X}_3$ with neighbors $a_1$ in $A_1$, then $x_2-a_1-v_1-v_2-v_3-a_3-x_1-a_6-v_6-v_5$ is an induced $P_{10}$ in $G$. 
			It follows that $\bar{X}_2\cup \bar{X}_3=\emptyset$. 
			If $\bar{X}_7=\emptyset$, by Lemma \ref{one set}, we obtain a set $\mathcal{R}_1'$ of  restrictions of $(G',\mathcal{L}_1,\emptyset)$. 
			Then it spends us in $O(|V(G)|)$ time to reduce every element of $\mathcal{R}_1'$ to obtain $\mathcal{R}_1''$ such that every palette in every element of $\mathcal{R}_1'$ is non-reducible. 
			By applying Lemma \ref{Y}, we obtain a set $\mathcal{R}_1^1$ of restrictions of $\mathcal{R}_1''$ such that every vertex in every element of  $\mathcal{R}_1^1$ has list length at most 2. 
			Thus, $\mathcal{R}_1^1$ has size $O(|V(G)|^{18})$ and the total time for constructing $\mathcal{R}_1^1$ amounts to $O(|V(G)|^{18}(|V(G)|+|E(G)|))$.
			So, we may assume that $\bar{X}_7\neq \emptyset$. 
			
			Note that if $(G',L_1,\emptyset)$ is colorable, then either there is a coloring $c$ of $(G',L_1,\emptyset)$ such that there is a vertex $x\in \bar{X}_1$ with two neighbors $a_{6}\in A_{6}\cup B_7$ and $a_{3}\in A_3\cup B_2$ such that $c(a_6)=c(a_3)=1$ or for every coloring $c$ of $(G',L_1,\emptyset)$, every vertex in $\bar{X}_1$ satisfies mono condition with respect to $(C,c)$. 
			
			For every $a_{6}\in A_{6}\cup B_7$ and $a_{3}\in A_3\cup B_2$ such that $a_6$ and $a_3$ have a common neighbor $x$ in $\bar{X}_1$, we construct a palette $L'=L_{a_6,a_3}$, depending on $a_6$ and $a_{3}$. 
			Define $L'$ by setting $L'(a_6)=L'(a_{3})=\{1\}$ and leaving $L'(v)=L_1(v)$ for every $v\in V(G')\setminus \{x,a_{i-2},a_{i+2}\}$. 
			Let $L''$ be the palette updated from $L'$. 
			Note that $C':=x-a_3-v_3-v_4-v_5-v_6-a_6-x$ is also an induced odd cycle. 
			Then for every vertex $x'$ in $\bar{X}_7$ that is not in a non-trivial component of $G'\setminus N[C']$, at least one of its neighbors is adjacent to $a_3$ or $a_6$. 
			Since $L''$ is updated, $|L''(x')|\leq 2$. 
			It follows that one can obtain $\mathcal{R}_1^{2,1}$ by applying Lemma \ref{one set}(1) and Lemma \ref{one set} for $i=1$ such that every non-reducible vertex for $C$ in $X$ that is not in a non-trivial component of $G'\setminus N[C']$ has list length at most 2 in any element of $\mathcal{R}_1^{2,1}$. 
			By reducing every element of $\mathcal{R}_1^{2,1}$ such that every palette in every element of $\mathcal{R}_1^{2,1}$ is non-reducible and Lemma \ref{Y} for $C$ and $C'$, one can obtain a set $\mathcal{R}_1^{2,1,1}$ of restrictions of $(G',\mathcal{L}_1,\emptyset)$. 
			Then we apply Lemma \ref{one set} for $i=7$ to obtain a set $\mathcal{R}_1^{2,2}$ of restrictions of $(G',L_1,\emptyset)$. 
			For every element $(G',L',Z')$ in $\mathcal{R}_1^{2,2}$, let $(G'\setminus \bar{X}_1, L',Z'\cup Z)$ be a restriction of $(G',L',Z')$,  where $Z=\bigcup_{x\in \bar{X}_1}\{N(x)\cap (A_{6}\cup B_{7}),N(x)\cap (A_{3}\cup B_{2})\}$. 
			The set of $\mathcal{R}_{1}^{2,2,1}$ will be the set of all restrictions we obtain from this way. 
			Then by reducing every element of $\mathcal{R}_1^{2,2,1}$ such that every palette in every element of $\mathcal{R}_1^{2,1}$ is non-reducible and Lemma \ref{Y} for $C$, one can obtain a set $\mathcal{R}_1^{2,2,2}$ of restrictions of $(G',\mathcal{L}_1,\emptyset)$.
			Let $\mathcal{R}_1^{2}=\mathcal{R}_1^{2,1,1}\cup \mathcal{R}_1^{2,2,2}$. 
			It follows that every vertex in every element of $\mathcal{R}_1^2$ has list length at most 2. 
			Moreover, $\mathcal{R}_1^2$ has size $O(|V(G)|^{21})$ and the total time for constructing $\mathcal{R}_1^2$ amounts to $O(|V(G)|^{21}(|V(G)|+|E(G)|))$.

			Suppose then $\bar{X}_1\cup \bar{X}_2= \emptyset$. 
			If $\bar{X}_3= \emptyset$ or $\bar{X}_7= \emptyset$, then similarly, we obtain a set $\mathcal{R}_1^3$ of restrictions of $(G',\mathcal{L}_1,\emptyset)$ with size $O(|V(G)|^{18})$ in $O(|V(G)|^{11}(|V(G)|+|E(G)|))$ time such that every vertex in every element of  $\mathcal{R}_1^3$ has list length at most 2.  
			So, we then assume that both $\bar{X}_3$ and $\bar{X}_7$ are not empty. 
			Note that if $(G',L_1,\emptyset)$ is colorable, then either there is a vertex $x$ in $\bar{X}_3\cup \bar{X}_7$, say $\bar{X}_3$, has a neighbor $a_1$ such that $c(a_1)=3$ or every neighbors in $A_1$ of every vertex in $\bar{X}_3$ admits color 2 and every neighbors in $A_2$ of every vertex in $\bar{X}_7$ admits color 1. 
			
			Assume first that there is a vertex $x$ in $\bar{X}_3\cup \bar{X}_7$, say $\bar{X}_3$, has a neighbor $a_1$ such that $c(a_1)=3$. 
			Let $a_5$ be a neighbor of $x$ in $A_{5}\cup B_{4}$.  
			Then $C':=x-a_5-v_5-v_6-v_7-v_1-a_1-x$ is an induced odd cycle. 
			It follows that at least one neighbor $a_2\in A_2$ of every vertex $x'\in \bar{X}_7$ that is not in a non-trivial component of $G'\setminus N[C']$ is adjacent to $a_1$. 
			Thus, every vertex in $\bar{X}_7$ does not admit color 1 in $c$. 
			So, we may apply Lemma \ref{one set} for $i=3$ to obtain a set $\mathcal{R}_1^{4,1}$ of restrictions of $(G',\mathcal{L}_1,\emptyset)$. 
			For every element $(G',L',Z')$ of $\mathcal{R}_{1}^{4,1}$, define $(G',L'',Z')$ where $L''(v)=L'(v)\setminus \{1\}$ for every $v\in \bar{X}_7$ such that $v$ is not in a non-trivial component of $G'\setminus N[C']$ and $L''(v)=L'(v)$ for every remaining vertex $v$. 
			The set $\mathcal{R}_1^{4,1,1}$ will be the set of all restrictions we obtain from this way. 
			
			Let $R_1=(G',L',\emptyset)$ be the restriction of $(G',L_1,\emptyset)$, where $L'(v)=L_1(v)\setminus \{2\}$ for every $v\in \bar{X}_3$,$L'(v)=L_1(v)\setminus \{1\}$ for every $v\in \bar{X}_7$ and $L'(v)=L_1(v)$ for every remaining vertex $v$. 
			Let $\mathcal{R}_1^{4,2}=R_1\cup \mathcal{R}_1^{4,1,1}$. 
			Then by reducing every element of $\mathcal{R}_1^{4,2}$ such that every palette in every element of $\mathcal{R}_1^{4,2}$ is non-reducible and Lemma \ref{Y} for $C$ and $C'$, one can obtain a set $\mathcal{R}_1^{4}$ of restrictions of $(G',\mathcal{L}_1,\emptyset)$ such that every vertex in every element of $\mathcal{R}_1^4$ has list length at most 2. 
			Moreover, similar with $\mathcal{R}_1^1$, $\mathcal{R}_1^4$ has size $O(|V(G)|^{18})$ and the total time for constructing $\mathcal{R}_1^4$ amounts to $O(|V(G)|^{18}(|V(G)|+|E(G)|))$.

			Let $\mathcal{R}_1=\mathcal{R}_1^1\cup \mathcal{R}_1^2\cup \mathcal{R}_1^3\cup \mathcal{R}_1^4$. 
			Then $(G,\mathcal{L}_1,\emptyset)$ is colorable if and only if $\mathcal{R}_1$ is colorable, for each element of $\mathcal{R}_1$, $|L(v)|\leq 2$ for every $v\in X$ and $\mathcal{R}_1$ has size $O(|V(G)|^{21})$ and the total time for constructing $\mathcal{R}_1$ amounts to $O(|V(G)|^{21}(|V(G)|+|E(G)|))$. 
			This finishes the work of (1).
			
			\vspace{0.2cm}
			
			\noindent(2) Construct a set $\mathcal{R}_2$ of restrictions of $(G',\mathcal{L}_2,\emptyset)$ such that $(G',\mathcal{L}_2,\emptyset)$ is colorable if and only if $\mathcal{R}_1\cup \mathcal{R}_2$ is colorable. 
		    \vspace{0.2cm}
		    
		    By (1), it follows that for any coloring of $(G',\mathcal{L}_2,\emptyset)$ that is not a coloring of $(G',\mathcal{L}_1,\emptyset)$, there are no induced odd cycle admitting a type $\uppercase\expandafter{\romannumeral1}$ coloring.  
		    Let $C:=v_1-v_2-v_3-v_4-v_5-v_6-v_7-v_1$ be an induced odd cycle of $G'$ and $L_2=L_{C}$, depending on $C$, be an element of $\mathcal{L}_2$. 
		    Without loss of generality, we may assume that $L_2(v_1)=L_2(v_6)=\{1\}$, $L_2(v_2)=L_2(v_4)=\{2\}$ and $L_{2}(v_3)=L
		    _2(v_5)=L_2(v_7)=\{3\}$. 
		    Let $\bar{X}_i$ be the set of those vertices in $X_{i}$ that have color list length 3 and are not reducible. 
		    Then $\bar{X}_5=\emptyset$. 
		    Since no induced odd cycle admits a type $\uppercase\expandafter{\romannumeral1}$ coloring, for any coloring $c$ of $(G',\mathcal{L}_2,\emptyset)$ that is not a coloring of $(G',\mathcal{L}_1,\emptyset)$, $c(v)=2$ for every $v\in N(X_1)\cap (A_3\cup B_2)$, $c(v)=1$ for every $v\in N(X_2)\cap (A_7\cup B_1)$ and $c(v)=3$ for every $v\in (N(X_4)\cap (A_6\cup B_5))\cup (N(X_6)\cap (A_4\cup B_5))$. 
		    So, we only need to focus on $\bar{X}_3$ and $\bar{X}_7$. 
		    Note that in the construction in (1) when $\bar{X}_1=\bar{X}_2=\emptyset$, the lists of $v_4$ and $v_6$ are not used. 
		    So, by symmetry, we could obtain a set $\mathcal{R}_2^{1}$ of restrictions of $(G',\mathcal{L}_2,\emptyset)$ such that for every element $(G'',L',Z')$ of $\mathcal{R}_2^{1}$ and for every vertex $x\in \bar{X}_3\cup \bar{X}_7$ that is not in a non-trivial component of $G''\setminus N[C']$, where $C'$ is the corresponding induced odd cycle of $x$, $|L'(x)|\leq 2$. 
		    Then define $L''$ from $L'$ by setting $L''(v)=\{2\}$ for every $v\in N(X_1)\cap (A_3\cup B_2)$, $L''(v)=\{1\}$ for every $v\in N(X_2)\cap (A_7\cup B_1)$, $L''(v)=\{3\}$ for every $v\in (N(X_4)\cap (A_6\cup B_5))\cup (N(X_6)\cap (A_4\cup B_5))$ and leaving $L''(v)=L'(v)$ for every remaining vertex $v$. 
		    Let $L'''$ be the updated palette from $L''$. 
		    The set $\mathcal{R}_2^{1,1}$ will be the union of all restrictions $(G'',L''',Z')$ we obtained in this way. 
		    Then by reducing every element of $\mathcal{R}_2^{1,1}$ such that every palette in every element of $\mathcal{R}_2^{1,1}$ is non-reducible and Lemma \ref{Y} for $C$ and $C'$, one can obtain a set $\mathcal{R}_2$ of restrictions of $(G',\mathcal{L}_2,\emptyset)$ such that every vertex in every element of $\mathcal{R}_2$ has list length at most 2. 
		    Moreover, $(G',\mathcal{L}_2,\emptyset)$ is colorable if and only if $\mathcal{R}_1\cup \mathcal{R}_2$ is colorable, $\mathcal{R}_2$ has size $O(|V(G)|^{18})$ and the total time for constructing $\mathcal{R}_2$ amounts to $O(|V(G)|^{18}(|V(G)|+|E(G)|))$. 
		    This finishes the work of (2).
		    
		    \vspace{0.2cm}
		    
		    \noindent(3) Construct a set $\mathcal{R}_3$ of restrictions of $(G',L_0,\emptyset)$ such that $(G',L_0,\emptyset)$ is colorable if and only if $\mathcal{R}_1\cup \mathcal{R}_2\cup \mathcal{R}_3$ is colorable. 
		    \vspace{0.2cm}  
		    
		    Let $C:=v_1-v_2-v_3-v_4-v_5-v_6-v_7-v_1$ be an induced odd cycle of $G'$. 
		    For every possible type \uppercase\expandafter{\romannumeral3} coloring $c$ of $C$, define $L=L_{c}$, depending on $c$, by setting $L(v)=\{c(v)\}$ for every $v\in V(C)$ and $L(v)=\{1,2,3\}$ for every $v\in G'\setminus V(C)$. 
		    Let $L'$ be the updated palette from $L$. 
		    The set $\mathcal{R}_3^{1}$ will be the set of all restrictions $(G',L',\emptyset)$ we obtained in this way. 
		    Thus, $\mathcal{R}_3^1$ has size $O(1)$ and can be constructed in $O(1)$ time.     
		    
		    Let $(G',L',\emptyset)$ be an element of $\mathcal{R}_3^1$. 
		    Without loss of generality, we may assume that $L'(v_1)=\{1\}$, $L'(v_2)=L'(v_4)=L'(v_6)=\{2\}$ and $L'(v_3)=L'(v_5)=L'(v_7)=\{3\}$. 
		    By (1) and (2), it follows that for any coloring $c$ of $(G',\mathcal{L}_2,\emptyset)$ that is not a coloring of $(G',\mathcal{L}_1\cup \mathcal{L}_2,\emptyset)$, $c(v)=2$ for every $v\in N(X_3)\cap (A_5\cup B_4)$, $c(v)=3$ for every $v\in N(X_6)\cap (A_4\cup B_5)$ and for every $v\in X_1\cup X_2\cup X_7$, $x$ satisfies mono condition with respect to $c$. 
		    Moreover, since $L'(v_2)=L'(v_6)=2$ and $L'(v_3)=L'(v_4)=3$, every vertex $v\in X_4\cup X_5$ is reducible. 
		    Then let 
		    \begin{align}
		    	Z= & (\bigcup_{x\in X_1}\{N(x)\cap (A_6\cup B_7),N(x)\cap (A_3\cup B_2)\})\nonumber \\ \cup &(\bigcup_{x\in X_2}\{N(x)\cap (A_7\cup B_1),N(x)\cap (A_4\cup B_3)\})\nonumber \\\cup  &(\bigcup_{x\in X_7}\{N(x)\cap (A_5\cup B_6),N(x)\cap (A_2\cup B_1)\}),
		    	\nonumber
		    \end{align}
		    $L''(v)=\{2\}$ for every $v\in N(X_3)\cap (A_5\cup B_4)$, $L''(v)=\{3\}$ for every $v\in N(X_6)\cap (A_4\cup B_5)$ and leaving $L''(v)=L'(v)$ for every remaining vertex $v$. 
		    Define $R_{L'}=(G)$ by setting $(G'\setminus (X_1\cup X_2\cup X_7),L''',Z)$, where $L'''$ is the updated palette from $L''$. 
		    The set $\mathcal{R}_3^1$ be the set of all restrictions $R_{L'}$ we obtained in this way. 
		    Then by reducing every element of $\mathcal{R}_3^{1}$ such that every palette in every element of $\mathcal{R}_3^{1}$ is non-reducible and Lemma \ref{Y} for $C$, one can obtain a set $\mathcal{R}_3$ of restrictions of $(G',L_0,\emptyset)$ such that every vertex in every element of $\mathcal{R}_3$ has list length at most 2. 
		    Moreover, $(G',L_0,\emptyset)$ is colorable if and only if $\mathcal{R}_1\cup \mathcal{R}_2\cup \mathcal{R}_3$ is colorable, $\mathcal{R}_3$ has size $O(1)$ and the total time for constructing $\mathcal{R}_3$ amounts to $O(|V(G)|+|E(G)|)$.
		    This finishes the work of (3). 
		    
		    \vspace{0.2cm}
		    
		    Let $\mathcal{R}=\mathcal{R}_1\cup \mathcal{R}_2\cup \mathcal{R}_3$. 
		    It follows that $\mathcal{R}$ has size $O(|V(G)|^{21})$, the total time for constructing $\mathcal{R}$ amounts to $O(|V(G)|^{21}(|V(G)|+|E(G)|))$, $G$ is 3-colorable if and only if $\mathcal{R}$ is colorable and every vertex $v$ in every element of $\mathcal{R}$ has list length at most 2. 
		    Then by using Lemma \ref{improve 2sat}, we can check in $O(|V(G)|+|E(G)|)$ time whether an element of $\mathcal{R}$ is colorable and find one if it exists. And a coloring of $\mathcal{R}$ can be extended to a coloring of $G$ in $O(|V(G)|+|E(G)|)$ time by Lemmas~\ref{cleaned}, \ref{Y} and \ref{one set}.
		    Since $|\mathcal{R}|$ is $O(|V(G)|^{21})$, and $\mathcal{R}$ can be computed in $O(|V(G)|^{21}(|V(G)|+|E(G)|))$ time, the total running time amounts to $O(|V(G)|^{21}(|V(G)|+|E(G)|))$ time. 
		    This completes the proof.
	\end{proof}
%
%
%
%
%
%
%
	
	\section{Conclusion}
	In this paper, we give a polynomial-time algorithm that solves the 3-coloring problem for the family of $P_{10}$-free graphs whose induced odd cycles are all of length $7$ (we denote this family by $\mathcal{G}_{10,7}$). 
	On the one hand, we make progress towards determining the complexity of 3-coloring in $P_t$-free graphs, which is still open for $t\geq 8$. 
	On the other hand, we explored the class of graphs that have only one prescribed induced odd cycle length, which is a  generalization of bipartite graphs. It would be interesting to determine the complexity of 3-coloring in $\mathcal{G}_{t,7}$ for other integer $t\geq 10$ in this way. 
	Note that in the proof of this paper, we use the properties that $G$ has no comparable pair many times and change the colors of some vertices to find a new coloring in Section 5. 
	Since these properties may not hold, our proof fails for solving the list 3-coloring problem. 
	It would also be interesting to determine the complexity of list 3-coloring in $\mathcal{G}_{10,7}$.

	\section*{Acknowledgement}
	
	\section*{Declaration}
	
	\noindent$\textbf{Conflict~of~interest}$
	The authors declare that they have no known competing financial interests or personal relationships that could have appeared to influence the work reported in this paper.
	
	\noindent$\textbf{Data~availability}$
	Data sharing not applicable to this paper as no datasets were generated or analysed during the current study.

\end{document}